\newtheorem{thm}{Theorem}[section]
\newtheorem{lem}[thm]{Lemma}
\newtheorem{constr}[thm]{Construction}
\newtheorem{conj}[thm]{Conjecture}
\newtheorem{defi}[thm]{Definition}
\newcommand*{\1}{1}
\newcommand*{\2}{2}
\newcommand*{\3}{3}
\newcommand*{\4}{4}
\newcommand*{\5}{5}
\newcommand*{\6}{6}
\DeclareMathOperator{\RS}{RSz}
\DeclareMathOperator{\Di}{Disj}
\DeclareMathOperator{\di}{disj}
\newcommand{\Bin}[1]{\text{Bin}\left[ #1 \right]}
\begin{document}

\title{Diameter critical graphs}
\author{
Po-Shen Loh \thanks{Department of Mathematical Sciences, Carnegie Mellon
University, Pittsburgh, PA 15213. E-mail: ploh@cmu.edu. Research supported
in part by NSF grant DMS-1201380 and by a USA-Israel BSF Grant.}
\and
Jie Ma\thanks{Department of Mathematical Sciences, Carnegie Mellon
University, Pittsburgh, PA 15213. Email: jiemath@andrew.cmu.edu.}
}

\date{}

\maketitle

\begin{abstract}
  A graph is called diameter-$k$-critical if its diameter is $k$, and the
  removal of any edge strictly increases the diameter.  In this paper, we
  prove several results related to a conjecture often attributed to Murty
  and Simon, regarding the maximum number of edges that any
  diameter-$k$-critical graph can have.  In particular, we disprove a
  longstanding conjecture of Caccetta and H\"aggkvist (that in every
  diameter-2-critical graph, the average edge-degree is at most the number
  of vertices), which promised to completely solve the extremal problem for
  diameter-2-critical graphs.
  
  On the other hand, we prove that the same claim holds for all higher
  diameters, and is asymptotically tight, resolving the average edge-degree
  question in all cases except diameter-2.  We also apply our techniques to
  prove several bounds for the original extremal question, including the
  correct asymptotic bound for diameter-$k$-critical graphs, and an upper
  bound of $(\frac{1}{6} + o(1))n^2$ for the number of edges in a
  diameter-3-critical graph.
\end{abstract}

\section{Introduction}

An {\em $(x,y)$-path} is a path with endpoints $x$ and $y$, and its {\em
length}\/ is its number of edges.  We denote by $d_G(x,y)$ the smallest
length of an $(x,y)$-path in a graph $G$, where we often drop the subscript
if the graph $G$ is clear from context.  The {\em diameter} of $G$ is the
maximum of $d_G(x,y)$ over all pairs $\{x,y\}$.  A graph $G$ is said to be
\emph{diameter-critical} if for every edge $e \in G$, the deletion of $e$
produces a graph $G - e$ with higher diameter.

The area of diameter-criticality is one of the oldest subjects of study in
extremal graph theory, starting from papers of Erd\H{o}s-R\'enyi \cite{ER},
Erd\H{o}s-R\'enyi-S\'os \cite{ERS}, Murty-Vijayan \cite{MV}, Murty
\cite{Mu1, Mu2, Mu3}, and Ore \cite{Ore} from the 1960's.  Many problems in
this domain were investigated, such as that of minimizing the number of
edges subject to diameter and maximum-degree conditions (see, e.g.,
Erd\H{o}s-R\'enyi \cite{ER}, Bollob\'as \cite{Bo1, Bo2},
Bollob\'as-Eldridge \cite{BEl}, Bollob\'as-Erd\H{o}s \cite{BEr}),
controlling post-deletion diameter (Chung \cite{Ch}), and
vertex-criticality (Caccetta \cite{Ca}, Erd\H{o}s-Howorka \cite{EH},
Huang-Yeo \cite{HY}, Chen-F\"uredi \cite{CF}), to name just a few.

The natural extremal problem of maximizing the number of edges  (or
equivalently, the average degree) in a diameter-critical graph also
received substantial attention.  Our work is inspired by the following
long-standing conjecture of Ore \cite{Ore}, Plesn\'ik \cite{Pl}, Murty and
Simon (see in \cite{CH}).  A \emph{diameter-$k$-critical} graph is a
diameter-critical graph of diameter $k$.

\begin{conj}
  \label{conj:murty-simon}
  For each $n$, the unique diameter-2-critical graph which maximizes the
  number of edges is the complete bipartite graph $K_{\lfloor n/2 \rfloor,
  \lceil n/2 \rceil}$.
\end{conj}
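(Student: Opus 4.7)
My plan is to first unpack diameter-$2$-criticality into a clean local structure statement. For each edge $uv\in E(G)$, the deletion of $uv$ must push some pair of vertices to distance at least $3$. A short case analysis of which length-$\leq 2$ path could possibly be destroyed shows that either (i) $N(u)\cap N(v)=\emptyset$, so $uv$ lies in no triangle and the pair $\{u,v\}$ itself becomes the broken distance-$2$ witness, or (ii) there is a vertex $y\notin N[u]$ with $N(u)\cap N(y)=\{v\}$, i.e.\ the only length-$2$ path from $u$ to $y$ runs through $v$ (or the symmetric statement with the roles of $u$ and $v$ swapped). Coupled with the diameter-$2$ hypothesis that every non-edge has codegree at least $1$, this sets up a tight correspondence between triangle-carrying edges of $G$ and codegree-$1$ non-edges.

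Next I would split $E(G)$ into the set $S$ of edges in no triangle and the set $T$ of edges in at least one triangle. The subgraph induced by $S$ is triangle-free by construction, so Mantel's theorem gives $|S|\leq \lfloor n^2/4\rfloor$, with equality forcing $S$ to be exactly $K_{\lfloor n/2\rfloor,\lceil n/2\rceil}$. For $T$, the local analysis assigns (after breaking ties canonically) a witness codegree-$1$ non-edge to each edge, and any such non-edge $\{a,b\}$ with unique common neighbor $c$ can be the witness of at most the two edges $ac$ and $bc$ on its unique length-$2$ bridge. Hence $|T|\leq 2N_1$, where $N_1$ is the number of codegree-$1$ non-edges, and trivially $N_1\leq \binom{n}{2}-|E(G)|$. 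The hope is that these two bounds combine to give $|E(G)|\leq \lfloor n^2/4\rfloor$, with equality forcing $T=\emptyset$ and $S$ to be in the extremal Mantel configuration.

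The main obstacle — and the reason this has remained an open conjecture since the 1960s — is that the two bounds do not combine cleanly: plugging $|E(G)|=|S|+|T|$ into them yields only $|E(G)|\lesssim 5n^2/12$, well short of the conjectured $n^2/4$. To close the gap I would attempt a stability-type dichotomy. If $|S|$ is close to $n^2/4$, Mantel stability forces $(V,S)$ to be nearly a balanced complete bipartite graph, and I would argue that no additional triangle-carrying edge of $T$ can be inserted without violating either diameter-$2$ or criticality. If $|S|$ is substantially smaller, then $T$ must carry most of the edges, and the codegree-$1$ witness map then generates so many tight local constraints that the neighborhoods of $G$ must themselves look globally bipartite, at which point a second Mantel-type count recovers the bound. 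Interpolating between these regimes — perhaps through a single weighted counting in which each edge pays a cost proportional to the number of distinct criticality witnesses it serves — is where I expect the genuine difficulty to lie; uniqueness of $K_{\lfloor n/2\rfloor,\lceil n/2\rceil}$ as the extremal graph would then follow from Mantel's uniqueness in whichever branch of the dichotomy saturates.
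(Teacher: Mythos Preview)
The statement you are trying to prove is Conjecture~\ref{conj:murty-simon}, which the paper does \emph{not} prove; it is presented as a long-standing open problem. The paper mentions that F\"uredi established the exact bound for sufficiently large $n$ via the Ruzsa--Szemer\'edi $(6,3)$ theorem, but the full conjecture (for all $n$, with uniqueness) remains open. So there is no ``paper's own proof'' to compare against.

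Your proposal is not a proof, and to your credit you say so: the decomposition $E(G)=S\cup T$ with $|S|\le\lfloor n^2/4\rfloor$ by Mantel and $|T|\le 2N_1\le 2\big(\binom{n}{2}-|E(G)|\big)$ only yields $|E(G)|\lesssim 5n^2/12$, as you compute. The remaining ``stability-type dichotomy'' is a wish, not an argument. In particular, the branch where $|S|$ is far from $n^2/4$ and $T$ dominates is exactly where the problem is hard: codegree-$1$ witnesses do not by themselves force global bipartiteness, and no ``weighted counting'' of the kind you gesture at has been found in over four decades of work on this problem. F\"uredi's argument for large $n$ does not proceed via any such dichotomy; it uses the Ruzsa--Szemer\'edi theorem to control the number of edges whose criticality witnesses are scarce, a fundamentally different (and Regularity-dependent) mechanism that your outline does not touch. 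Absent a concrete replacement for that step, the proposal has a genuine gap at precisely the point where the known difficulty lies.
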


Successively stronger estimates were proved in the 1980's by Plesn\'ik
\cite{Pl}, Cacetta-H\"aggkvist \cite{CH}, and Fan \cite{Fan}, culminating
in a breakthrough by F\"uredi \cite{Fu}, who used a clever application of
the Ruzsa-Szemer\'edi (6,3) theorem to prove the exact (non-asymptotic)
result for large $n$.  As current quantitative bounds on the (6,3) theorem
are of tower-type, the constraint on $n$ is quite intense, and there is
interest in finding an approach which is free of Regularity-type
ingredients.  For example, the recent survey by Haynes, Henning, van der
Merwe, and Yeo \cite{HHMY-survey} discusses recent work following a
different approach based upon total domination, but we do not pursue that
direction in this paper.

One hope for a Regularity-free method was proposed at around the origin of
the early investigation.  In their original 1979 paper, Caccetta and
H\"aggkvist posed a very elegant stronger conjecture for a related problem,
which would establish the extremal number of edges in diameter-2-critical
graphs for all $n$.  For an edge $e$, let its \emph{edge-degree} $d(e)$ be
the sum of the degrees of its endpoints, and let $\overline{d(e)}$ be the
average edge-degree over all edges, so that in terms of the total number of
edges $m$,
\begin{displaymath}
  \overline{d(e)} 
  = 
  \frac{1}{m} \sum_{uv \in E(G)} (d_u + d_v)
  =
  \frac{1}{m} \sum_{v \in V(G)} d_v^2 .
\end{displaymath}
Just as the Ore-Plesn\'ik-Murty-Simon problem sought to maximize the
average vertex-degree over diameter-critical graphs, one can also ask to
maximize the average edge-degree.

\begin{conj}
  \label{conj:CHd2}
  (Caccetta-H\"aggkvist \cite{CH}; also see \cite{Fu}) For any
  diameter-2-critical graph, the average edge-degree is at most the number
  of vertices.
\end{conj}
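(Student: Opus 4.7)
The plan is to reformulate Conjecture~\ref{conj:CHd2} as a single combinatorial inequality relating triangles and non-neighborhoods, and then attack that inequality using the witness-pair characterization of diameter-$2$-critical graphs.

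First, the reformulation. For any edge $uv$, the identity
\[
(d_u + d_v) - n \;=\; |N(u) \cap N(v)| \;-\; \bigl(n - |N(u) \cup N(v)|\bigr)
\]
holds, and summing over edges while swapping the order of summation recasts $\sum_v d_v^2 \le mn$ as
\[
3\,T(G) \;\le\; \sum_{w \in V(G)} e\bigl(G[\overline{N}(w)]\bigr), \qquad (\star)
\]
where $T(G)$ is the number of triangles of $G$ and $\overline{N}(w) = V(G) \setminus (N(w) \cup \{w\})$ is the strict non-neighborhood of $w$. Both sides of $(\star)$ vanish on the conjectured extremal graph $K_{\lfloor n/2 \rfloor, \lceil n/2 \rceil}$ and on several other small diameter-$2$-critical examples, a reassuring sanity check that $(\star)$ is the right target.

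Next, I would feed in diameter-$2$-criticality through witness pairs. For each edge $uv$, the fact that deleting $uv$ strictly increases the diameter produces a pair whose distance jumps above $2$. A short case analysis shows that either (A) the pair is $(u,v)$ itself, which forces $N(u) \cap N(v) = \emptyset$ so that $uv$ lies in no triangle, or (B) the pair has the form $(u,b)$ (after relabeling) with $b \in N(v) \setminus N(u)$, $ub \notin E(G)$, and $N(u) \cap N(b) = \{v\}$. For an edge $xy$ of a triangle $xyz$, case (A) is impossible since $z$ is a common neighbor of $x$ and $y$, so the witness takes the form $(x,b_{xy})$ from (B); moreover $b_{xy} \notin N(z)$, because otherwise $z$ would be a second common neighbor of $x$ and $b_{xy}$, contradicting $N(x) \cap N(b_{xy}) = \{y\}$. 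In particular $b_{xy} \in \overline{N}(x) \cap \overline{N}(z)$, and symmetric conclusions hold for the witnesses of the other two triangle edges.

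The core of the plan is then a charging scheme: for each triangle $xyz$, I would try to exhibit three distinct edge-vertex incidences $(e',w)$ with $e' \in E(G[\overline{N}(w)])$, thereby establishing $(\star)$. A natural source comes from the second length-$2$ path from $b_{xy}$ to $z$: since $b_{xy}$ and $z$ are non-adjacent and the graph has diameter $2$, they share a common neighbor, and $y$ is one such; any second common neighbor $c \ne y$ necessarily lies in $\overline{N}(x)$ (because $N(x) \cap N(b_{xy}) = \{y\}$), producing an edge $b_{xy}c$ inside $G[\overline{N}(x)]$. Symmetric constructions for the triangle edges $yz$ and $zx$ furnish further candidate incidences, and I would then try to show these assigned incidences are essentially distinct across all triangles of $G$.

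The step I expect to be the main obstacle is precisely this injectivity. A given edge in some $G[\overline{N}(w)]$ can \emph{a priori} be the assigned witness of many triangles simultaneously, and the diameter-$2$-critical hypothesis does not obviously prevent such overcounting; moreover the construction above may fail outright when $y$ is the unique common neighbor of $b_{xy}$ and $z$, forcing auxiliary arguments. Rescuing the scheme would probably require either a weighted charging (fractional charges distributed over several candidates per triangle) or a genuinely global ingredient in the Ruzsa-Szemer\'edi spirit, analogous to what F\"uredi used for the original Murty-Simon conjecture. Given the preview in the abstract that Conjecture~\ref{conj:CHd2} is in fact \emph{false}, I would expect this injectivity step to be exactly where a natural attack collapses, with its failure corresponding to an explicit construction in which the right-hand side of $(\star)$ is strictly smaller than $3T(G)$.
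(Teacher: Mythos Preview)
The statement you are attempting to prove is false; the paper does not prove Conjecture~\ref{conj:CHd2} but rather \emph{disproves} it via Theorem~\ref{thm:CHd2-false}, exhibiting an infinite family of diameter-$2$-critical graphs with average edge-degree at least $(\tfrac{10}{9}-o(1))n$. So there is no ``paper's own proof'' to compare against, and any proposed proof necessarily contains a fatal gap. That said, your reformulation $(\star)$ is correct and is precisely the paper's identity \eqref{ident:deg2-en}, namely $\sum_v d_v^2 - mn = 3|\mathcal{T}_3| - |\mathcal{T}_1|$; the conjecture amounts to $|\mathcal{T}_1| \ge 3|\mathcal{T}_3|$. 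Your witness vertex $b_{xy}$ is exactly the paper's notion of a \emph{foot} of a triangle (Definition~\ref{defi:foot}, Lemma~\ref{lem:feet>=2}).

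You correctly locate the obstruction at the injectivity step. The paper runs a closely related charging scheme: to each triangle $T$ it assigns the triples $\{v,y,z\} \in \mathcal{T}_1$ arising from its feet, proves these families are disjoint across triangles (Lemma~\ref{lem:feet-unique}), but can only guarantee \emph{two} feet per triangle (Lemma~\ref{lem:feet>=2}), yielding $|\mathcal{T}_1| \ge 2|\mathcal{T}_3|$ rather than the required $3|\mathcal{T}_3|$. Your variant, which instead seeks an auxiliary edge $b_{xy}c$ inside $G[\overline{N}(x)]$, fares no better: the second common neighbor $c$ need not exist, and when it does, nothing prevents the same incidence $(b_{xy}c, x)$ from being charged by many triangles through $x$. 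The paper's Construction~\ref{constr:d2-trip} (with $r = n$) makes $(\star)$ fail by a constant factor, so no refinement of such a charging can succeed in diameter $2$. It is worth noting that for diameters $k \ge 3$ the analogous charging \emph{does} go through, because critical paths of length $k$ supply enough extra vertices to give $|\mathcal{T}_1| \ge k|\mathcal{T}_3| \ge 3|\mathcal{T}_3|$; this is exactly the content of the paper's proof of Theorem~\ref{thm:CH-diamk}.
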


In terms of the numbers of vertices and edges ($n$ and $m$), the conclusion
of this conjecture is equivalent to:
\begin{displaymath}
  \sum_v d_v^2 \leq nm .
\end{displaymath}

Given the Caccetta-H\"aggkvist conjecture, Conjecture
\ref{conj:murty-simon} then follows as an immediate consequence of
convexity: $\sum d_v^2$ is at least $n$ times the square of the average
degree, and so Conjecture \ref{conj:CHd2} implies that $nm \geq n
(2m/n)^2$, giving $m \leq n^2/4$.  In \cite{CH}, Caccetta and H\"aggkvist
proved the constant-factor approximation $\sum d_v^2 \leq \frac{6}{5} nm$,
but there was no improvement for over three decades.  Our first result
indicates why: the Caccetta-H\"aggkvist conjecture is false.  We
demonstrate this by constructing a rich family of diameter-2-critical
graphs, which may be of independent interest, as one challenge in the study
of diameter-critical graphs is to find broad families of examples.  (See
our Constructions \ref{constr:d2-bip} and \ref{constr:d2-trip}).

\begin{thm}
  \label{thm:CHd2-false}
  There is an infinite family of diameter-2-critical graphs for which
  \begin{displaymath}
   \overline{d(e)} \geq \left( \frac{10}{9} - o(1) \right) n,
  \end{displaymath}
  where $o(1)$ tends to $0$ as $n$ tends to infinity.
\end{thm}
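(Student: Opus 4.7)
The plan is to exhibit an explicit infinite family of diameter-$2$-critical graphs whose edge-degree average $\overline{d(e)}$ approaches $10n/9$. The starting observation, which both motivates and constrains the search, is that Conjecture~\ref{conj:CHd2} holds automatically for triangle-free graphs: in any triangle-free graph of diameter $2$, every edge $uv$ satisfies $N(u) \cap N(v) = \emptyset$, so $d(u) + d(v) = |N(u) \cup N(v)| \le n$, and summing over edges yields $\sum_v d_v^2 \le nm$. Hence any counterexample must contain many triangles; more specifically, it must contain many edges $uv$ with $d(u) + d(v) > n$, each of which is automatically in a triangle by the pigeonhole bound on $|N(u) \cup N(v)|$.

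A short calculation pinpoints the extremal degree profile yielding $10/9$: if roughly half of the $n$ vertices have degree $2n/3$ and the other half have degree $n/3$, then $m = n^2/4$ and $\sum_v d_v^2 = 5n^3/18$, giving $\overline{d(e)} = 10n/9$. I would therefore aim to build $G$ on $V = A \cup B$ with $|A| = |B| = n/2$, $B$ independent, a roughly $(n/3)$-regular bipartite graph between $A$ and $B$, and a roughly $(n/3)$-regular internal graph on $A$. Diameter $2$ would follow by pigeonhole: any two $B$-vertices share an $A$-neighbor (since $n/3 + n/3 > n/2 = |A|$), any two $A$-vertices share a neighbor (since $2n/3 + 2n/3 > n$), and mixed pairs similarly share an $A$-neighbor.

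The main obstacle is securing diameter-$2$-criticality. The $A$--$B$ edges can be made critical by ensuring they have no common neighbor: for each $A$--$B$ edge $ab$ we require $N_A(a) \cap N(b) = \emptyset$, which is a design constraint on the alignment between the internal $A$-graph and the bipartite $A$--$B$ graph. The $A$--$A$ edges, by contrast, lie in many triangles (forced by pigeonhole $|N(a_1) \cap N(a_2)| \ge n/3$ for adjacent $A$-pairs), so their removal does not immediately push $d(a_1, a_2)$ past $2$. They must instead be critical via a witness mechanism: for each internal $A$-edge $uv$, some pair $(x, y)$ in $G$ has its unique length-$2$ path running through $uv$. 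Concretely, for each $A$-edge $a_1 a_2$ one needs $b \in B$ with $|N(b) \cap N_A(a_2)| = 1$ (and symmetrically). Since a random bipartite design would produce intersections of order $\Theta(n)$, this uniqueness has to be engineered by a structured (algebraic or combinatorial) design that manufactures all the needed $1$-intersections simultaneously for all internal $A$-edges.

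Once the construction is fixed, the theorem then follows via (i) verifying diameter $=2$ by the pigeonhole expansion above, (ii) running the edge-type case analysis to confirm every edge is critical, and (iii) computing $\sum_v d_v^2$ and $m$ in closed form and passing to the limit as $n \to \infty$, yielding $\overline{d(e)} = (10/9 - o(1)) n$. The simultaneous design of witness pairs for all internal $A$-edges in Step (ii) is where the essential combinatorial work lies.
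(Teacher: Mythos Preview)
Your blueprint cannot be realized: the same pigeonhole inequalities you invoke to force diameter $2$ also force \emph{every} pair of vertices to have at least $n/6$ common neighbors, and this rules out criticality of every edge. Indeed, for $p,q\in A$ one has $|N_A(p)\cap N_A(q)|\ge n/3+n/3-n/2=n/6$; for $p\in A$, $q\in B$ one has $|N_A(p)\cap N(q)|\ge n/6$ since $N(q)\subseteq A$ with $|N(q)|=n/3$; and for $p,q\in B$ likewise $|N(p)\cap N(q)|\ge n/6$. Now recall that an edge $uv$ is critical in a diameter-$2$ graph only if either $u$ and $v$ have no common neighbor, or some non-adjacent pair $\{u,y\}$ has $v$ as its \emph{unique} common neighbor (or the symmetric statement with $u,v$ swapped). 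Both alternatives demand a pair with at most one common neighbor, which never occurs in your graph. In particular your assertion that ``the $A$--$B$ edges can be made critical by ensuring they have no common neighbor'' is impossible with these parameters ($n/3+n/3>n/2$), and the witness mechanism you propose for $A$--$A$ edges---finding $b\in B$ with $|N(b)\cap N_A(a_2)|=1$---is blocked not merely for random designs but for \emph{all} designs with this degree profile, since that intersection is always at least $n/6$ by cardinality alone.

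The paper escapes this trap by sacrificing a third of the vertices to sublinear degree. It builds $G''$ on three equal blocks $A,B,C$ of size $n$ each (total $N=3n$): a sparse random graph $G_{n,p}$ with $p=o(1)$ on $A$, its near-complete complement on $B$, a perfect matching between $A$ and $B$, and a complete bipartite graph between $B$ and $C$. Then $A$-vertices have degree $o(N)$, $B$-vertices degree $\approx 2N/3$, and $C$-vertices degree $N/3$; the ratio $\sum d_v^2/(Nm)$ still comes out to $10/9-o(1)$, but now every edge has a criticality witness involving a low-degree $A$-vertex (for instance, deleting a $B$--$C$ edge $yz$ pushes the matching partner $y'\in A$ of $y$ beyond distance $2$ from $z$), and the common-neighbor obstruction vanishes because one side of each witness pair has only $o(N)$ neighbors.
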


Since our construction opens a constant factor gap, this shows that the
Caccetta-H\"aggkvist conjecture cannot be used to resolve the original
problem.  We were intrigued to study the best value of the constant
multiplier on the Caccetta-H\"aggkvist conjecture, as it is a natural question in its own
right, and proved that the $\frac{6}{5}$ factor from \cite{CH} was not
sharp either.

\begin{thm}
  \label{thm:CHd2-improve}
  There are absolute constants $c$ and $N$ such that for every
  diameter-2-critical graph with at least $N$ vertices, the average edge-degree is at most $(
  \frac{6}{5} - c)$ times the number of vertices.
\end{thm}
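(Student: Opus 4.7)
The plan is to open up the proof of the Caccetta--H\"aggkvist $\frac{6}{5}nm$ bound, isolate where it loses a constant factor of slack, and quantify that slack. The starting point is the following structural fact in any diameter-$2$-critical graph $G$: for each edge $e = uv$ there exist witness vertices whose distance strictly increases after deleting $e$, and a short case analysis shows this forces either \textbf{(A)} $u$ and $v$ have no common neighbour, or \textbf{(B)} there is a vertex $z \notin \{u,v\}$ with $z \not\sim v$ and $N(z) \cap N(v) = \{u\}$ (or the symmetric situation with $u$ and $v$ exchanged). These witnesses are what drive the whole argument through local inequalities on $d_u + d_v$ and on the common neighbourhoods of edge endpoints.

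First I would fix one such witness for every edge, partition $E(G)$ into the corresponding classes, and revisit the Caccetta--H\"aggkvist counting to express $\sum_v d_v^2 = \sum_{uv \in E}(d_u + d_v)$ as a sum of contributions along this partition. A type-(A) edge $uv$ satisfies $|N(u) \cap N(v)| = 0$, which forces $d_u + d_v \leq n$, already much tighter than the generic bound $\frac{6n}{5}$. A type-(B) witness pins the vertex $u$ as the unique bridge between $v$ and $z$, and counting these assignments across all vertices gives an averaging identity tying $\sum_v d_v^2$ to the number of such pinned pairs. The $\frac{6}{5}$ factor in \cite{CH} arises from balancing these two contributions under the pessimistic assumption that each local inequality is saturated everywhere simultaneously.

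Next I would set up a stability-style dichotomy: either a positive fraction of the edges strictly fail to saturate the relevant local inequality (which yields an explicit improvement $c > 0$ directly through the double count), or almost every edge saturates it. In the latter case local rigidity must propagate: iterating the type-(B) structure along chains $z$--$u$--$v$ and combining it with the triangle-count restrictions forced by diameter-$2$-criticality should pin the graph close to a blow-up of a small fixed pattern. A final step verifies that no such near-extremal pattern actually attains $\overline{d(e)} \geq \bigl(\frac{6}{5} - c\bigr)n$ for sufficiently small absolute $c$, either by a direct computation on the pattern itself or by exhibiting an edge of the pattern whose removal does not increase the diameter, contradicting critically.

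The hard part will be the propagation step: bounding each local inequality individually is straightforward, but showing that near-tight patterns on neighbouring edges must be globally compatible requires genuine work. To control this I plan to use the triangle count $t(e) = |N(u) \cap N(v)|$ as the pivot, since type-(A) edges have $t(e) = 0$ while type-(B) edges have $t(e) \geq 1$ with equality tightly governed by the witness; combining this edge-by-edge information with the global diameter-$2$ identity $\sum_v \binom{d_v}{2} \geq \binom{n}{2} - m$ (every non-edge has a common neighbour) should close off the remaining configurations. Once the dichotomy is established, the extraction of the absolute constants $c$ and $N$ becomes a routine bookkeeping exercise over the two cases.
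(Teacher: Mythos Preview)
Your high-level stability dichotomy matches the paper's spirit, but there are two genuine problems. First, the $\frac{6}{5}nm$ bound is not a sum of per-edge inequalities $d_u + d_v \le \frac{6}{5}n$ that one can ``saturate locally''; individual edges can and do exceed this (already in the paper's own Construction~\ref{constr:d2-trip}). The bound comes from the global identities $\sum_v d_v^2 - mn = 3|\mathcal{T}_3| - |\mathcal{T}_1|$ and $m(n-2) = 3|\mathcal{T}_3| + 2|\mathcal{T}_2| + |\mathcal{T}_1|$ together with the foot inequality $|\mathcal{T}_1| \ge 2|\mathcal{T}_3|$. The actual slack variables are therefore $|\mathcal{T}_2|$ and the number $|\mathcal{T}_3^*|$ of triangles with at least three feet, and the paper's dichotomy is: either one of these is at least a small constant times $nm$ (and one wins immediately from the identities), or both are tiny. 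Your (A)/(B) edge partition is a correct description of the witness structure but does not line up with these quantities, so ``a positive fraction of edges strictly fail to saturate the relevant local inequality'' is not a well-posed hypothesis here.

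Second, the propagation step you flag as hard is essentially the entire proof, and the target structure is not a blow-up of a small pattern. When both $|\mathcal{T}_2|$ and $|\mathcal{T}_3^*|$ are small, the paper runs through a chain of eight lemmas: small $|\mathcal{T}_2|$ forces almost every edge to have $|N_u \Delta N_v|$ small; unless $\sum d_v^2$ is already small, many edges have common neighbourhood larger than $n/2$; one such neighbourhood contains a near-clique $D$ of size about $n/2$; an auxiliary \emph{digraph} on $D$ (recording which endpoint each foot attaches to) together with smallness of $|\mathcal{T}_3^*|$ forces most vertices of $D$ to be ``rich'', meaning a single external foot serves many of its incident edges; these external feet form a set $Y$ with $G[X,Y]$ a perfect matching; and a direct degree computation on the resulting $X \cup Y \cup Z$ decomposition finishes. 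The emergent structure is essentially the paper's own counterexample family (Construction~\ref{constr:d2-bip}), not a blow-up, and neither ``iterating type-(B) chains'' nor the inequality $\sum_v \binom{d_v}{2} \ge \binom{n}{2} - m$ gets you to it.
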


On the other hand, it turns out that for all $k \geq 3$, the
diameter-$k$-critical analogue of the Caccetta-H\"aggkvist conjecture holds
precisely, and is tight.

\begin{thm}
  \label{thm:CH-diamk}
  For every diameter-critical graph with diameter at least 3, the average
  edge-degree is at most the number of vertices.  This is asymptotically
  tight: for each fixed $k \ge 3$, there is an infinite family of graphs
  for which the average edge-degree is at least $(1-o(1))$ times the number
  of vertices.
\end{thm}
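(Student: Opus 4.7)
For the upper bound, I would prove the per-edge inequality $d_u+d_v \le n$ for every edge $uv$ of a diameter-$k$-critical graph with $k \ge 3$, so that summing gives $\sum_v d_v^2 = \sum_{uv \in E(G)}(d_u+d_v) \le nm$. Partitioning $V = \{u,v\} \sqcup A \sqcup B \sqcup C \sqcup F$ with $A = N(u) \setminus N[v]$, $B = N(v) \setminus N[u]$, $C = N(u) \cap N(v)$, and $F = V \setminus (N[u] \cup N[v])$, a direct count gives $d_u+d_v - n = |C| - |F|$; so the per-edge bound is equivalent to constructing, for each edge $uv$, an injection $\phi: C \to F$.

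To set up the injection, I exploit diameter-criticality as follows. Fix $w \in C$, so $uvw$ is a triangle. Apply criticality to the edge $uw$ to pick a witness pair $(x,y)$: every shortest $(x,y)$-path in $G$ uses $uw$, and $d_{G-uw}(x,y) \ge k+1$. The triangle $uvw$ supplies the detour $x \to \cdots \to u \to v \to w \to \cdots \to y$ of length $d_G(x,y)+1$, which forces $d_G(x,y) = k$ and $d_{G-uw}(x,y) = k+1$. Parametrise the shortest $(x,y)$-path as $x = p_0, p_1, \ldots, p_a = u, p_{a+1} = w, z_i := p_{a+1+i}$ for $1 \le i \le b$, with $a+b = k-1$. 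The central claim is that each $z_i$ lies in $F(uv)$: if $d(v, z_i) \le i$, then the rerouted path $x \to \cdots \to v \to \cdots \to z_i \to z_{i+1} \to \cdots \to y$ has length at most $(a+1) + i + (b-i) = k$ and avoids $uw$, which contradicts the witness property; hence $d(v, z_i) = i+1 \ge 2$, and the same reasoning gives $d(u, z_i) = i+1 \ge 2$. Thus $z_i \in F(uv)$ for all $1 \le i \le b$, and symmetrically $p_j \in F(uv)$ for all $0 \le j \le a-2$. In particular every $w \in C$ has a neighbour in $F$ (namely $z_1$ when $b \ge 1$; or $x$ itself when $b=0$, since then $a \ge 2$ and $x \in F$).

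The delicate part is promoting this local observation to a genuine injection $\phi: C \hookrightarrow F$. I plan to set it up as a bipartite matching between $C$ and $F$ using the edges of $G$, and verify Hall's condition $|N_H(S)| \ge |S|$ by a more refined form of the same rerouting argument: were distinct $w_1, w_2 \in S$ to force the same $F$-vertex in every available witness of their respective edges $uw_1, uw_2$, one could splice the two witness paths together to produce an alternate shortest path avoiding one of the $uw_i$'s, contradicting that edge's witness property. This Hall-style step is where I expect the main technical work, particularly at $k = 3$ where the witness path has only three edges and the combinatorial slack is minimal.

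For the asymptotic tightness, I would use the \emph{broom} $B_{n,k}$: a path $v_0 v_1 \cdots v_{k-1}$ together with $n-k$ pendant vertices attached to $v_0$. It is routine to verify that $B_{n,k}$ has diameter exactly $k$, and that each edge is critical (removing a pendant edge isolates a leaf; removing the path edge $v_i v_{i+1}$ strictly lengthens the pendant-to-$v_{k-1}$ distance from $k$ to $k+1$). The dominant degree is $d_{v_0} = n-k+1$, so $\sum_v d_v^2 = (n-k+1)^2 + O(1)$ while $m = n-1$, giving $\overline{d(e)} = n - 2k + 4 + o(1) = (1-o(1))n$ as $n \to \infty$ with $k$ fixed.
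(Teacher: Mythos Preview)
Your lower-bound construction is correct and is exactly what the paper uses: the broom $B_{n,k}$ is Construction~\ref{constr:dK} with $a=b=1$ and $c=n-k$.

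Your upper-bound plan, however, aims for something strictly stronger than the theorem: the \emph{per-edge} inequality $d_u+d_v\le n$, rather than the averaged statement $\sum_v d_v^2\le nm$. The local analysis you carry out is essentially right---the witness path for $uw$ does produce $k-2$ vertices of $F(uv)$---but there is a slip in the $b=0$ case: you claim $x$ is a neighbour of $w$, yet $d_G(x,w)=d_G(x,y)=k\ge 3$, so it is not. More seriously, the step you flag as ``delicate'' is a genuine gap. Having shown that each $w\in C$ has some $F$-neighbour does not give Hall's condition, and your splicing sketch does not explain why two witnesses for $uw_1$ and $uw_2$ (which pass through \emph{different} edges of $G$) could not land on the same $F$-vertex at every available choice; I do not see what would be contradicted by splicing them. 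Whether the per-edge inequality is even true in general is not established by your argument, and you would need it to hold with equality in the clique-plus-matching example, so there is no slack to spare at $k=3$.

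The paper avoids all of this by working globally rather than edge-by-edge. From the identity
\[
\sum_v d_v^2 - nm \;=\; 3|\mathcal{T}_3| - |\mathcal{T}_1|,
\]
where $\mathcal{T}_i$ is the set of vertex-triples inducing exactly $i$ edges, it suffices to show $|\mathcal{T}_1|\ge k|\mathcal{T}_3|$. The paper does this by assigning to each triangle $T$ a family $\mathcal{F}(T)\subseteq\mathcal{T}_1$ of size at least $k$, built from vertices along a single witness path for one edge of $T$ together with one extra ``foot'' vertex, and then proving the families $\mathcal{F}(T)$ are pairwise disjoint via a short rerouting argument. This is an injection at the level of \emph{triangles into $\mathcal{T}_1$-triples}, not vertices into $F$, and the disjointness falls out cleanly without any Hall-type matching.
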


As an immediate consequence, the earlier convexity argument proves that
diameter-critical graphs with diameter at least 3 have at most $n^2/4$
edges, for all $n$. Regarding the maximum number of edges in diameter-$k$-critical graphs
for $k \ge 3$, however, it was conjectured by
Krishnamoorthy and Nandakumar \cite{KN} (who disproved an earlier
conjecture of Caccetta and H\"aggkvist on this question) that a particular
instance of the following construction is optimal (see Lemma \ref{lem:constr:dK} for a proof of diameter-criticality).

\begin{constr}
  \label{constr:dK}
  Let $a$, $b$, and $c$ be positive integers.  Create a partition $V_0 \cup
  V_1 \cup \cdots \cup V_k$ such that $|V_0| = a$,
  $|V_1|=|V_2|=\ldots=|V_{k-1}|=b$ and $|V_k|=c$.  Introduce edges by
  placing complete bipartite graphs between $V_0$ and $V_1$, and between
  $V_{k-1}$ and $V_k$, and placing $b$ vertex-disjoint paths of length
  $k-2$ from $V_1$ to $V_{k-1}$, each with exactly one vertex in $V_i$ for
  every $1\le i\le k-1$.
\end{constr}

Krishnamoorthy and Nandakumar \cite{KN} observed that choosing $a=1$, $b
\approx \frac{n}{2(k-1)}$, and $c = n-a-b(k-1)$ optimized the number of
edges in this construction, yielding a total of $\frac{n^2}{4(k-1)}+o(n^2)$
edges.  Our next result establishes a bound which deviates by an constant
factor.

\begin{thm}
  \label{thm:diamk-edges}
  Every diameter-$k$-critical graph on $n$ vertices has at most
  $\frac{3n^2}{k}$ edges.
\end{thm}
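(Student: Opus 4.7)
The plan is to combine a BFS layering argument from a diameter endpoint with the edge-degree inequality of Theorem~\ref{thm:CH-diamk}. Fix a pair $(u,v)$ with $d_G(u,v) = k$---which exists since $G$ has diameter $k$---and let $L_0, L_1, \ldots, L_k$ be the BFS distance layers from $u$, with $|L_i| = n_i$. Each layer is nonempty because any shortest $u$-$v$ path meets all of them. Pick a shortest $u$-$v$ path $P = w_0 w_1 \cdots w_k$ with $w_i \in L_i$. The foundational lemma, proved by an elementary double-counting, is the \emph{path degree-sum bound}
\[
  \sum_{i=0}^k d_G(w_i) \;\le\; 3n.
\]
To see this, any vertex $z \in V$ lies in a unique layer $L_{j(z)}$, and $z$ can be a neighbor of $w_i$ only when $|i - j(z)| \le 1$, because $N_G(w_i) \subseteq L_{i-1} \cup L_i \cup L_{i+1}$. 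Therefore $z$ contributes to at most three of the terms of $\sum_i d_G(w_i) = \sum_i |N(w_i)|$.

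Next I would extend this one-path estimate to a bound on the total number of edges. For each $e = xy \in E(G)$, the diameter-criticality of $G$ provides a pair $(a_e, b_e)$ with $d_{G-e}(a_e, b_e) > k \ge d_G(a_e, b_e)$, so $e$ lies on every shortest $(a_e, b_e)$-path in $G$. Using eccentricity arguments, I would extend $(a_e, b_e)$ to a distance-$k$ pair whose shortest path still traverses $e$, producing a length-$k$ shortest path $P_e$ through $e$. Summing the path degree-sum bound across the family $\{P_e\}_{e \in E(G)}$, and carefully managing the double-counting (each edge lies in a bounded number of these $P_e$'s), should yield
\[
  \sum_v d_G(v)^2 \;\le\; \frac{6n}{k}\cdot m.
\]
Combining with the Cauchy--Schwarz lower bound $\sum_v d_G(v)^2 \ge (2m)^2/n = 4m^2/n$ then gives $m \le \frac{3n^2}{2k}$, which is even stronger than what the theorem asserts.

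The main obstacle is the extension step: for an edge $e$ whose criticality-witness pair $(a_e, b_e)$ has distance strictly less than $k$, one must construct a length-$k$ shortest path through $e$ by lengthening either the $a_e$-side or $b_e$-side while preserving that $e$ continues to lie on a shortest path between the new endpoints. This demands a careful eccentricity analysis---one natural candidate is to choose the extension endpoint on the ``far side'' of $e$ from a vertex of eccentricity $k$. If certain edges resist such extension, they would have to be handled separately, either by invoking the weaker global bound $\sum_v d_G(v)^2 \le nm$ from Theorem~\ref{thm:CH-diamk} on them, or by averaging the path degree-sum inequality over the collection of all length-$k$ shortest paths in $G$ and using a pigeonhole argument to control how frequently each edge is covered.
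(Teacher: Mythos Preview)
Your approach has a fatal gap: the intermediate inequality $\sum_v d_G(v)^2 \le \frac{6n}{k}\,m$ that you aim to derive is simply false for $k\ge 7$. Indeed, the lower-bound half of Theorem~\ref{thm:CH-diamk} (proved via Construction~\ref{constr:dK}) exhibits diameter-$k$-critical graphs with $\sum_v d_v^2 = (1-o(1))\,nm$ for every fixed $k\ge 3$; concretely, take $a=b=1$ and $c=n-k$ in Construction~\ref{constr:dK}, where one vertex has degree $n-k+1$ and $m=n-1$, so $\sum_v d_v^2 \ge (n-k+1)^2 \sim n^2 \sim nm$. Since your route to the edge bound passes through this inequality and then Cauchy--Schwarz, no amount of ``careful double-counting management'' can rescue it. The heuristic that each of the $k$ edges on $P_e$ has edge-degree about $6n/k$ on average is correct \emph{along a single geodesic}, but it cannot be globalized: in the construction above, almost all edges are incident to the single high-degree hub $u_{k-1}$, and they all share essentially the same geodesic, so averaging over the family $\{P_e\}$ gives you no new information beyond the single inequality $\sum_{w\in P} d(w)\le 3n$.

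Separately, your extension step---producing a length-$k$ geodesic through every edge---is left as a hope rather than an argument, and the paper does not attempt it. Instead, the paper proves (Lemma~\ref{lem:k/3-assoc}) only that every edge $e$ lies on a path of length $\lceil k/3\rceil$ which is $\lceil k/3\rceil$-\emph{associated} with $e$; this weaker statement is what actually requires the DFS-tree detour when the critical pair for $e$ is at distance below $\lceil k/3\rceil$. The paper then abandons degree sums entirely and instead double-counts pairs $(\{x,y\},e)$ such that $e$ is incident to $\{x,y\}$ and lies on every shortest $(x,y)$-path: each pair contributes to at most two edges, while each edge contributes at least $\lceil k/3\rceil$ pairs (one for each vertex of its associated path), giving $e(G)\cdot k/3 \le n^2$ directly. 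Your path degree-sum lemma $\sum_{i} d(w_i)\le 3n$ is a nice observation but is not the right currency here; the paper's argument never uses degrees at all.
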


Finally, we investigated the case $k=3$ in greater depth, as it is the
first (asymptotically) unresolved diameter.  The above construction
produces a diameter-3-critical graph with $(\frac{1}{8} + o(1)) n^2$ edges.
We point out that there is a significantly different graph with the same
asymptotic edge count: a clique $A:=K_{n/2}$ together with a perfect
matching (with $n/2$ edges) between $A$ and its complement $A^c$.  On the
other hand, as observed above, our Theorem \ref{thm:CH-diamk} immediately
gives $n^2/4$ as an upper bound.  We improve this to an intermediate value,
and note that our proof applies in diameters greater than 3 as well (see
remark at end of Section \ref{sec:diam3}).

\begin{thm}
  \label{thm:diam3}
  Every diameter-3-critical graph on $n$ vertices has at most
  $\frac{n^2}{6} + o(n^2)$ edges.
\end{thm}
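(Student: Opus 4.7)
The plan is to combine Theorem~\ref{thm:CH-diamk}, which supplies the edge-degree bound $\sum_v d_v^2 \le mn$ for every diameter-$3$-critical graph, with a dichotomy on the maximum degree $\Delta := \max_v d_v$.

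\emph{Low-degree case.} For every graph one has $\sum_v d_v^2 \le \Delta \sum_v d_v = 2m\Delta$, and Cauchy--Schwarz gives $\sum_v d_v^2 \ge (2m)^2/n$. Combining these yields the unconditional bound $m \le n\Delta/2$. Consequently, whenever $\Delta \le (1/3 + o(1))n$, we already obtain $m \le n^2/6 + o(n^2)$---without even invoking Theorem~\ref{thm:CH-diamk}---and we are done.

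\emph{High-degree case.} Suppose some vertex $v$ has degree $\Delta \ge (1/3 + \delta)n$ for a fixed $\delta>0$. Partition the remaining vertices as $A \sqcup B$ with $A = N(v)$, and split $B = B_2 \sqcup B_3$ by distance from $v$. The edge count decomposes as
\[
  m = \Delta + e(A) + e(A,B_2) + e(B_2) + e(B_2,B_3) + e(B_3),
\]
and each piece needs to be bounded by harvesting structural consequences of diameter-$3$-criticality: (i) every pair inside $A$ has $v$ as a common neighbor, so an edge $uw \subset A$ can be critical only via a witness pair whose endpoints lie outside $\{u,w\}$, forcing length-$3$ paths reaching into $B$; (ii) every vertex of $B_3$ has no neighbor in $A$ and reaches $v$ only along a length-$3$ path $v$--$A$--$B_2$--$B_3$, so the edges incident to $B_3$ are constrained by the criticality witnesses of the $B_2$--$B_3$ bridge edges; (iii) each critical edge is equipped with a witness pair $(x,y)$ whose every length-$\le 3$ path traverses it, and summing these local certificates constrains how edges can accumulate globally. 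Feeding these constraints into the degree-square inequality $\sum_v d_v^2 \le mn$, applied separately to the degree sequences on $A$ and on $B$, should force $m \le n^2/6 + o(n^2)$ in this regime as well.

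\emph{Main obstacle.} The bottleneck is precisely the high-$\Delta$ regime: the unconditional bound $m \le n\Delta/2$ degrades linearly once $\Delta > n/3$, so it is unavoidable to extract structural information from diameter-$3$-criticality beyond what is already encoded by Theorem~\ref{thm:CH-diamk}. Packaging the per-edge witness data (diametral paths, witness pairs) into a clean global inequality strong enough to bridge the gap between $n\Delta/2$ and $n^2/6$ uniformly in $\Delta \in (n/3, n-2]$ is where the genuine work resides, and I expect it to rely on a double-count of length-$\le 3$ witness paths across the $A$/$B$ cut combined with the fact that a vertex of degree $\ge (1/3+\delta)n$ already accounts for a disproportionate share of the degree-squared budget.
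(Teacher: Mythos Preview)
Your low-degree case is correct but trivially so; the entire difficulty lies in the high-degree regime, and there you have not given a proof. You write that the structural constraints ``should force $m \le n^2/6 + o(n^2)$'' and that producing the requisite inequality ``is where the genuine work resides,'' then stop. Both near-extremal constructions mentioned in the paper (the clique-plus-matching on $n/2+n/2$ vertices, and Construction~\ref{constr:dK} with $k=3$) have $\Delta \approx n/2$, so the high-degree case is not a corner case but the heart of the problem; deferring it leaves nothing established beyond $m\le n\Delta/2$.

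The tools you list do not obviously suffice. Theorem~\ref{thm:CH-diamk} alone gives only $m\le n^2/4$ via Cauchy--Schwarz, and ``applying $\sum_v d_v^2\le mn$ separately to the degree sequences on $A$ and $B$'' is not a meaningful operation: the inequality is global, and the induced subgraphs $G[A]$, $G[B]$ need not be diameter-critical. Your observations (i)--(iii) are qualitative; none of them is turned into a counting inequality, and in particular you give no mechanism by which the witness paths reaching into $B$ limit the number of edges inside $A$ (in the clique-plus-matching example, $A$ is nearly complete and $|B|$ is tiny, so any such bound must be delicate). The claim that a single vertex of degree $(1/3+\delta)n$ ``accounts for a disproportionate share of the degree-squared budget'' contributes only $\Theta(n^2)$ to $\sum d_v^2$, against a budget of $mn=\Theta(n^3)$, so it is negligible.

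For comparison, the paper's proof proceeds along an entirely different axis. It runs a greedy algorithm that covers the edge set by critical paths, assigning to each iteration a set of at least two distinct critical or $2$-critical vertex pairs; after deleting $o(n^2)$ edges (controlled via the Ruzsa--Szemer\'edi $(6,3)$ theorem), every such pair has disjoint neighborhoods in the resulting graph $G_0$. Combining this with F\"uredi's elementary inequality $e(G_0)+\di(G_0)\le n^2/2$ yields roughly $3e(G_0)\le n^2/2$. The argument never isolates a high-degree vertex and does not use Theorem~\ref{thm:CH-diamk} at all.
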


The rest of this paper is organized as follows.  The next section contains
constructions of families of diameter-2-critical graphs, which contain
counterexamples to the Caccetta-H\"aggkvist conjecture.  Section 3 resolves
the Caccetta-H\"aggkvist conjecture for all diameters $k \geq 3$.  In
Section 4, we improve the upper bound on the diameter-2
Caccetta-H\"aggkvist result.  Section 5 establishes an upper bound on the
edge count for all diameters, which deviates by a constant factor (12) from
all best known constructions.  Finally, Section 6 proves a substantially
stronger upper bound for the diameter-3 case.

Throughout our proofs, we will encounter and manipulate many paths.  We
will use $ab$ to denote an edge, and $abc$, $abcd$, etc.  to denote paths.
If $u$ and $v$ are vertices of a path $P$, we will write $uPv$ to denote
the subpath of $P$ with $u$ and $v$ as endpoints.

\section{Diameter-2-critical constructions}

In this section, we prove Theorem \ref{thm:CHd2-false}, by constructing a
very rich family of diameter-2-critical graphs.  Indeed, a major challenge
in studying diameter-2-critical graphs is the lack of understanding of the
menagerie of examples of such graphs.  We proceed with a series of two
constructions which give rise to a broad variety of examples.

\begin{constr}
  \label{constr:d2-bip}
  Let $G$ be an arbitrary $n$-vertex graph for which both $G$ and its
  complement have diameter at most 2.  Create a new graph $G'$ by taking
  two disjoint copies $A$ and $B$ of the set $V(G)$, placing an induced copy of $G$ in $A$,
  placing an induced copy of the complement of $G$ in $B$, and placing a perfect matching between $A$ and $B$
  such that each edge in this matching joins a vertex in $A$ to its copy in $B$. Then $G'$ is
  diameter-2-critical.
\end{constr}

\begin{proof}
  Select vertices $x \in A$ and $y \in B$ so that $xy$ is not an edge of
  the perfect matching.  Let $x' \in B$ and $y' \in A$ be the respective
  partners of $x$ and $y$ according to the perfect matching.  Then, exactly
  one of $x' y$ and $x y'$ is in $G'$, while $xy$ is not, and thus
  distance between $x$ and $y$ is exactly 2.  At the same time, vertices in
  the same part are at distance at most 2, since both $G$ and its
  complement have diameter at most 2.  Therefore, $G'$ has diameter equal
  to 2.

  Also, it is clear that upon deleting any matching edge $x x'$ (where $x
  \in A$ and $x' \in B$), the distance between $x$ and $x'$ rises to at
  least 3.  Deleting any edge $xy$ in $A$ increases the distance between
  $x$ and $y'$ above 2, where $y' \in B$ is the partner of $y \in A$,
  and so we conclude that $G'$ is indeed diameter-2-critical.
\end{proof}

At this point, although we can generate a wide variety of
diameter-2-critical graphs, they do not yet bring $\sum d_v^2$ above $nm$.
However, if one uses a very sparse graph $G$ (with $o(n^2)$ edges) as the
generator, one finds that $\sum d_v^2$ is asymptotically $nm$, while being
very different from the balanced complete bipartite graph that also
achieves that bound.  As the balanced complete bipartite graph was quite a
stable optimum, this second point creates the possibility for us to
destabilize it.  We exploit this by augmenting the construction with a
third part.

\begin{constr}
  \label{constr:d2-trip}
  Let $r$ be an arbitrary natural number, and let $G$ be an $n$-vertex
  graph for which both $G$ and its complement have diameter at most 2.
  Create $G'$ from $G$ as in Construction \ref{constr:d2-bip}, and add a
  third disjoint set $C$ of $r$ vertices, with a complete bipartite graph
  between $B$ and $C$.  Then, the resulting $(2n+r)$-vertex graph $G''$ is
  diameter-2-critical.
\end{constr}

\begin{proof}
  We build upon our existing knowledge about $G'$.  It is clear that each
  vertex of $C$ is at distance at most 2 from every other vertex, and so
  $G''$ has diameter 2 as well.  Also, the deletion of any edge $yz$ with
  $y \in B$ and $z \in C$ would put $z$ at distance greater than 2 from $y'
  \in A$ (the matching partner of $y \in B$).  Therefore, $G''$ is
  diameter-2-critical, as claimed.
\end{proof}

We build our counterexample to the Caccetta-H\"aggkvist conjecture by
selecting suitable $n$ and $r$, and using a sparse diameter-2 random graph
$G$ whose complement also has diameter 2.  We saw that a property holds
\emph{asymptotically almost surely}, or a.a.s., if its probability tends to
1 as $n \rightarrow \infty$.  The random graph $G_{n,p}$ is constructed by
starting with $n$ vertices, and taking each of the $\binom{n}{2}$ potential
edges independently with probability $p$.

\begin{lem}
  Let $2 \sqrt{\frac{\log n}{n}} \leq p \leq 1 - 2 \sqrt{\frac{\log n}{n}}$.
  Then, a.a.s., $G_{n,p}$ and its complement both have diameter at most 2,
  and all vertex degrees are at most $2np$.
  \label{lem:sparse-diam-2-and-complement}
\end{lem}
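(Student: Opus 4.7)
The plan is to verify the three claimed properties separately, each via a standard first-moment estimate, and then combine them by a union bound. Two of the three ingredients (the two diameter bounds) are symmetric: replacing $p$ by $1-p$ leaves the hypothesis invariant and swaps $G_{n,p}$ with its complement, so it suffices to give one diameter argument in detail.

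For the maximum degree bound, I would fix a vertex $v$, observe that $\deg(v) \sim \text{Bin}(n-1,p)$ with mean $\mu = (n-1)p$, and apply a Chernoff inequality. Since $p \ge 2\sqrt{(\log n)/n}$ gives $\mu \ge \Omega(\sqrt{n\log n})$, taking deviation roughly $\mu$ (so that $2np > (1+o(1))\mu$) yields
\[
\Pr\bigl[\deg(v) > 2np\bigr] \le \exp\!\bigl(-\Omega(np)\bigr) = \exp\!\bigl(-\Omega(\sqrt{n \log n})\bigr).
\]
Taking a union bound over the $n$ choices of $v$ still gives $o(1)$. (When $p$ is close to $1$, the bound $2np$ exceeds $n-1$ and is trivially satisfied, so no separate treatment is needed.)

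For the diameter of $G_{n,p}$, I would bound $\Pr[d_G(u,v) > 2]$ for a fixed pair $\{u,v\}$. This event requires $uv \notin E(G)$ and that no other vertex $w$ is simultaneously adjacent to both $u$ and $v$, so
\[
\Pr[d_G(u,v) > 2] \;=\; (1-p)(1-p^2)^{n-2} \;\le\; \exp\!\bigl(-p^2(n-2)\bigr).
\]
The hypothesis $p \ge 2\sqrt{(\log n)/n}$ gives $p^2(n-2) \ge (4 - o(1))\log n$, so this probability is at most $n^{-4+o(1)}$, and a union bound over the $\binom{n}{2}$ pairs yields $o(1)$. For the complement's diameter, I would invoke the symmetry: by hypothesis $1-p$ also lies in $[2\sqrt{(\log n)/n},\, 1 - 2\sqrt{(\log n)/n}]$, and the complement of $G_{n,p}$ is distributed as $G_{n,1-p}$, so the same calculation applies verbatim.

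There is no serious obstacle: the argument is a routine concentration-plus-union-bound computation, and the lower bound on $p$ in the hypothesis was chosen precisely so that the diameter-2 failure probability per pair beats the $\binom{n}{2}$ union factor by a polynomial margin. The only points requiring a bit of care are to handle the symmetry between $G_{n,p}$ and its complement cleanly, and to note that when $p$ is close to $1$ the degree bound is automatic, so Chernoff is only doing real work in the range where $(n-1)p$ is between $\Omega(\sqrt{n \log n})$ and $n/2$.
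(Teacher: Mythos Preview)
Your proposal is correct and follows essentially the same approach as the paper: a first-moment bound on the probability that a fixed pair has no common neighbor (yielding $\le e^{-p^2(n-2)}$ per pair), a union bound over pairs, the $p \leftrightarrow 1-p$ symmetry for the complement, and Chernoff plus a union bound for the degree estimate. The paper's write-up is slightly terser (it omits the extra $(1-p)$ factor and the remark about $p$ close to $1$), but the method is the same.
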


\begin{proof}
  For any fixed pair of vertices, the probability that they have no common
  neighbors is exactly $(1-p^2)^{n-2}$.  A union bound over all pairs of
  vertices produces a total failure probability of at most $n^2 e^{-p^2
  (n-2)}$, which is clearly $o(1)$ because $p \geq 2 \sqrt{\frac{\log
  n}{n}}$.  Similarly, since $(1-p) \geq 2 \sqrt{\frac{\log n}{n}}$, we see
  that a.a.s., both $G_{n,p}$ and its complement have diameter at most 2.
  For the degrees, since a given vertex's degree is distributed as
  $\Bin{n-1, p}$, the Chernoff inequality implies that the probability it
  exceeds $2np$ is at most $e^{-\Theta(np)}$, and another union bound over
  all vertices implies the result.
\end{proof}

We are now ready to prove our first result, showing that there exist graphs
which have $\sum d_v^2$ significantly greater than the product of
their numbers of vertices and edges.

\begin{proof}[Proof of Theorem \ref{thm:CHd2-false}]
  Use $p = 2 \sqrt{\frac{\log n}{n}}$ to create a random $n$-vertex graph
  $G$ which satisfies the properties in Lemma
  \ref{lem:sparse-diam-2-and-complement}.  Use that in Construction
  \ref{constr:d2-trip} with $r = xn$ for some $x$.  We will optimize the
  choice of $x$ at the end.  The total number of edges is then exactly
  $\binom{n}{2} + n + n (xn)$, because $G$ and its complement together
  contribute exactly $\binom{n}{2}$ edges, the $A$--$B$ matching
  contributes $n$ edges, and $n (xn)$ edges come from the complete
  bipartite graph between $B$ and $C$.

  On the other hand, each vertex in $B$ has degree at least $(1-2p)n + xn$,
  and each vertex in $C$ has degree equal to $n$.  Therefore, the sum of
  the squares of the vertex degrees is at least
  \begin{displaymath}
    (1-o(1))n(1+x)^2 n^2 + (xn)n^2
    =
    (1-o(1))n^3 (1 + 3x + x^2) .
  \end{displaymath}
  The ratio between this and the product of the numbers of vertices and
  edges is at least
  \begin{displaymath}
    (1-o(1))
    \frac{n^3 (1 + 3x + x^2)}{(2n + xn) \left( \frac{n^2+n}{2} + xn^2 \right)}
    =
    (1-o(1)) \frac{1 + 3x + x^2}{(2+x) \left(\frac{1}{2} + x\right)} .
  \end{displaymath}
  Substituting $x = 1$ gives a ratio of $\frac{10}{9} - o(1)$, proving
  Theorem \ref{thm:CHd2-false}, and it is easy to verify that this choice
  of $x$ maximizes the final function on the right hand side.
\end{proof}

\section{Caccetta-H\"aggkvist for higher diameter}
\label{sec:ch-diam-hi}

In this section, we prove Theorem \ref{thm:CH-diamk}, which resolves the
analogue of the Caccetta-H\"aggkvist conjecture in diameters $k \geq 3$.
The key concept which enables a number of our proofs is the following
definition.

\begin{defi}
  \label{defi:k-associated}
  In a graph $G$, an unordered vertex pair $\{x, y\}$ and an edge $e$ are
  said to be \emph{$k$-associated} if their distance $d_G(x, y)$ is at most
  $k$, but when $e$ is deleted, their distance $d_{G-e}(x, y)$ becomes
  greater than $k$.  A pair $\{x, y\}$ is called \emph{$k$-critical} if
  there exists some edge $e$ which is $k$-associated with $\{x, y\}$.
\end{defi}

Note that whenever an edge $e$ is $k$-associated with a pair $\{x, y\}$, it
immediately follows that $e$ is part of every shortest path between $x$ and
$y$.  When $k \geq 3$, there may be multiple shortest paths (all of the
same length).  It is convenient for us to have a single path to refer to
for each $\{x, y\}$, and so for each $k$-critical pair $\{x, y\}$, we
arbitrarily select one such shortest path and denote it $P_{xy}$.  Let
these $P_{xy}$ be called \emph{$k$-critical paths}.  (We will have exactly
one $k$-critical path per $k$-critical pair.)  For each edge $e$, let
$\mathcal{P}(e)$ be the set of all $k$-critical paths $P_{xy}$ such that
$\{x,y\}$ is $k$-associated with $e$.  Observe that by the above, $e$ is
always on every $P_{xy} \in \mathcal{P}(e)$, and in diameter-$k$-critical
graphs, $\mathcal{P}(e)$ is always nonempty.

In both this section and the next section, it will be useful to keep track
of the 3-vertex subgraph statistics.  For $0\le i\le 3$, let
$\mathcal{T}_i$ be the set of unordered triples $\{x,y,z\}$ in $V(G)$ such
that their induced subgraph $G[\{x,y,z\}]$ has exactly $i$ edges.  By
counting the number of pairs $(v,f)$ that vertex $v$ is not incident to
edge $f$, we see that
\begin{align}
  \label{ident:e(n-2)}
  m(n-2)=3|\mathcal{T}_3|+2|\mathcal{T}_2|+|\mathcal{T}_1|,
\end{align}
which, together with the fact that
\begin{align}
  \label{ident:deg2}
  \sum_{v} \binom{d_v}{2}=3|\mathcal{T}_3|+|\mathcal{T}_2|,
\end{align} implies that
\begin{align}
  \label{ident:deg2-en}
  \sum_{v} d_v^2-mn=3|\mathcal{T}_3|-|\mathcal{T}_1|.
\end{align}

We are now ready to prove the diameter-$k$-critical analogue of the
Caccetta-H\"aggkvist conjecture for $k \geq 3$.

\begin{proof}[Proof of Theorem \ref{thm:CH-diamk}, upper bound]
  Let $G$ be a diameter $k$-critical graph with $k \geq 3$.  Let
  $T=\{x,y,z\}\in \mathcal{T}_3$ be an arbitrary triangle in $G$. Writing
  $xy$ to denote the edge with endpoints $x$ and $y$, etc., select arbitrary
  $P_1\in \mathcal{P}(xy), P_2\in \mathcal{P}(yz)$ and $P_3\in
  \mathcal{P}(xz)$.  We claim that $P_1$, $P_2$, and $P_3$ all have length
  $k$.  Indeed, if, say, $P_1$ was a path of length at most $k-1$ from $u$
  to $v$ via $xy$, then by using $xz$ and $zy$ instead of $xy$, we obtain
  an alternate path of length at most $k$, contradicting the
  $k$-association of $xy$ and $\{u,v\}$.

  The path $P_1$ contains an edge adjacent to $xy$.  Without loss of
  generality, suppose that this edge is $tx$.  It is clear that $t \neq z$.
  Label the endpoints of $P_2$ by $u$ and $v$ such that $P_2$ traverses
  $u,y,z,v$ in that order.  We claim that $t\notin P_2$.  Indeed, if, say,
  $t \in uP_2y$, then since $t \neq y$, the path $u P_2 t x z P_2 v$ has
  length at most $k$ while avoiding the edge $yz$, contradicting the fact
  that $\{u, v\}$ is $k$-associated with $yz$.  Thus
  $|V(P_2)\cup \{t\} \setminus T|\ge k$.

  For each vertex $s\in V(P_2)\cup \{t\} \setminus T$, we choose an edge
  $f_s\in E(T)$ such that $s$ and $f_s$ form a triple $F_s\in
  \mathcal{T}_1$ as follows. For $t$, we have $ty\notin E(G)$ and $tz\notin
  E(G)$, as otherwise we could reroute $P_1$ between $y$ and $t$ either
  directly or via $yzt$, avoiding $yx$ entirely, contradicting $P_1 \in
  \mathcal{P}(xy)$.  So, the choice $f_t = yz$ produces $F_t = \{t, y, z\}
  \in \mathcal{T}_1$.  For any $s\in V(P_2)\setminus T$ which is
  between $u$ and $y$, clearly $sz\notin E(G)$. It also holds that
  $sx\notin E(G)$, as otherwise one could reroute $P_2$ via $x$ to avoid
  $yz$, while maintaining length at most $k$.  So $F_s:=\{s,x,z\}\in
  \mathcal{T}_1$ by choosing $f_s=xz$.  A similar argument handles the 
  $s\in V(P_2)\setminus T$ which are between $z$ and $v$.

  The above argument actually showed that for the triple $F_t$, $f_t = yz$
  and the edge $xy$ was on all shortest paths between $t$ and $y$.  For
  every other triple $F_s$, either $f_s = xz$ and $yz$ is on all shortest
  paths between $s$ and $z$, or $f_s = xy$ and $zy$ is on all shortest
  paths between $s$ and $y$.  In all of those cases, we have the property
  that
  \begin{quote}
    \emph{For any triple $F_s$, there exists an edge $f'\in E(T)-\{f_s\}$
  such that $f'$ is contained in all shortest $(s,s')$-paths, where
$s':=V(f')\cap V(f_s)$.}
  \end{quote}

  Let $\mathcal{F}(T)$ be the collection of triples which arise in this way
  from $T$, i.e., $\mathcal{F}(T) = \{F_s : s\in V(P_2)\cup \{t\} \setminus
T\}$.  We claim that $\mathcal{F}(T)\cap \mathcal{F}(T')=\emptyset$ for
distinct $T,T'\in \mathcal{T}_3$. To see this, assume for contradiction
that it is not the case. Then there exists $\{s,x,y\}\in \mathcal{F}(T)\cap
\mathcal{F}(T')$ for distinct triangles $T,T'$ such that $xy\in E(G)$.  As
$xy\in E(T)\cap E(T')$ by definition, let $T:=\{x,y,z\}$ and
$T':=\{x,y,z'\}$ for distinct vertices $z,z'$.  In light of the above
observation, we may assume that $xz$ is contained in all shortest
$(s,x)$-paths, and $yz'$ is contained in all shortest $(s,y)$-paths.  Let
$P$ be a shortest $(s,x)$-path and $P'$ be a shortest $(s,y)$-path, and
assume without loss of generality that $|P|\le |P'|$.  Then $sPzy$ is an
$(s,y)$-path of length $|P|\le |P'|$ which does not contain $yz'$, a
contradiction. This proves the claim.

Since $\mathcal{F}(T)$'s are disjoint subsets of $\mathcal{T}_1$ with at
least $k$ triples each, it follows that $|\mathcal{T}_1|\ge \sum_{T\in
\mathcal{T}_3} |\mathcal{F}(T)|\ge k|\mathcal{T}_3|\ge
3|\mathcal{T}_3|,$ completing the proof of the upper bound by \eqref{ident:deg2-en}.
\end{proof}

For the other part of our Caccetta-H\"aggkvist-type result for diameter 3
and higher, we must construct graphs which asymptotically approach our
bound.  We do this by selecting specific parameters for Construction
\ref{constr:dK}, and we include the following proof for completeness.

\begin{lem}
  \label{lem:constr:dK}
  Construction \ref{constr:dK} always produces a diameter-$k$-critical
  graph.
\end{lem}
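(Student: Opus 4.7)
The plan is a direct verification. The construction consists of $b$ vertex-disjoint spines $P^{(j)} = v_1^{(j)} v_2^{(j)} \cdots v_{k-1}^{(j)}$, with $v_i^{(j)} \in V_i$, together with the complete bipartite caps between $V_0$--$V_1$ and $V_{k-1}$--$V_k$. Since there are no cross-spine edges in the middle layers, any path that switches from one spine to another must first reach $V_0$ or $V_k$, and this single observation drives every distance calculation.

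First I would verify that the diameter equals $k$. Any pair in the same middle layer $V_i$ with $2 \le i \le k-2$ can be joined by walking along each endpoint's spine to the nearer cap and crossing through the bipartite graph, using $2\min(i, k-i) \le k$ edges. Pairs within $V_0, V_1, V_{k-1}, V_k$ sit at distance at most $2$ through the adjacent cap, and all cross-layer pairs are handled by routing through a single spine. A pair $(a,c) \in V_0 \times V_k$ is at distance exactly $k$ along any one spine, so the diameter is exactly $k$.

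For criticality I handle the three edge types. For $e = ab$ with $a \in V_0$ and $b = v_1^{(j)}$, I use the pair $(a, v_{k-1}^{(j)})$: its current distance is $k-1$ via spine $j$, and after deleting $e$ every replacement path must start $a \to b' = v_1^{(j')}$ with $j' \ne j$, then either traverse spine $j'$ and cross $V_k$ to reach $v_{k-1}^{(j)}$, or double back through $V_0$ to $b$, both giving length at least $k+1$. Edges between $V_{k-1}$ and $V_k$ are handled by the mirror argument. For an internal spine edge $e = v_i^{(j)} v_{i+1}^{(j)}$ with $1 \le i \le k-2$, I take the pair $(v_i^{(j)}, c)$ with $c \in V_k$: the current distance is $k-i \le k$, and after deletion any path from $v_i^{(j)}$ to $c$ must walk along the remaining half of spine $j$ down to $V_0$, switch spines, and climb all the way back up through $V_{k-1}$ to $V_k$, for total length at least $i + k > k$.

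The only real obstacle is ruling out shorter replacement paths in the third case, especially when $i$ is near the middle of the spine. The key observation is that a vertex $v_i^{(j)}$ with $2 \le i \le k-2$ has exactly two neighbors in $G$, both on spine $j$; once one of its incident spine edges is deleted, the direction of escape is forced, so every replacement path decomposes as a concatenation of at most four legs — a descent along spine $j$, a cap edge, an ascent along a new spine $j'$, and a cap edge — whose total length is straightforward to bound from below. The analogous constraint that $v_1^{(j)}$ and $v_{k-1}^{(j)}$ have all their non-spine neighbors in $V_0$ and $V_k$ respectively is what makes the first and third edge-type arguments go through.
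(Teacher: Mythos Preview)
Your argument is correct and matches the paper's: you use the same (or mirror-image) witness pairs for criticality, and your structural observation about forced routing through the caps is exactly what drives both proofs. The only difference is cosmetic --- for the diameter bound the paper sidesteps your case split by noting that for any $x,y$ there is a closed walk of length $2k$ through both (route $u \to v$ through $x$, then $v \to u$ through $y$, for any $u \in V_0$, $v \in V_k$), which immediately gives $d(x,y) \le k$.
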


\begin{proof}
  We first verify that every pair of vertices $\{x, y\}$ has distance at
  most $k$.  To see this, arbitrarily select vertices $u \in V_0$ and $v
  \in V_k$, allowing $\{u, v\}$ to possibly overlap with $\{x, y\}$.
  Observe that there is a path of length $k$ from $u$ to $v$ which passes
  through $x$ on the way, and a path of length $k$ from $v$ to $u$ which
  passes through $y$ on the way.  Therefore, there is a closed walk
  (possibly repeating vertices or edges) from $x$ back to itself via $y$,
  of length exactly $2k$, which implies that $x$ and $y$ are at distance at
  most $k$.

  To verify criticality, we have two types of edges.  Consider an edge $xy$
  in a matching, say with $x \in V_i$ and $y \in V_{i+1}$ where $1 \leq i
  \leq k-2$.  If $xy$ is deleted, then $y$ can only reach $V_0$ by going
  all the way to $V_k$ and then coming back, and so the distance between
  $y$ and any vertex of $V_0$ rises above $k$.  Now consider the other kind
  of edge $xy$, where $x \in V_0$ and $y \in V_1$, say. Let $P$ be the unique 
  path of length $k-2$ from $V_1$ to $V_{k-1}$ with endpoint $y$, and let $y'\in V_{k-1}$
  be the other endpoint of $P$. If edge $xy$ is deleted, then one may verify that 
  the distance between $x$ and $y'$ changes from $k-1$ to $k+1$, which completes our proof.  
\end{proof}

We now use Construction \ref{constr:dK} to prove our lower bound on
Caccetta-H\"aggkvist for diameter-$k$-critical graphs.

\begin{proof}[Proof of Theorem \ref{thm:CH-diamk}, lower bound]
  Let $k\ge 3$ be fixed.  We build $n$-vertex graphs by using Construction
  \ref{constr:dK} with $a = 1$, $b = 1$, and $c = n-k$.  (We actually can
  select any sub-linear function $b = o(n)$, e.g., $b = \sqrt{n}$, to
  create a wider variety of asymptotically extremal constructions.)  Then,
  \begin{displaymath}
    \sum_v d_v^2 = (1+o(1)) bn^2 ,
  \end{displaymath}
  and the number of edges is $(1+o(1)) bn$, and so the ratio between
  $\sum d_v^2$ and the product of the numbers of vertices and edges indeed
  tends to 1 as $n \rightarrow \infty$.
\end{proof}

\section{Caccetta-H\"aggkvist upper bound for diameter 2}

In this section, we prove Theorem \ref{thm:CHd2-improve}, which improves
the upper bound on the constant for the Caccetta-H\"aggkvist problem in the
diameter-2 setting.  Let $c$ and $N$ represent sufficiently small and sufficiently large absolute
constants, respectively, throughout this section. We will make a series of claims which
hold for large $N$ and positive constants $c_i$, where $c_0:=c$ and $c_i$ is a function
of variables $c_0,c_1,...,c_{i-1}$ tending to $0$ as $c\to 0$. The eventual
values of $N$ and $c_i$ can be explicitly calculated, although some of them will
not be expressed in the proof to keep the main ideas clean.

Let $G$ be an arbitrary diameter-2-critical graph with $n$ vertices and $m$
edges, where $n\ge N$. We will show that
\begin{align}
  \label{ineq:6/5-c}
  \sum_{v\in V(G)} d_v^2\le \left(\frac{6}{5}-c\right) nm .
\end{align}
We use the notion of 2-critical paths from the beginning of Section \ref{sec:ch-diam-hi}, 
and following that section, we also define
$\mathcal{P}(e)$ as the set of all 2-critical paths $P_{xy}$ such that
$\{x, y\}$ is 2-associated with the edge $e$.  Since our diameter is always 2 in
this section, we will write \emph{associated}\/ and \emph{critical path}\/
to refer to the concepts of 2-associated and 2-critical path.

\begin{defi}
  \label{defi:foot}
  Let $T \in \mathcal{T}_3$ be a triangle.  We say that a vertex $v \not
  \in T$ is a \emph{foot}\/ of $T$ if there are some $x, y \in T$ such that
  the path $vxy$ belongs to $\mathcal{P}(xy)$.
\end{defi}

\begin{lem}
  \label{lem:feet>=2}
  Every triangle $T \in \mathcal{T}_3$ has at least 2 feet, and if $v$ is a
  foot of $T$, then it is adjacent to exactly one vertex of $T$.
\end{lem}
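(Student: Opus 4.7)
The plan is to derive both parts of the lemma from a single observation: if $vxy \in \mathcal{P}(xy)$, then $\{v,y\}$ is 2-associated with $xy$, which forces $d_G(v,y)=2$ and places $xy$ on every shortest $(v,y)$-path, so no alternative $(v,y)$-walk of length at most $2$ may avoid $xy$. I would handle the adjacency claim first. Starting from a foot $v$ of $T=\{x,y,z\}$ witnessed by $vxy\in\mathcal{P}(xy)$, the edge $vx$ appearing in the path gives $v\sim x$; if $v\sim y$, then $d_G(v,y)=1$, contradicting $d_G(v,y)=2$; and if $v\sim z$, then $vzy$ is a $(v,y)$-path of length $2$ avoiding $xy$, again contradicting 2-association. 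Thus $v$ has exactly one neighbor in $T$, namely $x$.

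For the two-feet claim, I would invoke diameter-2-criticality to obtain critical paths $P_1\in\mathcal{P}(xy)$, $P_2\in\mathcal{P}(yz)$, $P_3\in\mathcal{P}(xz)$, each necessarily of length $2$ since the diameter is $2$. Each $P_i$ must carry its associated triangle edge as one of its two edges, so after reversing if necessary, $P_i$ takes the form $v_i a b$ with $ab$ the associated edge of $T$. This immediately exhibits each endpoint $v_i$ as a foot of $T$, producing three feet $v_1,v_2,v_3$ (one per triangle edge).

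It then remains to verify that at least two of $v_1,v_2,v_3$ are distinct, which is where the only real bookkeeping lies. By the adjacency claim each $v_i$ has a unique neighbor in $T$, so $v_i=v_j$ is only possible if they share that lone $T$-neighbor. Reading off the candidates from the construction, the unique $T$-neighbor of $v_1$ lies in $\{x,y\}$, that of $v_2$ in $\{y,z\}$, and that of $v_3$ in $\{x,z\}$. Since $\{x,y\}\cap\{y,z\}\cap\{x,z\}=\emptyset$, no single vertex of $T$ serves as candidate neighbor for all three $v_i$, so at least two of the feet must have neighbors at different vertices of $T$ and are therefore distinct. I expect this short combinatorial check to be the main (and essentially only) obstacle; the rest is a direct unpacking of the 2-association condition.
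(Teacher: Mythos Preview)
Your proof is correct and follows essentially the same approach as the paper: establish the adjacency claim first via the 2-association condition, then use it to distinguish feet. The only difference is that for the two-feet claim the paper is a bit more direct: after obtaining a foot $v$ from $\mathcal{P}(xy)$ and showing $v\not\sim y,z$, it simply takes any path in $\mathcal{P}(yz)$ and notes its outside vertex cannot be $v$ (since $v$ has no neighbor in $\{y,z\}$), so two edges of $T$ already suffice rather than three plus the intersection argument.
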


\begin{proof}
  Consider any triangle $T:=\{x,y,z\}\in \mathcal{T}_3$. For any edge (say
  $xy$) in $T$, every path $P \in \mathcal{P}(xy)$ must have length 2
  (suppose it is $P = vxy$).  Since the removal of $xy$ is supposed to
  increase the distance between $v$ and $y$ above 2, we must have
  $vy,vz\notin E(G)$, as claimed.  Now consider $\mathcal{P}(yz)$.  This
  must contain a path of length 2, and the outside vertex cannot be $v$
  because $v$ is adjacent to neither of $\{y, z\}$, so it produces another
  foot of $T$.
\end{proof}

\begin{defi}
  Given a triangle $T \in \mathcal{T}_3$, let $\mathcal{F}(T)$ be the
  collection of all triples $\{v,y,z\}$ in $\mathcal{T}_1$ where $v$ is a
  foot of $T$, both $y$ and $z$ are in $T$, and $v$ is not adjacent to
  either of $\{y, z\}$.
\end{defi}

\begin{lem}
  \label{lem:feet-unique}
  For distinct triangles $T,T'\in \mathcal{T}_3$, $\mathcal{F}(T)$ and
  $\mathcal{F}(T')$ are disjoint.
\end{lem}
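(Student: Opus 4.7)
The plan is to proceed by contradiction: suppose some triple $\{v,a,b\}$ lies in both $\mathcal{F}(T)$ and $\mathcal{F}(T')$ for distinct triangles $T\neq T'$. Since any member of $\mathcal{F}(T)$ lies in $\mathcal{T}_1$ and $v$ is adjacent to neither $a$ nor $b$, the unique edge of this triple must be $ab$. This edge therefore lies in both $T$ and $T'$, so we may write $T=\{x,a,b\}$ and $T'=\{x',a,b\}$ with $x\neq x'$. Lemma \ref{lem:feet>=2} tells us that $v$ is adjacent to exactly one vertex of $T$; as $v$ is nonadjacent to $a$ and $b$, this neighbor must be $x$. The same argument applied to $T'$ forces $vx'\in E(G)$.

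Next I would extract the key uniqueness consequence of being a foot. By Definition \ref{defi:foot}, there exist $u,w\in T$ with $vuw\in\mathcal{P}(uw)$. Since $v$'s only neighbor in $T$ is $x$, we get $u=x$, so $w\in\{a,b\}$; without loss of generality $vxa\in\mathcal{P}(xa)$. Because any critical path has length $2$ and removing $xa$ raises $d(v,a)$ strictly above $2$, the vertex $x$ must be the \emph{unique} common neighbor of $v$ and $a$ in $G$. Running the same reasoning inside $T'$ produces either that $x'$ is the unique common neighbor of $v$ and $a$, or that $x'$ is the unique common neighbor of $v$ and $b$.

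To finish, I would play the two triangles against each other using the shared edge $ab$. In the first case ($x'$ is the unique common neighbor of $v,a$), uniqueness forces $x=x'$, a contradiction. In the second case ($x'$ is the unique common neighbor of $v,b$), I would note that $x'a$ is an edge of the triangle $T'$ and $vx'$ is already known to be an edge, so $x'$ is a common neighbor of $v$ and $a$; combined with $x$ being the unique such common neighbor (from the analysis in $T$), we again conclude $x=x'$, contradicting $T\neq T'$.

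The main obstacle, and the place where the argument needs the most care, is the second case above: here the two triangles give uniqueness statements about \emph{different} pairs ($\{v,a\}$ versus $\{v,b\}$), and one has to observe that the triangle edge $x'a$ lets one transport the uniqueness from one pair to the other. Everything else (handling the symmetry $a\leftrightarrow b$, ruling out alternative neighbors of $v$ in $T$) is essentially bookkeeping on top of Lemma \ref{lem:feet>=2} and Definition \ref{defi:foot}.
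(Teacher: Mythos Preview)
Your proof is correct and follows essentially the same approach as the paper's: assume a common triple $\{v,a,b\}$, deduce that the shared edge $ab$ forces $T=\{x,a,b\}$ and $T'=\{x',a,b\}$ with $vx,vx'\in E(G)$, and then use the $2$-association to reach a contradiction. The paper streamlines the endgame by skipping your case split: once $vxa\in\mathcal{P}(xa)$, it simply notes that $vx'a$ is a length-$2$ $(v,a)$-path avoiding $xa$ (since $vx'$ and $x'a$ are edges regardless of which foot-path $T'$ supplies), which is already the contradiction---so your Case~1/Case~2 distinction, while correct, is not needed.
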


\begin{proof}
  Assume for contradiction that $\{v,y,z\}\in \mathcal{F}(T)\cap
  \mathcal{F}(T')$ such that $yz\in E(G)$.  Then $T=\{x,y,z\}$ and
  $T'=\{x',y,z\}$ for distinct vertices $x,x'$. Without loss of generality,
  assume that $vxy$ is a critical path in $\mathcal{P}(xy)$.  However,
  $vx'y$ is also an $(v,y)$-path of length 2, which does not contain
  the edge $xy$, a contradiction.
\end{proof}

\begin{lem}
  Let $\mathcal{T}_3^*$ be the set of triangles with at least three feet.
  To prove \eqref{ineq:6/5-c} and hence Theorem \ref{thm:CHd2-improve}, 
  it suffices to establish any one of the following two conditions:
  \begin{displaymath}
    {\bf (C1).}
    \quad
    |\mathcal{T}_2| \ge \frac{5c}{2}\cdot nm
    \quad\quad\quad\quad
    {\bf (C2).}
    \quad
    |\mathcal{T}_3^*| \ge \frac{5c}{6}\cdot nm.
  \end{displaymath}
\end{lem}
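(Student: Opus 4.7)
The plan is to convert inequality~\eqref{ineq:6/5-c} into a purely combinatorial inequality among $|\mathcal{T}_1|$, $|\mathcal{T}_2|$, $|\mathcal{T}_3|$, and $|\mathcal{T}_3^*|$, using the three identities~\eqref{ident:e(n-2)}--\eqref{ident:deg2-en} already established in Section~\ref{sec:ch-diam-hi}, and then read off the slack provided by either \textbf{(C1)} or \textbf{(C2)}.

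First I would combine Lemmas~\ref{lem:feet>=2} and~\ref{lem:feet-unique}. Each foot $v$ of a triangle $T$ is adjacent to exactly one vertex of $T$ and so produces exactly one triple $\{v,y,z\}\in\mathcal{F}(T)$ (where $y,z$ are the two non-neighbors of $v$ in $T$). Hence $|\mathcal{F}(T)|$ equals the number of feet of $T$: at least $2$ for every $T\in\mathcal{T}_3$, and at least $3$ for every $T\in\mathcal{T}_3^*$. Since by Lemma~\ref{lem:feet-unique} the sets $\mathcal{F}(T)$ are pairwise disjoint subsets of $\mathcal{T}_1$,
\begin{displaymath}
  |\mathcal{T}_1| \;\ge\; \sum_{T\in\mathcal{T}_3} |\mathcal{F}(T)| \;\ge\; 2|\mathcal{T}_3| + |\mathcal{T}_3^*|.
\end{displaymath}
Plugging this into~\eqref{ident:deg2-en} yields $\sum_v d_v^2 - mn \le |\mathcal{T}_3| - |\mathcal{T}_3^*|$. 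Next I would substitute the same lower bound on $|\mathcal{T}_1|$ into~\eqref{ident:e(n-2)} to obtain $5|\mathcal{T}_3| + 2|\mathcal{T}_2| + |\mathcal{T}_3^*| \le m(n-2) \le mn$, so
\begin{displaymath}
  |\mathcal{T}_3| \;\le\; \frac{mn - 2|\mathcal{T}_2| - |\mathcal{T}_3^*|}{5}.
\end{displaymath}

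Combining the two bounds gives
\begin{displaymath}
  \sum_{v\in V(G)} d_v^2 \;\le\; mn + \frac{mn - 2|\mathcal{T}_2| - |\mathcal{T}_3^*|}{5} - |\mathcal{T}_3^*| \;=\; \frac{6}{5}\,nm \;-\; \frac{2|\mathcal{T}_2| + 6|\mathcal{T}_3^*|}{5},
\end{displaymath}
so~\eqref{ineq:6/5-c} is equivalent to $2|\mathcal{T}_2| + 6|\mathcal{T}_3^*| \ge 5c\cdot nm$, which follows immediately from either hypothesis \textbf{(C1)} or \textbf{(C2)}. I do not anticipate a real obstacle here: the structural Lemmas~\ref{lem:feet>=2} and~\ref{lem:feet-unique} already do the work of comparing $|\mathcal{T}_1|$ with $|\mathcal{T}_3|$, and the remainder is straightforward algebra; the genuine content of the lemma is simply to identify $|\mathcal{T}_2|$ and $|\mathcal{T}_3^*|$ as the two ``surplus'' quantities whose largeness will be dealt with case-by-case in the rest of Section~4.
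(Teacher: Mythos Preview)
Your argument is correct and follows essentially the same route as the paper: use Lemmas~\ref{lem:feet>=2} and~\ref{lem:feet-unique} to get $|\mathcal{T}_1|\ge 2|\mathcal{T}_3|+|\mathcal{T}_3^*|$, feed this into identities~\eqref{ident:e(n-2)} and~\eqref{ident:deg2-en}, and read off the surplus. The only quibble is the word ``equivalent'' in your penultimate display---since you have already discarded slack via inequalities, you should say that~\eqref{ineq:6/5-c} \emph{follows from} $2|\mathcal{T}_2|+6|\mathcal{T}_3^*|\ge 5c\,nm$, not that it is equivalent to it.
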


\begin{proof}
  By Lemma \ref{lem:feet-unique} and Lemma \ref{lem:feet>=2}, we have
  \begin{align}
    \label{ineq:T1T3}
    |\mathcal{T}_1|
    \ge
    \sum_{T\in \mathcal{T}_3} |\mathcal{F}(T)|
    \ge
    2|\mathcal{T}_3|+|\mathcal{T}_3^*|.
  \end{align}
  By \eqref{ident:e(n-2)} and \eqref{ineq:T1T3}, we see that
  \begin{align}
    \nonumber
    m(n-2) &= 3|T_3| + 2|T_2| + |T_1| \\
    \nonumber
    mn &\ge 3|T_3| + 2|T_2| + (2|T_3| + |T_3^*|) + 2m \\
    \label{ineq:mn>=5t3}
    mn &\ge 5|\mathcal{T}_3|+|\mathcal{T}_3^*|+2|\mathcal{T}_2| + 2m .
  \end{align}
  Also, by doubling \eqref{ident:deg2}, we find
  \begin{align}
    \nonumber
    \sum_v d_v^2 - \sum_v d_v
    &= 6|T_3| + 2|T_2| \\
    \nonumber
    \sum_v d_v^2 &= 6|T_3| + 2|T_2| + 2m \\
    \label{ineq:dv2<=t3}
    \sum_v d_v^2 &\le mn + |T_3| - |T_3^*| ,
  \end{align}
  where we used \eqref{ineq:mn>=5t3} for the last deduction.

  Now suppose (C1) holds. Then by \eqref{ineq:mn>=5t3}, we have $mn\ge
  5|\mathcal{T}_3|+5c nm$ and thus \eqref{ineq:dv2<=t3} implies that
  $\sum_{v}d_v^2-nm\le |\mathcal{T}_3|\le (1/5-c) nm$, giving
  \eqref{ineq:6/5-c}.  On the other hand, if (C2) holds, by
  \eqref{ineq:dv2<=t3} we have $\sum_{v}d_v^2-nm\le
  |\mathcal{T}_3|-\frac{5c}{6}\cdot nm$.  To compare $|\mathcal{T}_3|$ with
  $nm$, we use \eqref{ineq:mn>=5t3} to obtain $nm\ge
  5|\mathcal{T}_3|+\frac{5c}{6}\cdot nm$, and thus $\sum_{v}d_v^2-nm \le
  (1/5-c) nm$, which again implies \eqref{ineq:6/5-c}.
\end{proof}

Our objective is now to show that at least one of (C1) and (C2) always
holds, unless \eqref{ineq:6/5-c} holds directly.  To this end, we first show 
that almost all edges have endpoints with similar neighborhoods.  
For the remainder of this section, we write $A\Delta B$ to denote 
the symmetric difference of sets $A$ and $B$.

\begin{lem}
  Define $c_1:=c^{1/4}$, and let $\mathcal{E}_1$ be the set of all edges
  $uv$ such that $|N_u \Delta N_v| \le c_1n$. If (C1) does not hold, then
  $|\mathcal{E}_1| \ge (1-c_1)m$.
  \label{lem:CHd2upClaim1}
\end{lem}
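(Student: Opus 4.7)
The plan is to prove the contrapositive: assuming that more than $c_1 m$ edges $uv$ of $G$ satisfy $|N_u \Delta N_v| > c_1 n$, I will produce enough triples in $\mathcal{T}_2$ to force condition (C1). The entire argument will be a double count of pairs (edge, outside vertex) generating $\mathcal{T}_2$-triples, and the choice $c_1 = c^{1/4}$ is dictated by the fact that the resulting lower bound on $|\mathcal{T}_2|$ will be quadratic in $c_1$, whereas (C1) is linear in $c$.

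First I would record the local observation driving the count. For any edge $e = uv$ with $|N_u \Delta N_v| > c_1 n$, and any $w \in (N_u \Delta N_v) \setminus \{u,v\}$, exactly one of $uw$, $vw$ is present while the other is absent, so the induced triple $\{u, v, w\}$ contains precisely two edges and lies in $\mathcal{T}_2$. Since $u \in N_v \setminus N_u$ and $v \in N_u \setminus N_v$ account for at most two elements of the symmetric difference, each such edge contributes at least $c_1 n - 2 \ge c_1 n / 2$ triples to $\mathcal{T}_2$, once $n \ge N$ is taken large.

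Next I would set up the double count. Writing $\mathcal{E}_1^c := E(G) \setminus \mathcal{E}_1$, consider the set
\[
X = \{(e, w) : e = uv \in \mathcal{E}_1^c,\ w \in (N_u \Delta N_v) \setminus \{u, v\}\}.
\]
The previous step gives $|X| \ge |\mathcal{E}_1^c| \cdot c_1 n / 2$. In the other direction, every pair in $X$ determines the triple $\{u, v, w\} \in \mathcal{T}_2$, and since such a triple has exactly two edges meeting at a unique apex vertex, at most two pairs of $X$ can project onto any given triple. Combining these yields $|\mathcal{T}_2| \ge |\mathcal{E}_1^c| \cdot c_1 n / 4$.

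Plugging in the hypothesis $|\mathcal{E}_1^c| > c_1 m$ then gives $|\mathcal{T}_2| > c_1^2 nm / 4 = c^{1/2} nm / 4$, which exceeds $(5c/2)nm$ as soon as $c$ is sufficiently small (concretely, $c \le 1/100$), triggering (C1) and contradicting the hypothesis of the lemma. The main obstacle is really psychological rather than technical: one must resist invoking any diameter-critical structure at this stage, since the raw symmetric-difference assumption by itself already supplies enough $\mathcal{T}_2$-triples. The only bookkeeping points are to exclude $u, v$ from the symmetric difference (a constant loss absorbed into $c_1 n - 2 \ge c_1 n / 2$) and to track the factor of $2$ from the two edges of each $\mathcal{T}_2$-triple; both are comfortably accommodated by the exponent $1/4$, which makes the quadratic gain $c_1^2 = c^{1/2}$ dominate the linear threshold $5c/2$.
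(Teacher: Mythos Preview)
Your proof is correct and follows essentially the same contrapositive double-counting argument as the paper: both observe that each $w \in N_u \Delta N_v$ yields a triple $\{u,v,w\} \in \mathcal{T}_2$, sum over the $>c_1 m$ bad edges, and divide by $2$ for overcounting to force (C1). You are simply a bit more careful with the bookkeeping (explicitly excluding $u,v$ from the symmetric difference and absorbing the loss via $c_1 n - 2 \ge c_1 n/2$), which costs an extra factor of $2$ in the final bound but is still comfortably enough since $c^{1/2}/4 > 5c/2$ for small $c$.
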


\begin{proof}
  Suppose on the contrary that there are at least $c_1 m$ edges $uv$
  satisfying $|N_u\Delta N_v|> c_1n$, but (C1) does not hold. Note that for
  any vertex $w\in N_u\Delta N_v$, the set $\{w,u,v\}\in \mathcal{T}_2$.
  Then $|\mathcal{T}_2|\ge \frac{(c_1m)(c_1n)}{2}> \frac{5c}{2}\cdot
  mn$, and so (C1) holds, contradiction.
\end{proof}

\begin{lem}
  Let $\mathcal{E}_2$ be the set of edges $uv \in \mathcal{E}_1$ such that
  $u$ and $v$ have at least $(1/2+c_1)n$ common neighbors.  If (C1) does not hold and neither does \eqref{ineq:6/5-c},
  then $|\mathcal{E}_2|\ge c_1m$.
  \label{lem:CHd2upClaim2}
\end{lem}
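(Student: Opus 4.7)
The plan is to derive the bound on $|\mathcal{E}_2|$ directly, by upper-bounding $\sum_v d_v^2 = \sum_{uv\in E}(d_u+d_v)$ and confronting it with the hypothesis that \eqref{ineq:6/5-c} fails. The key elementary identity is that for every edge $uv$,
\begin{displaymath}
  d_u+d_v \;=\; 2|N_u\cap N_v|+|N_u\Delta N_v|,
\end{displaymath}
which is just inclusion--exclusion. So the edge-degree of $uv$ is controlled by the two quantities that define $\mathcal{E}_1$ and $\mathcal{E}_2$.

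Next I would split the edge set into three classes and estimate $d_u+d_v$ on each. On $\mathcal{E}_2$ the trivial bound $d_u+d_v\le 2n$ is used. On $\mathcal{E}_1\setminus \mathcal{E}_2$ we have $|N_u\Delta N_v|\le c_1n$ (by membership in $\mathcal{E}_1$) and $|N_u\cap N_v|<(\tfrac12+c_1)n$ (by failing $\mathcal{E}_2$), so the identity above yields $d_u+d_v<(1+3c_1)n$. On $E\setminus \mathcal{E}_1$ the trivial bound $2n$ is used, together with the cardinality estimate $|E\setminus\mathcal{E}_1|\le c_1m$ from Lemma \ref{lem:CHd2upClaim1} (which applies because (C1) is assumed to fail).

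Summing gives
\begin{displaymath}
  \sum_v d_v^2 \;\le\; 2n\,|\mathcal{E}_2|+(1+3c_1)n\bigl(m-|\mathcal{E}_2|-|E\setminus\mathcal{E}_1|\bigr)+2n\,|E\setminus\mathcal{E}_1|.
\end{displaymath}
Using $|E\setminus\mathcal{E}_1|\le c_1 m$ and the hypothesis $\sum_v d_v^2>(\tfrac65-c)nm$ (the negation of \eqref{ineq:6/5-c}), I would divide through by $n$, collect the $|\mathcal{E}_2|$ coefficient $(1-3c_1)$, and rearrange to obtain
\begin{displaymath}
  |\mathcal{E}_2|\;>\;\frac{\tfrac15-c-4c_1+3c_1^2}{1-3c_1}\,m.
\end{displaymath}
Since $c_1=c^{1/4}$ tends to $0$ with $c$, this quantity is essentially $m/5$, which is vastly larger than $c_1m$ once $c$ is small enough; in particular $|\mathcal{E}_2|\ge c_1m$ as claimed.

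The step requiring the most care is the bound $d_u+d_v<(1+3c_1)n$ on $\mathcal{E}_1\setminus\mathcal{E}_2$, because this is where the two defining parameters of $\mathcal{E}_1$ and $\mathcal{E}_2$ are simultaneously used; the rest is bookkeeping chosen so that the coefficient $(1-3c_1)$ in front of $|\mathcal{E}_2|$ stays bounded away from $0$. Notably, the conclusion $|\mathcal{E}_2|\ge c_1 m$ is far weaker than what the estimate delivers, which suggests that subsequent arguments in the paper will exploit the much stronger density of $\mathcal{E}_2$ inside $\mathcal{E}_1$.
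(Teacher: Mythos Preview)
Your argument is correct and is essentially the same as the paper's: both use the identity $d_u+d_v=2|N_u\cap N_v|+|N_u\Delta N_v|$, split $E(G)$ into $\mathcal{E}_2$, $\mathcal{E}_1\setminus\mathcal{E}_2$, and $E\setminus\mathcal{E}_1$, bound the edge-degree on each piece exactly as you do, and confront the result with the failure of \eqref{ineq:6/5-c}. The only cosmetic difference is that the paper phrases it as a proof by contradiction (assume $|\mathcal{E}_2|<c_1m$ and deduce $\sum_v d_v^2\le(1+7c_1)mn<(\tfrac65-c)mn$), whereas you solve directly for $|\mathcal{E}_2|$; your slightly sharper constants and the observation that the bound is really of order $m/5$ are correct, though the paper only ever uses the weaker $c_1m$ downstream.
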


\begin{proof}
  Suppose for contradiction that $|\mathcal{E}_2|< c_1m$. By Lemma \ref{lem:CHd2upClaim1}, 
  it holds that $|\mathcal{E}_1|\ge (1-c_1)m$, and so, for sufficiently small constant $c$, we have
  \begin{align*}
    \sum_{v}d_v^2
    &=
    \sum_{uv\in E(G)}(d_u+d_v)=\sum_{uv\in E(G)}(|N_u\Delta N_v|+2|N_u\cap N_v|)\\
    &\le \sum_{uv\in \mathcal{E}_1}(c_1n+2|N_u\cap N_v|)+\sum_{uv\notin \mathcal{E}_1}2n\\
    &\le 3c_1 mn+2\sum_{uv\in \mathcal{E}_2}|N_u\cap N_v|+2\sum_{uv\in
      \mathcal{E}_1 \setminus \mathcal{E}_2}|N_u\cap N_v|\\
      &\le 3c_1 mn+2c_1mn+2m(1/2+c_1)n=(1+7c_1)
      mn<\left(\frac{6}{5}-c\right) mn,
  \end{align*}
  that is, \eqref{ineq:6/5-c} holds, contradicting the assumption. 
\end{proof}

\begin{lem}
  Define $c_2:=\sqrt{c}/4$.  If (C1) does not hold, then $m \ge
  \frac{c_1}{2}n^2$ and $|\mathcal{E}_2|\ge c_2 n^2$.
  \label{lem:CHd2upClaim3}
\end{lem}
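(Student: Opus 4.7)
The plan is to first observe that if \eqref{ineq:6/5-c} already holds, then the theorem we are trying to prove is established and this lemma is not needed, so I may freely assume \eqref{ineq:6/5-c} fails as well.  Under that extra assumption, Lemma \ref{lem:CHd2upClaim2} applies and gives $|\mathcal{E}_2|\ge c_1 m$.  The heart of the argument is then to promote this into a lower bound on $m$ of order $n^2$: once I have $m\ge \frac{c_1}{2}n^2$ in hand, the second conclusion is automatic, since
\begin{displaymath}
|\mathcal{E}_2|\ge c_1 m\ge c_1\cdot \frac{c_1}{2}n^2=\frac{\sqrt{c}}{2}n^2=2c_2 n^2\ge c_2 n^2.
\end{displaymath}

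The structural observation I would exploit is that every $\mathcal{E}_2$-edge pins both its endpoints into a small high-degree set.  Indeed, for $uv\in \mathcal{E}_2$ we have $N_u\cap N_v\subseteq N_u$ and $|N_u\cap N_v|\ge (\tfrac12+c_1)n$, which immediately forces $d_u\ge (\tfrac12+c_1)n$ (and symmetrically $d_v\ge (\tfrac12+c_1)n$).  Setting $V_{hi}:=\{v\in V(G):d_v\ge (\tfrac12+c_1)n\}$, every $\mathcal{E}_2$-edge therefore lies inside $V_{hi}$, so $|\mathcal{E}_2|\le \binom{|V_{hi}|}{2}$.  The standard degree-sum bound supplies the complementary estimate $|V_{hi}|\cdot (\tfrac12+c_1)n\le \sum_v d_v=2m$, i.e.\ $|V_{hi}|\le \frac{2m}{(\tfrac12+c_1)n}$.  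Sandwiching these with $|\mathcal{E}_2|\ge c_1 m$ yields a clean quadratic inequality
\begin{displaymath}
c_1 m\le |\mathcal{E}_2|\le \binom{|V_{hi}|}{2}\le \frac{|V_{hi}|^2}{2}\le \frac{2m^2}{(\tfrac12+c_1)^2 n^2},
\end{displaymath}
which one solves to obtain $m=\Omega(c_1 n^2)$.

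The main obstacle I foresee is nailing down the numerical constants exactly as stated.  A careless execution of the sandwich above produces only $m\ge (c_1/8)n^2$, which in turn yields $|\mathcal{E}_2|\ge (c_2/2)n^2$; recovering the stated constants $c_1/2$ and $c_2$ requires a more careful accounting.  Natural avenues for sharpening are (i) tightening the upper bound on $|\mathcal{E}_2|$ by using that $\mathcal{E}_2\subseteq \mathcal{E}_1$ supplies the extra structural information $|N_u\Delta N_v|\le c_1 n$, or (ii) exploiting that $V_{hi}$-vertices absorb essentially all of the degree mass, so that the inequality $|V_{hi}|(\tfrac12+c_1)n\le 2m$ is in fact close to tight.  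The conceptual backbone of the argument is robust, however: $\mathcal{E}_2$-edges can only live among the few high-degree vertices, so their sheer number forces $m$ itself to be quadratic in $n$.
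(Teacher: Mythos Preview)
Your approach is essentially the paper's: both identify the high-degree set $V_{hi}$ (the paper writes $V(H)$ for the vertices incident to an $\mathcal{E}_2$-edge), note that all $\mathcal{E}_2$-edges lie inside it so $|\mathcal{E}_2|\le \binom{|V_{hi}|}{2}$, and combine this with the degree lower bound $d_v\ge(\tfrac12+c_1)n$ for $v\in V_{hi}$ and with $|\mathcal{E}_2|\ge c_1 m$ from Lemma~\ref{lem:CHd2upClaim2}.

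Where you lose the constant is in the step $|V_{hi}|(\tfrac12+c_1)n\le 2m$, which double-counts edges inside $V_{hi}$.  The paper avoids this loss not via your suggested avenues (i) or (ii), but by a case split on $h:=|V_{hi}|/n$.  If $h\ge \tfrac14$ then already $m\ge \tfrac12\sum_{v\in V_{hi}} d_v\ge \tfrac{n^2}{16}\ge \tfrac{c_1}{2}n^2$ for small $c$.  If $h<\tfrac14$, the paper counts only edges from $V_{hi}$ to its complement: each $v\in V_{hi}$ has at least $(\tfrac12+c_1-h)n$ neighbours outside $V_{hi}$, and these edges are not double-counted, giving $m\ge h(\tfrac12+c_1-h)n^2$.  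Feeding this into your same sandwich $\tfrac{(hn)^2}{2}\ge |\mathcal{E}_2|\ge c_1 m$ forces $h\ge c_1$, and since $h\mapsto h(\tfrac12+c_1-h)$ is increasing on $[c_1,\tfrac14)$, one gets $m\ge c_1(\tfrac12+c_1-c_1)n^2=\tfrac{c_1}{2}n^2$ exactly.  Your conceptual backbone is right; the missing ingredient for the stated constant is just this out-edge count in place of the full degree sum.
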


\begin{proof}
  Let $H$ be the subgraph of $G$ spanned by the edges of $\mathcal{E}_2$,
  and let $h=|V(H)|/n$. Note that for $v\in V(H)$, $d_G(v)\ge (1/2+c_1)n$.
  If $h\ge \frac{1}{4}$, then $m\ge \frac{1}{2}\sum d_G(v)\ge
  \frac{n^2}{16}$ and thus by Lemma \ref{lem:CHd2upClaim2},
  $|\mathcal{E}_2|\ge c_1m\ge \frac{c_1}{16}n^2>c_2n^2$ as desired.

  It remains to consider $h< \frac{1}{4}$. Every $v \in V(H)$ has at least
  $(1/2+c_1-h) n$ neighbors out of $V(H)$, implying that $m\ge
  |V(H)|\cdot(1/2+c_1-h)n=h(1/2+c_1-h)n^2$.  Using Lemma
  \ref{lem:CHd2upClaim2}, we find $\frac{(hn)^2}{2}\ge |\mathcal{E}_2|\ge
  c_1m\ge c_1h(1/2+c_1-h)n^2$, which shows that $(h-c_1)(1+2c_1)\ge 0$.
  Thus $\frac{1}{4}> h\ge c_1$, implying that $m\ge \frac{c_1}{4}n^2$ and
  $|\mathcal{E}_2|\ge c_1^2n^2/4=c_2n^2$. 
\end{proof}

\begin{lem}
  Define $c_3:=40 \sqrt{c}$. If (C1) does not hold, then there exists an edge
  $uv\in \mathcal{E}_2$ such that $N_u\cap N_v$ induces at least
  $(1-c_3)\binom{|N_u\cap N_v|}{2}$ edges.
  \label{lem:CHd2upClaim4}
\end{lem}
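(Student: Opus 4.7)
The plan is to argue by contradiction: assume that for every edge $uv \in \mathcal{E}_2$, the induced subgraph $G[N_u \cap N_v]$ contains more than $c_3 \binom{|N_u \cap N_v|}{2}$ non-edges of $G$, and deduce that condition (C1) must then hold, contradicting the lemma's hypothesis. The heart of the argument will be a double count of the set
\begin{displaymath}
\mathcal{S} = \{ (uv, \{w, x\}) : uv \in \mathcal{E}_2,\ w, x \in N_u \cap N_v,\ wx \notin E(G) \}.
\end{displaymath}

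For the lower bound on $|\mathcal{S}|$, combining the contradiction hypothesis with $|N_u \cap N_v| \geq (1/2 + c_1) n$ (from the definition of $\mathcal{E}_2$) and with $|\mathcal{E}_2| \geq c_2 n^2$ from Lemma \ref{lem:CHd2upClaim3} yields
\begin{displaymath}
|\mathcal{S}| > |\mathcal{E}_2| \cdot c_3 \binom{(1/2 + c_1) n}{2} \geq (1 - o(1)) \frac{c_2 c_3}{8} n^4 .
\end{displaymath}
For the upper bound, each $(uv, \{w, x\}) \in \mathcal{S}$ produces two triples in $\mathcal{T}_2$, namely $\{u, w, x\}$ (centered at $u$) and $\{v, w, x\}$ (centered at $v$), since $uw, ux, vw, vx \in E(G)$ while $wx \notin E(G)$. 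Conversely, a triple $T = \{u_0, w, x\} \in \mathcal{T}_2$ with center $u_0$ can only arise in this way from an edge $u_0 v \in \mathcal{E}_2$ with $v \in N_w \cap N_x$, and there are at most $n$ such $v$. This yields $2 |\mathcal{S}| \leq n \cdot |\mathcal{T}_2|$.

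Combining the two inequalities with $c_2 c_3 = (\sqrt{c}/4)(40 \sqrt{c}) = 10 c$ produces $|\mathcal{T}_2| \geq (1 - o(1)) (5c/2) n^3$. Since $m \leq n^2 / 2$, the right-hand side of (C1) satisfies $(5c/2) n m \leq (5c/4) n^3$, which is strictly smaller than our bound on $|\mathcal{T}_2|$ for large $n$. This establishes (C1), contradicting the hypothesis of the lemma. The main delicate point is the calibration of the constants: the choice $c_3 = 40 \sqrt{c}$ makes $c_2 c_3 = 10 c$, precisely twice the $5c/2$ threshold required by (C1) after the trivial bound $m \leq n^2/2$, and this factor-of-two slack is exactly what absorbs the $(1 - o(1))$ loss from approximating $\binom{(1/2 + c_1) n}{2}$ by $n^2 / 8$.
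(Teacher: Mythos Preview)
Your proof is correct and follows the same underlying idea as the paper: under the contradiction hypothesis, convert the many non-adjacent pairs inside $N_u \cap N_v$ (for $uv \in \mathcal{E}_2$) into triples of $\mathcal{T}_2$, and show this forces (C1).

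The only difference is in how overcounting is handled. The paper first extracts from $\mathcal{E}_2$ a matching $M$ of size at least $\frac{c_2}{2}n$ (greedily, using $|\mathcal{E}_2|\ge c_2 n^2$); since the endpoints of matching edges are all distinct, the triples $\{u,x,y\}$ and $\{v,x,y\}$ produced from different matching edges are automatically distinct, and one obtains directly
\[
|\mathcal{T}_2|\ \ge\ 2c_3\binom{(1/2+c_1)n}{2}\,|M|\ \ge\ \frac{c_2 c_3}{8}\,n^3\ \ge\ \frac{c_2 c_3}{4}\,mn\ =\ \frac{5c}{2}\,mn.
\]
Your version instead keeps all of $\mathcal{E}_2$ and controls the overcounting via the double count $2|\mathcal{S}|\le n\,|\mathcal{T}_2|$; since $|\mathcal{E}_2|\ge c_2 n^2$ is a factor of roughly $2n$ larger than $|M|$, the extra factor of $n$ in the overcount cancels and you recover the same (in fact slightly stronger) bound. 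Both routes land on $c_2 c_3 = 10c$ and compare against $\frac{5c}{2}mn$ via $m\le n^2/2$, so the numerics are identical.
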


\begin{proof}
  By Lemma \ref{lem:CHd2upClaim3}, there exists a matching $M$ with edges
  from $\mathcal{E}_2$ of size at least $\frac{c_2}{2}n$.  Suppose for
  contradiction that for every edge $uv\in M$, $N_u\cap N_v$ has at least
  $c_3\binom{|N_u\cap N_v|}{2}$ non-adjacent pairs.  Note that any
  non-adjacent pair $\{x,y\}$ in $N_u\cap N_v$ contributes two triples
  $\{u,x,y\}$ and $\{v,x,y\}$ to $\mathcal{T}_2$.  Therefore, we see that
  \begin{displaymath}
    |\mathcal{T}_2|\ge 2 c_3\binom{(1/2+c_1)n}{2} |M|\ge
  \frac{c_2c_3}{8}n^3\ge \frac{c_2c_3}{4}mn\ge \frac{5c}{2}mn,
  \end{displaymath}
  that is, (C1) holds.
\end{proof}

\begin{lem}
  Define $c_4:=\sqrt{40}c^{1/4}$, suppose (C1) does not hold, and let $uv$
  be the edge from Lemma \ref{lem:CHd2upClaim4}. Then $G[N_u\cap N_v]$ has
  an induced subgraph $D$ with at least $(1/2-c_4) n$ vertices and minimum
  degree at least $(1-c_4) |D|$.
  \label{lem:CHd2upClaim5}
\end{lem}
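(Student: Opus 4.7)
The plan is to extract $D$ from the common-neighborhood set $S := N_u \cap N_v$ by a standard peeling (or cleaning) argument. Lemma \ref{lem:CHd2upClaim2} gives $|S| \ge (1/2 + c_1)n$, and Lemma \ref{lem:CHd2upClaim4} gives that $G[S]$ has at most $c_3 \binom{|S|}{2}$ non-edges. The goal is to iteratively delete from $S$ the vertices that still have too many non-neighbors inside the current set, so that what remains is both large and has high minimum degree.

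Concretely, I would fix a threshold $\alpha$ slightly smaller than $c_4$ and repeat the following: while the current subgraph contains a vertex whose number of non-neighbors in the current subgraph exceeds $\alpha$ times the current size, delete one such vertex. Let $D$ denote the final vertex set and $t := |S| - |D|$ the number of deletions. By the stopping rule, every vertex of $G[D]$ has degree at least $(1 - \alpha)|D| - 1 \ge (1 - c_4)|D|$, yielding the minimum-degree conclusion once $\alpha$ is chosen with enough slack below $c_4$.

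The quantitative heart of the argument is the bound on $t$. Each deletion destroys at least $\alpha(|S| - i)$ non-edges at step $i$, and the total supply of non-edges is at most $c_3 \binom{|S|}{2}$. As long as $t \le |S|/2$, each removed increment is at least $\alpha |S|/2$, so summing yields $t \le c_3 |S| / \alpha$. Since $c_3 = 40\sqrt{c}$ and $c_4 = \sqrt{40}\, c^{1/4}$ satisfy $c_3 = c_4^2$, choosing $\alpha$ comparable to $c_4$ forces $t \le (c_4 + o(1)) |S|$, which is well below $|S|/2$ for small enough $c$ and thus retroactively validates the assumption. Consequently
\begin{displaymath}
  |D| = |S| - t \ge (1 - c_4)(1/2 + c_1) n \ge (1/2 - c_4) n,
\end{displaymath}
as required.

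The part I expect to require the most care is purely bookkeeping: choosing the peeling threshold $\alpha$ in relation to $c_4$ so that both the size bound and the minimum-degree bound come out with the stated constant, and confirming that $t$ never reaches $|S|/2$ so that the linear lower bound on the increments remains valid throughout the process. No new information about the diameter-2-critical structure of $G$ is needed beyond the near-completeness of $G[S]$ already supplied by Lemmas \ref{lem:CHd2upClaim2} and \ref{lem:CHd2upClaim4}.
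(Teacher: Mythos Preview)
Your proposal is correct and matches the paper's proof essentially line for line: the paper also peels $N_u\cap N_v$ by repeatedly deleting vertices of degree below $(1-\sqrt{c_3})|D|$ (note $\sqrt{c_3}=c_4$), bounds the number of deletions via the total non-edge count $\tfrac{c_3}{2}|N_u\cap N_v|^2$, and uses $|D|\ge \tfrac12|N_u\cap N_v|$ throughout to conclude $|D|\ge (1-\sqrt{c_3})|N_u\cap N_v|\ge (1/2-c_4)n$. The only cosmetic difference is that the paper sets the threshold equal to $c_4$ outright rather than introducing an auxiliary $\alpha$, so your bookkeeping worry about choosing $\alpha$ disappears.
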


\begin{proof}
  Start with $D = G[N_u \cap N_v]$.  As long as $D$ has a vertex of degree
  less than $(1-\sqrt{c_3})|D|$, delete it.  At every stage of this
  procedure, $\sqrt{c_3}|D|\cdot(|N_u\cap N_v|-|D|)$ is at most the number
  of non-adjacent pairs in $N_u\cap N_v$.  Our assumption on $uv$ from
  Lemma \ref{lem:CHd2upClaim4} then implies that
  \begin{displaymath}
    \sqrt{c_3}|D|\cdot(|N_u\cap N_v|-|D|)
    \le
    \frac{c_3}{2}|N_u\cap N_v|^2
  \end{displaymath}
  must hold throughout the process.  It is clear that this holds when $D =
  N_u \cap N_v$, and since the function $f(x) = x(M-x)$ is a
  downward-opening parabola, the process must stop well before $D$ reaches
  $\frac{1}{2} |N_u \cap N_v|$.  So, throughout the process,
  \begin{align*}
    \sqrt{c_3}\frac{|N_u\cap N_v|}{2} \cdot(|N_u\cap N_v|-|D|)
    &\le
    \frac{c_3}{2}|N_u\cap N_v|^2 \\
    |N_u\cap N_v| - |D|
    &\le
    \sqrt{c_3} |N_u\cap N_v|,
  \end{align*}
  and we conclude that at the end, $|D|\ge
  \left(1-\sqrt{c_3}\right)|N_u\cap N_v|\ge (1/2-c_4)\cdot n$.
\end{proof}

\begin{defi}
  Let $D$ be the subgraph produced by Lemma \ref{lem:CHd2upClaim5}.  For
  each edge $e \in E(D)$, since the endpoints of $e$ form a triangle with
  $u$, there exists a path of length 2 in $\mathcal{P}(e)$, say $x'xy$
  where $xy = e$, $x' \notin D$, and $x'y \notin E(G)$.  We call any such
  $x'$ an \emph{arm} of edge $e$.  Define the digraph $\overrightarrow{D}$
  on $V(D)$ as follows: for any edge $xy \in E(D)$, we place a directed
  edge from $y$ to $x$ to $\overrightarrow{D}$ whenever $xy$ has an arm
  $x'$ such that $xx'\in E(G)$.
\end{defi}

Note that each edge in $D$ can produce either one or both directed edges in
$\overrightarrow{D}$, i.e., between each pair of vertices, there can be
zero, one, or two directed edges.

\begin{lem}
  Define $c_5:=5\sqrt{c_4}$, and suppose (C1) does not hold.  Let $S$ be
  the set of vertices in $\overrightarrow{D}$ with in-degree at least $4c_4
  |D|$. Then, for large $n\ge N$, $|S| > (1/2-c_5) n$.
  \label{lem:CHd2upClaim6}
\end{lem}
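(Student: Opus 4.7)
Plan: The plan is a proof by contradiction. Assume $|S|\le (1/2-c_5)n$ and set $B:=V(D)\setminus S$. Since $|V(D)|\ge (1/2-c_4)n$ and $c_5>c_4$, we have $|B|\ge (c_5-c_4)n$, a quantity of order $\sqrt{c_4}\,n$. For each bad vertex $x\in B$, the $\overrightarrow{D}$-in-degree of $x$ is less than $4c_4|D|$, while the minimum degree of $D$ guarantees at least $(1-c_4)|D|$ edges $xy\in E(D)$ at $x$. Hence at least $(1-5c_4)|D|$ of these edges carry no arm on the $x$-side, and each must therefore have some arm $y'\notin V(D)$ adjacent to $y$ with $xy'\notin E$ and $\{x,y'\}$ 2-associated with $xy$. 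This produces a triple $\{x,y,y'\}\in\mathcal{T}_2$; summing over bad vertices and bad edges (and allowing a factor of $2$ for the ambiguity of which of $x,y$ plays which role in the unordered triple) gives $|\mathcal{T}_2|\ge\tfrac{1}{2}|B|(1-5c_4)|D|=\Omega(\sqrt{c_4}\,n^2)$.

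The main obstacle is that this bound is far weaker than the bound $|\mathcal{T}_2|<\tfrac{5c}{2}nm=O(cn^3)$ implied by the failure of (C1) (recall $m\ge c_1n^2/2$ by Lemma \ref{lem:CHd2upClaim3}): the simple count above cannot by itself contradict $\lnot(C1)$, so the lower bound on $|\mathcal{T}_2|$ must be amplified.

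To do so, I would also count the \emph{failed} arm candidates on the $x$-side. For each bad edge $xy$ at a bad $x$ and each external $x'\in N(x)\setminus(V(D)\cup\{u,v\})$ with $x'y\notin E$, the vertex $x'$ is not an arm of $xy$, so the 2-association must fail: there exists some $z\neq x$ with $x'z,\,zy\in E$, yielding a further triple $\{x',z,y\}\in\mathcal{T}_2$. Aggregating over bad $x$, bad edges, and all such failed candidates, and carefully bounding the multiplicity of each resulting triple (for instance by restricting to external $x'$ with $|N(x')\cap V(D)|$ small, so each triple arises from at most a bounded number of $(x,y)$ pairs), should push the lower bound on $|\mathcal{T}_2|$ to the order $cn^3$ needed to contradict $\lnot(C1)$. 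Carrying out this amplification cleanly, particularly the control of multiplicities, is the principal technical challenge.
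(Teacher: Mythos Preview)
Your approach has a genuine gap, and more importantly, it is aimed at the wrong target. The amplification step you sketch (counting failed arm candidates and the resulting $\mathcal{T}_2$-triples $\{x',z,y\}$) is left entirely speculative: you neither bound below the number of such candidates $x'$ per bad edge (nothing in the setup controls $|N(x)\setminus V(D)|$), nor do you control the multiplicity with which a given triple $\{x',z,y\}$ is produced, and you yourself flag this as ``the principal technical challenge'' without resolving it. As written, the proposal is a plan with an unfilled hole at exactly the step that would make the numbers work.

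The deeper issue is that the lemma does not require invoking (C1) at all beyond its role in furnishing $D$ via the earlier lemmas. The paper's argument is a short double count \emph{inside} $\overrightarrow{D}$. Every edge of $D$ has at least one arm, hence contributes at least one directed edge to $\overrightarrow{D}$; consequently
\[
e\big(\overrightarrow{D}[S^c]\big)\ \ge\ e\big(D[S^c]\big)\ \ge\ \binom{|S^c|}{2}-\tfrac{c_4}{2}|D|^2,
\]
the last inequality coming from the minimum-degree condition on $D$. On the other hand, since every vertex of $S^c$ has in-degree below $4c_4|D|$,
\[
e\big(\overrightarrow{D}[S^c]\big)\ \le\ \sum_{v\in S^c} d^-(v)\ <\ 4c_4|D|\cdot |S^c|.
\]
Writing $s=|S^c|/n$ and using $|D|\le n$, these combine to $\tfrac{s^2}{2}\le \tfrac{c_4}{2}+4c_4 s+\tfrac{s}{2n}<5c_4$, so $s<4\sqrt{c_4}$ and $|S|\ge |D|-|S^c|>(1/2-c_5)n$. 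No $\mathcal{T}_2$-counting or amplification is needed; the point you missed is that the low in-degree condition on $S^c$ is already incompatible with the density of $D$, because arms force every $D$-edge to be oriented at least once.
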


\begin{proof}
  Let $S^c$ be the complement of $S$ in $V(D)$ and let $s=|S^c|/n$.  Let
  $d^-(v)$ denote the in-degree of $v$ in $\overrightarrow{D}$. By Lemma
  \ref{lem:CHd2upClaim5}, the number of non-adjacent pairs in
  $\overrightarrow{D}$ is less than $\frac{c_4}{2}|D|^2$, thus
  \begin{displaymath}
    \binom{sn}{2}- \frac{c_4}{2}|D|^2
    \le
    e(\overrightarrow{D}[S^c])
    \le
    \sum_{v\in S^c} d^-(v)\le 4c_4|D|\cdot sn.
  \end{displaymath}
  Thus, for large $n\ge N$, we have $\frac{s^2}{2} \le \frac{c_4}{2}+4c_4s +\frac{s}{2n}< 5c_4$, implying
  that $s < 4\sqrt{c_4}$. By Lemma \ref{lem:CHd2upClaim5}
  again, we see $|S|=|D|-|S^c|\ge (1/2-c_4-4\sqrt{c_4}) n > (1/2-c_5) n$.
\end{proof}

\begin{defi}
 For each $x\in S$ and $x'\notin V(D)$ satisfying $xx'\in E(G)$, define
 $A_{x'}(x)$ to the set of all vertices $y\in V(D)$ such that $x'xy \in
 \mathcal{P}(xy)$.  We say $x\in S$ is {\em rich} if there exists some
 $x'\notin V(D)$ such that $|A_{x'}(x)|\ge 2c_4|D|$.
\end{defi}

Observe that since any edge $\overrightarrow{yx} \in \overrightarrow{D}$
has an arm $x'\notin V(D)$ such that $x'xy\in \mathcal{P}(xy)$, the union
$\bigcup_{x'} A_{x'}(x)$ is just the in-neighborhood $N^-(x)$ of $x$ in
$\overrightarrow{D}$, which is of at least $4c_4 |D|$ by Lemma
\ref{lem:CHd2upClaim6}.

\begin{lem}
  Suppose (C1) does not hold, and let $X$ be the set of rich vertices in
  $S$. Then $|X|\ge (1-\sqrt{c}) |S|$ or (C2) holds.
  \label{lem:CHd2upClaim7}
\end{lem}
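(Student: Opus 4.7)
The plan is to suppose $|X| < (1-\sqrt{c})|S|$ and derive (C2). Under this assumption, there are more than $\sqrt{c}|S|$ non-rich vertices in $S$. For each non-rich $x$, Lemma \ref{lem:CHd2upClaim6} gives $|N^-(x)| \ge 4c_4|D|$, while non-richness forces every arm $x'$ to satisfy $|A_{x'}(x)| < 2c_4|D|$, so $N^-(x)$ cannot be concentrated on any single arm. I will construct many triangles in $\mathcal{T}_3^*$ from such $x$.

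For each non-rich $x$ and $y_1 \in N^-(x)$, fix an arm $x'$ with $y_1 \in A_{x'}(x)$. Pick any $y_2 \in N^-(x) \cap N_G(y_1)$ with $y_2 \notin A_{x'}(x) \cup \{y_1\}$; by the definition of $N^-(x)$, there is some arm $x'' \ne x'$ with $y_2 \in A_{x''}(x)$. I claim $T = \{x, y_1, y_2\}$ is a triangle in $\mathcal{T}_3^*$. Clearly $T$ is a triangle since $xy_1, xy_2, y_1 y_2 \in E(G)$. I exhibit three distinct feet $x', x'', w$. The paths $x'xy_1 \in \mathcal{P}(xy_1)$ and $x''xy_2 \in \mathcal{P}(xy_2)$ make $x'$ a foot via $xy_1$ and $x''$ a foot via $xy_2$; both lie outside $V(D) \supseteq T$ and are distinct by construction. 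The crucial non-adjacency $x' y_2 \notin E(G)$ is forced by criticality: otherwise $x' y_2 y_1$ would be a length-$2$ $(x', y_1)$-path avoiding $xy_1$, contradicting the association of $xy_1$ with $\{x', y_1\}$; symmetrically $x'' y_1 \notin E(G)$. Finally, since $G$ is diameter-$2$-critical, $\mathcal{P}(y_1 y_2)$ contains a length-$2$ critical path with outside vertex $w$; by Lemma \ref{lem:feet>=2}, $w$ is adjacent to exactly one of $\{y_1, y_2\}$ in $T$, so $w \ne x', x''$ (which are adjacent only to $x$ among $T$).

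For the count, Lemma \ref{lem:CHd2upClaim5} implies $|N^-(x) \cap N_G(y_1)| \ge |N^-(x)| - c_4|D|$, so after excluding the at most $2c_4|D|$ elements of $A_{x'}(x)$ and $y_1$ itself, at least $c_4|D|/2$ choices of $y_2$ remain for $n \ge N$. Summing over non-rich $x$ and $y_1 \in N^-(x)$ gives more than $\sqrt{c}|S| \cdot 4c_4|D| \cdot (c_4|D|/2)$ ordered triples, and each triangle in $\mathcal{T}_3^*$ is produced at most six times (three choices of the distinguished vertex, two orderings of the remaining pair). With $|S|, |D| \ge (1/2 - o(1))n$ and $c_4 = \sqrt{40}\,c^{1/4}$, so that $c_4^2 \sqrt{c} = 40c$, this yields $|\mathcal{T}_3^*| \ge (5c/3 - o(1))\,n^3 \ge (5c/6)\,nm$, establishing (C2). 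The main obstacle is the careful verification that the three constructed feet are genuinely distinct and that the invoked length-$2$ paths really lie in the requisite $\mathcal{P}(\cdot)$; both reduce to the elementary fact from Section \ref{sec:ch-diam-hi} that an associated edge lies on every shortest path between its critical pair.
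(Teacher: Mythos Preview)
Your proof is correct and follows essentially the same approach as the paper: assume more than $\sqrt{c}|S|$ non-rich vertices, use the fact that for each such $x$ the in-neighbourhood $N^-(x)$ has size at least $4c_4|D|$ but is spread over arms of size $<2c_4|D|$, and count triangles $\{x,y_1,y_2\}$ with $y_1,y_2$ coming from different $A_{x'}(x)$'s to produce enough members of $\mathcal{T}_3^*$ for (C2). The paper states the key observation as a displayed property \eqref{prop:formT3*} and counts unordered pairs (dividing by $3$), whereas you count ordered triples (dividing by $6$) and give a slightly weaker but still sufficient lower bound $c_4|D|/2$ on the number of $y_2$'s; the resulting numerics are the same. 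Your verification that the three feet $x',x'',w$ are genuinely distinct (via the non-adjacencies $x'y_2,\,x''y_1\notin E(G)$) is a bit more explicit than the paper's, which simply asserts the third foot exists ``by considering the edge $y_1y_2$''.
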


\begin{proof}
  Suppose for contradiction that there are at least $\sqrt{c} |S|$ vertices
  $x\in S$ such that $|A_{x'}(x)|<2c_4|D|$ for every $x'\notin V(D)$
  adjacent to $x$.  Fix such a vertex $x$. Consider any $y\in N^-(x)$, say
  $y\in A_{x_1}(x)$ for $x_1\notin V(D)$. By Lemma \ref{lem:CHd2upClaim5},
  $y$ is non-adjacent at most $c_4|D|$ vertices in $D$. Since $|N^-(x)|\ge
  4c_4|D|$ (by Lemma \ref{lem:CHd2upClaim6}), we see that there are more
  than $c_4|D|$ vertices $y'\in N^-(x) \setminus A_{x'}(x)$ such that
  $yy'\in E(D)$.

  We need the following property: for distinct $x_1,
  x_2\notin V(D)$,
  \begin{equation}
    \label{prop:formT3*}
    \text{If } y_1\in A_{x_1}(x) \text{ and } y_2\in A_{x_2}(x) \text{ such
    that } y_1y_2\in E(D), \text{ then }  \{y_1,y_2,x\}\in \mathcal{T}_3^*.
  \end{equation}
  To see this, it is clear that $\{y_1,y_2,x\}$ is in $\mathcal{T}_3$ and
  $x_1,x_2$ are two feet of it; we can find a third foot of $\{y_1,y_2,x\}$
  by considering the edge $y_1y_2$.

  By \eqref{prop:formT3*}, every such $\{y,y',x\}\in \mathcal{T}_3^*$, and
  there are at least $\frac{1}{2}|N^-(x)|\cdot c_4|D|$ pairs $\{y,y'\}$.
  Thus, for every such vertex $x$, there are at least
  $\frac{1}{2}|N^-(x)|\cdot c_4|D| > (c_4|D|)^2$ triples in
  $\mathcal{T}_3^*$ which contain $x$.  So, by Lemmas
  \ref{lem:CHd2upClaim5} and \ref{lem:CHd2upClaim6}, we have
  \begin{displaymath}
    |\mathcal{T}_3^*|
    \ge
    \frac{1}{3} \cdot \sqrt{c}|S|\cdot (c_4|D|)^2
    >
    \frac{1}{3} \sqrt{c}(1/2-c_5)c_4^2(1/2-c_4)^2n^3
    \ge
    \frac{\sqrt{c}}{13}\cdot c_4^2 mn
    >
    \frac{5c}{6}\cdot mn,
  \end{displaymath}
  that is, (C2) holds.
\end{proof}

\begin{lem}
  Suppose (C1) and (C2) do not hold.  For each vertex $x\in X$, choose and
  fix an adjacent vertex $x'\notin V(D)$ for which $|A_{x'}(x)|\ge
  2c_4|D|$.  Then, every such $x'$ is not adjacent to any vertex in its
  corresponding $X \setminus \{x\}$.
  \label{lem:CHd2upClaim8}
\end{lem}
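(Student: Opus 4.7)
The plan is to prove this by contradiction: I will suppose that the chosen partner $x'$ of some $x \in X$ is adjacent to some $z \in X \setminus \{x\}$, and derive a contradiction by showing that $z$ would then have too few neighbors inside $D$ to satisfy the minimum-degree bound from Lemma \ref{lem:CHd2upClaim5}.

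The crucial ingredient I would extract from the assumption $y \in A_{x'}(x)$ is the identity $N_{x'} \cap N_y = \{x\}$. Indeed, $x'xy \in \mathcal{P}(xy)$ means $\{x', y\}$ is associated with $xy$, so $d_{G-xy}(x', y) > 2$; this rules out the edge $x'y$ and rules out any common neighbor $w \neq x$ of $x'$ and $y$, since such a $w$ would yield a surviving $x'wy$ path of length $2$ after deleting $xy$. Applying this to the hypothesized edge $x'z$, I get $z \notin N_y$ for every $y \in A_{x'}(x)$.

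From here the argument finishes by pigeonhole. The set $A_{x'}(x)$ lies in $V(D)$, avoids $z$ itself (since $x'z \in E(G)$ forbids $z \in A_{x'}(x)$), and lies entirely outside $N_z$; hence $A_{x'}(x)$ is contained in the set of non-neighbors of $z$ inside $V(D) \setminus \{z\}$, which by Lemma \ref{lem:CHd2upClaim5} has size at most $c_4|D| - 1$. Richness of $x$, namely $|A_{x'}(x)| \ge 2c_4|D|$, contradicts this.

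The main (and essentially only) obstacle I anticipate is the first step: recognizing the strong structural content of a length-$2$ critical path, namely that the endpoints share the middle vertex as their sole common neighbor. Once that observation is in hand, neither (C1) nor (C2) is invoked further, and the conclusion follows purely from the near-completeness of $D$ established in Lemma \ref{lem:CHd2upClaim5}.
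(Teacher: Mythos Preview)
Your proof is correct and follows essentially the same approach as the paper. The paper supposes $x'$ is adjacent to some $y\in X\setminus\{x\}$, uses the minimum-degree bound on $D$ to find $z\in A_{x'}(x)$ with $yz\in E(D)$, and then notes that the path $zyx'$ contradicts $x'xz\in\mathcal{P}(xz)$; your version is the contrapositive of this same pigeonhole, packaged through the clean intermediate observation $N_{x'}\cap N_y=\{x\}$ for $y\in A_{x'}(x)$.
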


\begin{proof}
  Suppose for contradiction that $x'$ is adjacent to some $y\in X \setminus
  \{x\}$.  By Lemma \ref{lem:CHd2upClaim5}, $y$ is non-adjacent to at most
  $c_4|D|$ vertices in $D$. Thus there exists some vertex $z\in A_{x'}(x)$
  such that $yz\in E(D)$. Then $zyx'$ is a $(z,x')$-path not containing
  edge $zx$.  On the other hand, $z\in A_{x'}(x)$ implies that $zxx'\in
  \mathcal{P}(zx)$, a contradiction.
\end{proof}

We are now ready to combine all of the above steps to complete the proof of
our improved upper bound for the diameter-2-critical Caccetta-H\"aggkvist
conjecture.

\begin{proof}[Proof of Theorem \ref{thm:CHd2-improve}]
  Let $Y$ be the set of all $x'$ as defined in Lemma
  \ref{lem:CHd2upClaim8}.  Note that by construction, $Y$ is disjoint from
  $X$, and by that lemma, the bipartite subgraph $G[X,Y]$ induces a perfect
  matching, so
  \begin{displaymath}
    |Y|
    =
    |X|
    >
    (1-\sqrt{c})(1/2-c_5)n
    >
    (1/2-20c^{1/8}) n.
  \end{displaymath}
  Let $Z=V(G) \setminus (X\cup Y)$, and define $t$ such that
  $|X|=|Y|=(1/2-t) n$ and $|Z|=2t n$ for some $0\le t\le 20c^{1/8}$. Then
  for every $x\in X$, by Lemma \ref{lem:CHd2upClaim5}, we have that
  \begin{displaymath}
    (1/2-10c^{1/4})n
    < (1-c_4)(1/2-c_4)n
    \le d_G(x)
    \le (1/2+t)n.
  \end{displaymath}
  So the total number of edges satisfies
  \begin{align*}
    m
    &\ge
    \frac{1}{2}\sum_{x\in X}d_G(x)+e(Y) \\
    &\ge
    \frac{1}{2}\left(\frac{1}{2}-10c^{1/4}\right)
    \left(\frac{1}{2}-t\right)n^2+e(Y)
    \\
    &\ge
    \left(\frac{1}{8}-10c^{1/8}\right)n^2+e(Y).
  \end{align*}
  Also, since every vertex in $X$ or $Y$ has degree at most $(1/2+t)n$,
  \begin{align*}
    \sum_{v}d_v^2
    &\le
    |X| \left(\frac12+t\right)^2n^2+|Z|n^2+\left(\frac12+t\right)n\sum_{y\in Y} d_y\\
    &=
    \left(\left(\frac12-t\right)\left(\frac12+t\right)^2+2t\right) n^3
    +\left(\frac{1}{2}+t\right)n\cdot \left(2e(Y)+e(X,Y)+e(Y,Z)\right)\\
    &\le
    \left(\left(\frac12-t\right)\left(\frac12+t\right)^2+2t\right) n^3
    +(1+2t)n\cdot e(Y)
    +
    \left(\frac{1}{2}+t\right)n\cdot\left(\frac{1}{2}-t\right)n \cdot
    (1+2tn)\\
    &\le
    \left(\frac{1}{8}+4t+\frac{1}{4n}\right) n^3 +(1+2t)n\cdot e(Y),
  \end{align*}
  where $t\le 20c^{1/8}$.  Therefore, it is clear that by choosing $c$
  sufficiently small and $N$ sufficiently large, given $n\ge N$, we will have
  \begin{displaymath}
    \sum_{v}d_v^2\le \left(\frac{6}{5}-c\right) mn,
  \end{displaymath}
   completing the proof.
\end{proof}

\section{Asymptotics for maximizing edges}

Construction \ref{constr:dK} established a family of diameter-$k$-critical
graphs with $\frac{n^2}{4(k-1)}+o(n^2)$ edges.  In this section, we show
that estimate is tight up to a constant factor by proving Theorem
\ref{thm:diamk-edges}, which upper-bounds the number of edges by
$\frac{3n^2}{k}$.

Let $k \ge 2$ be fixed throughout this section.  Recall from Definition
\ref{defi:k-associated} that an unordered pair $\{x, y\}$ and edge $e$ are
$k$-associated if the distance between $x$ and $y$ is at most $k$, but rises above $k$ if $e$
is deleted.  In such a situation, if $P$ is an $(x, y)$-path of length at
most $k$, we also say that $P$ and $e$ are $k$-associated.

\begin{lem}
  \label{lem:k/3-assoc}
  Let $G$ be a diameter-$k$-critical graph.  For any edge $e$, there exists
  a path $P$ of length $\lceil\frac{k}{3}\rceil$ such that $e$ is
  $\lceil\frac{k}{3}\rceil$-associated with $P$.
\end{lem}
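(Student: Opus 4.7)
My plan is to use the diameter-$k$-criticality of $G$ to secure a $k$-associated pair $\{x,y\}$ with $e=uv$ and a shortest $(x,y)$-path $Q$ of length $d = d_G(x,y) \le k$ containing $e$. Write $Q = xQu \cdot uv \cdot vQy$ with $|xQu| = a$ and $|vQy| = b$, so $a + 1 + b = d$, and set $\ell = \lceil k/3 \rceil$. I would split into two cases according to whether $d \ge \ell$ or $d < \ell$.

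If $d \ge \ell$, the candidate for $P$ is simply a subpath of $Q$ of length exactly $\ell$ containing $e$, with endpoints $x', y'$. To verify that $e$ is $\ell$-associated with $P$, I would argue by contradiction: if there were an $(x', y')$-path $R$ in $G-e$ of length $\le \ell$, then concatenating the outer pieces $xQx'$ and $y'Qy$ of $Q$ around $R$ produces an $(x,y)$-walk in $G-e$ of length at most $(d - \ell) + \ell = d \le k$, contradicting $d_{G-e}(x,y) > k$. This case is a clean telescoping argument.

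If $d < \ell$, the relevant subpath of $Q$ is too short, and the key preliminary step is the bound $d_{G-e}(u,v) \ge 2\ell + 1$. This should follow because $xQu$ and $vQy$ avoid $e$, so they certify $d_{G-e}(x,u) = a$ and $d_{G-e}(v,y) = b$; the triangle inequality $k + 1 \le d_{G-e}(x,y) \le a + d_{G-e}(u,v) + b$, combined with $d \le \ell - 1$ and $k \ge 3\ell - 2$, gives the claimed bound. I would then run BFS from $u$ in $G-e$, pick a vertex $x'$ at BFS-distance exactly $\ell - 1$ from $u$, let $P_u$ be a shortest $(u, x')$-path in $G-e$, and take $P$ to be the concatenation of $P_u$ reversed with the edge $uv$. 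The path $P$ has length $\ell$, contains $e$, and is a genuine path (since $v \notin V(P_u)$, else $d_{G-e}(u,v) \le \ell - 1$), while the triangle inequality yields $d_{G-e}(x', v) \ge d_{G-e}(u,v) - (\ell - 1) \ge \ell + 2 > \ell$.

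The main obstacle is ensuring such an $x'$ at BFS-distance exactly $\ell - 1$ from $u$ really exists. If $u,v$ lie in the same component of $G-e$, this is automatic from $d_{G-e}(u,v) \ge 2\ell + 1$. The delicate subcase is when $e$ is a bridge with $u,v$ in distinct components $C_u, C_v$ of $G-e$; here I would run the symmetric BFS from $v$ if necessary (for which $d_{G-e}(x', v) = \infty$ is automatic across components), and observe that if \emph{neither} component contains a vertex at distance $\ell - 1$ from its center, then every pair of vertices in $G$ would lie within $(\ell - 2) + 1 + (\ell - 2) = 2\ell - 3$, contradicting $\mathrm{diam}(G) = k \ge 3\ell - 2$.
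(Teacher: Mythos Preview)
Your proof is correct and follows the same two-case split as the paper. Case~1 ($d \ge \ell$) is handled identically: extract a length-$\ell$ subpath of the shortest path containing $e$ and reroute to obtain a contradiction.

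In Case~2 ($d < \ell$) your execution differs from the paper's. The paper builds a spanning tree in $G$ rooted at an endpoint $a$ of $e$, shows its depth is at least $\lceil k/2 \rceil$ via a diameter argument, extracts a tree-path of length $\ell$ from $a$ (splicing in $e$ if needed), and then verifies $\ell$-association by a walk-rerouting contradiction that reuses the original $k$-associated pair. You instead work directly in $G-e$: you first derive the sharp bound $d_{G-e}(u,v) \ge 2\ell+1$ from the triangle inequality, BFS from $u$ in $G-e$ to locate $x'$ at distance exactly $\ell-1$, and certify $\ell$-association in one line via $d_{G-e}(x',v) \ge d_{G-e}(u,v) - (\ell-1) \ge \ell+2$. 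Your explicit handling of the bridge subcase (which the paper's tree-depth argument absorbs without comment) is also correct. Your route is arguably tidier---the path is built with $e$ already at one end, so no splicing is needed, and the association check is immediate---while the paper's tree argument has the minor advantage of not requiring a separate bridge analysis.
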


\begin{proof}
  For any edge $e$, since $G$ is diameter-$k$-critical, there exists a pair
  $\{u,v\}$ such that $d_G(u,v)\le k$ and $d_{G-e}(u,v)>k$.  Let $L$ be a
  shortest $(u,v)$-path.  If $L$ has length at least
  $\lceil\frac{k}{3}\rceil$, then we can choose vertices $x,y\in V(L)$ such
  that $e\in xLy$ and $xLy$ has length $\lceil\frac{k}{3}\rceil$.  We claim
  that $e$ and $xLy$ are $\lceil\frac{k}{3}\rceil$-associated.  Indeed, if
  not, then there must exist an $(x,y)$-path $M$ in $G-e$ of length at most
  $\lceil\frac{k}{3}\rceil$, and one can use $M$ and $L$ together to
  construct a path of length at most $k$ from $u$ to $v$ avoiding $e$,
  contradiction.

  It thus suffices to consider the case when $L$ has length at most
  $\lceil\frac{k}{3}\rceil-1$.  Write $e=ab$ and consider the
  depth-first-search tree $T$ with root $a$.  We see that the depth of $T$
  is at least $\lceil\frac{k}{2}\rceil$, as otherwise $d_G(s,t)\le
  d_G(a,s)+d_G(a,t)\le 2(\lceil\frac{k}{2}\rceil -1)<k$ for all pairs
  $\{s,t\}$, contradicting the fact that $G$ has diameter $k$.  Thus there
  exists a path $P'$ from the root $a$ to some vertex, say $z$, such that
  $P'$ has length $\lceil\frac{k}{3}\rceil$.  We define $P:= P'$ if $e\in
  P'$ and $P:=P'\cup \{e\}-z$ otherwise.  Note that $P$ is a path of length 
  $\lceil\frac{k}{3}\rceil$, satisfying $e\in P$.

  We will show that $e$ is $\lceil\frac{k}{3}\rceil$-associated with $P$.
  Let $x,y$ be the endpoints of $P$.  Suppose on the contrary that
  $\{x,y\}$ and $e$ are not $\lceil\frac{k}{3}\rceil$-associated; then
  there exists an $(x,y)$-path $Q$ in $G-e$ such that $|Q|\le
  \lceil\frac{k}{3}\rceil$.  Then, by combining $P$ and $Q$, we find that
  there is a walk of length at most $2\lceil\frac{k}{3}\rceil - 1$ from one
  endpoint of $e$, to $x$, to $y$, and then to the other endpoint of $e$,
  completely avoiding $e$.  Therefore, if we follow $L$ from $u$ to the
  nearest endpoint of $e$, and then take this $e$-avoiding walk to the
  other endpoint of $e$, and finish along $L$ to $v$, we find an
  $e$-avoiding walk from $u$ to $v$ of length at most $3
  \lceil\frac{k}{3}\rceil-2\le k$.  This contradicts $d_{G-e}(u,v)>k$, and
  completes the proof.
\end{proof}

We are now ready to show that every diameter-$k$-critical graph has at most
$\frac{3n^2}{k}$ edges.

\begin{proof}[Proof of Theorem \ref{thm:diamk-edges}.]

  Consider an arbitrary diameter-$k$-critical graph $G$. Let us 
  say that a pair $\{x,y\}$ and an edge $e$ are {\em matched} in $G$
  if $e$ is incident to at least one of $x,y$ and all shortest
  $(x,y)$-paths contain $e$.  We will estimate the number $N$ of pairs
  $(\{x,y\},e)$ such that $\{x,y\}$ and $e\in E(G)$ are matched in $G$.

  By the definition, it is clear that any pair $\{x,y\}$ can only match at
  most two edges, implying that $N\le 2\binom{n}{2}\le n^2$.  On the other
  hand, by Lemma \ref{lem:k/3-assoc}, any edge $ab$ is
  $\lceil\frac{k}{3}\rceil$-associated with a path $P$ of length
  $\lceil\frac{k}{3}\rceil$.  Without loss of generality, suppose that $P$
  is an $(x,y)$-path such that $x,a,b,y$ appear on $P$ in order, where
  possibly $x=a$ or $b=y$.  Then for any vertex $z\in xPa$, the edge $ab$
  is matched with $\{z,b\}$, and for any vertex $z\in bPy$, the edge $ab$
  is matched with $\{a,z\}$.  This shows that every edge is matched with at
  least $|V(P)|-1=\lceil\frac{k}{3}\rceil$ many pairs, implying
  $\frac{k}{3} e(G)\le N\le n^2$ and therefore $e(G)\le \frac{3n^2}{k}$.
\end{proof}

\section{Diameter 3}
\label{sec:diam3}

Throughout this section, let $G$ be a diameter-3-critical graph on $n$
vertices.  We will prove that $G$ has at most $\frac{n^2}{6} + o(n^2)$ edges.
Since we will work exclusively with diameter-3 graphs, let us simply say
that a pair $\{x,y\}$ of vertices and an edge $e$ are {\em associated} if
$d_G(x,y)\le 3$ and $d_{G-e}(x,y)\ge 4$.  Similarly, say that a pair
$\{x,y\}$ is {\em critical} if there exists some edge $e$ associated with
$\{x,y\}$.  For any critical pair $\{x,y\}$, we arbitrarily select one of
the $(x,y)$-paths of the smallest length to be $P_{xy}$, its corresponding
{\em critical path}.  We refer to a critical pair $\{x,y\}$ and its
corresponding critical path $P_{xy}$ interchangeably, i.e., we also say $e$
is associated with $P_{xy}$ if $e$ is associated with $\{x,y\}$.  Note that
$P_{xy}$ must be of length at most 3 and contain all edges associated with
$\{x,y\}$.

For every edge $e$, let $\mathcal{P}_1(e)$ be the set containing all
critical paths $P_{xy}$ associated with $e$.  Since $G$ is diameter
3-critical, $\mathcal{P}_1(e)\neq \emptyset$ for every edge $e$. A pair
$\{x,y\}$ is {\em 2-critical} if there exists a unique $(x,y)$-path of
length at most 2, and such a path is also called {\em 2-critical}.  For
every edge $e$, let $\mathcal{P}_2(e)$ be the set of all 2-critical paths
containing $e$.  The {\em multiplicity} of an edge $e$ is defined as
$m(e):=|\mathcal{P}_1(e)|+|\mathcal{P}_2(e)|$.

\subsection{Critical paths with all edges associated}
\label{sec:diam3:Pt}

Let $\mathcal{P}$ be the set of all critical paths $P_{xy}$ that are
associated with at least 2 edges.  Since every $(x, y)$-path of length at
most 3 must then contain those 2 edges, and $P_{xy}$ has length at most 3,
every edge in $P_{xy}$ must actually be associated with $P_{xy}$, and thus
$P_{xy}$ is the unique $(x,y)$-path of length at most 3.  For a positive
integer $t$, let $\mathcal{P}_t$ be the set of all critical paths in
$\mathcal{P}$ with length 3, where the middle edge has multiplicity at
least $t$ and the two non-middle edges each have
multiplicity less than $t$.

Inspired by the proof of F\"uredi \cite{Fu} for the diameter-2 case, we use
the ``(6, 3)'' theorem of Ruzsa and Szemer\'edi \cite{RSz} to show that
$|\mathcal{P}_t|$ is a lower order term compared to $n^2$.  Recall that a
{\em 3-uniform hypergraph} $H$ is a pair $(V(H),E(H))$, where the {\em
edge-set} $E(H)$ is a collection of 3-element subsets of $V(H)$, each of
which is called a \emph{3-edge}. $H$ is {\em linear} if any two distinct
3-edges share at most one vertex.  In a linear 3-uniform hypergraph, three
3-edges form a {\em triangle} if they form a structure isomorphic to
$\{\{1,2,3\},\{3,4,5\},\{5,6,1\}\}$.  Let $\RS(n)$ be the maximum number of
3-edges in a triangle-free, linear 3-uniform hypergraph on $n$ vertices.
\begin{thm} (Ruzsa and Szemer\'edi \cite{RSz})
\label{thm:RSz}
$\RS(n)=o(n^2)$.
\end{thm}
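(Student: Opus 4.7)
The plan is to prove Theorem~\ref{thm:RSz} via the classical reduction to the triangle removal lemma, which is the original route taken by Ruzsa and Szemer\'edi. Given a triangle-free linear 3-uniform hypergraph $H$ on $n$ vertices with $m$ 3-edges, I first construct an auxiliary tripartite graph $G$ on vertex set $V_1 \cup V_2 \cup V_3$, where each $V_i$ is a fresh copy of $V(H)$. For every 3-edge $\{a,b,c\} \in E(H)$, I add the three graph edges $a_1b_2$, $b_2c_3$, $a_1c_3$, which form a ``natural'' triangle in $G$. Linearity of $H$ forces these $m$ natural triangles to be pairwise edge-disjoint: a shared edge $a_1b_2$ would make two distinct hyperedges each contain both $a$ and $b$. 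Triangle-freeness of $H$ forces them to be the \emph{only} triangles in $G$: any other triangle $a_1b_2c_3$ would be supported by three distinct hyperedges through the pairs $(a,b)$, $(b,c)$, $(a,c)$, realizing exactly the forbidden six-vertex three-edge configuration.

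Next I will invoke the triangle removal lemma: for every $\varepsilon > 0$ there exists $\delta = \delta(\varepsilon) > 0$ such that any graph on $N$ vertices with at most $\delta N^3$ triangles can be made triangle-free by deleting at most $\varepsilon N^2$ edges. Suppose for contradiction that $m \ge \varepsilon' n^2$ for some fixed $\varepsilon' > 0$ and infinitely many $n$. Because the $m$ natural triangles are pairwise edge-disjoint in $G$, any triangle-killing edge set has size at least $m \ge \varepsilon' n^2 = (\varepsilon'/9) N^2$. The contrapositive of the removal lemma (applied with $\varepsilon = \varepsilon'/9$) then forces $G$ to contain at least $\delta N^3$ triangles for some $\delta>0$ depending only on $\varepsilon'$. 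But the linearity of $H$ already yields the trivial bound $m \le n(n-1)/6$, and $G$ has exactly $m$ triangles, contradicting $m \ge \delta N^3 = 27\delta n^3$ once $n$ is large. Hence $m = o(n^2)$.

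The main obstacle is of course the triangle removal lemma itself, which is the real content here. I would derive it from Szemer\'edi's regularity lemma by producing an $\eta$-regular partition of $V(G)$, then cleaning the graph by deleting edges inside classes, across low-density pairs, and across irregular pairs, losing only $O(\eta N^2)$ edges. The counting lemma then guarantees that any triangle that survives is supported on a dense regular triple, which in turn witnesses $\Omega(N^3)$ triangles in the original graph; taking $\delta$ small relative to the density threshold makes this impossible. This is precisely the tower-type argument alluded to earlier in the paper when discussing the desirability of a Regularity-free alternative, and it is the reason F\"uredi's exact result for diameter-$2$-critical graphs requires such stringent hypotheses on $n$.
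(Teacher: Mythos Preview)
The paper does not prove Theorem~\ref{thm:RSz}; it is quoted as a black box from Ruzsa and Szemer\'edi~\cite{RSz} and used only as an input to Lemmas~\ref{lem:P_t} and~\ref{lem:t-light}. So there is no ``paper's own proof'' to compare against, and supplying one was not required.

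That said, your sketch is the standard and correct derivation. The tripartite auxiliary graph, the edge-disjointness of the natural triangles from linearity, and the identification of any non-natural triangle with a forbidden $(6,3)$-configuration are all sound (the case analysis showing that if two of the three supporting hyperedges coincide then all three do, forcing six distinct vertices otherwise, is the only point worth spelling out in a full write-up). The contradiction via the triangle removal lemma is also set up correctly: $G$ has exactly $m \le n(n-1)/6$ triangles, while the contrapositive of removal would demand $\Omega(n^3)$. Your closing remark about the tower-type dependence matches the paper's own commentary in the introduction on why a Regularity-free approach to Conjecture~\ref{conj:murty-simon} would be desirable.
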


The proof of the following lemma parallels F\"uredi's, differing mainly at
the definition of the auxiliary hypergraph.  We write out the proof for
completeness.

\begin{lem}
\label{lem:P_t}
For each positive integer $t$, $|\mathcal{P}_t|\le 54t\cdot\RS(n)$.
\end{lem}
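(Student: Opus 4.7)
My plan is to mimic F\"uredi's reduction of the diameter-2 version of this type of bound to the Ruzsa--Szemer\'edi $(6,3)$-theorem. For each critical path $P_{xy}=xaby\in\mathcal{P}_t$ (length $3$, middle edge $ab$), I would place a bounded number of $3$-edges into an auxiliary $3$-uniform hypergraph $H$ on $V(G)$, a natural choice being the triples $\{x,a,y\}$ and $\{x,b,y\}$ (possibly together with $\{x,a,b\}$ and $\{a,b,y\}$). This yields $|E(H)|\ge |\mathcal{P}_t|/O(1)$.

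To make $H$ linear, I would use the multiplicity hypothesis: each end-edge of $P_{xy}$ has multiplicity less than $t$, so a pair of the form $\{x,a\}$ or $\{b,y\}$ lies in fewer than $t$ of the chosen $3$-edges, since any such $3$-edge is witnessed by a path in $\mathcal{P}_t$ containing the corresponding end-edge. A standard cleaning that discards $3$-edges sitting on any pair with multiplicity at least $2$ in the auxiliary (multi)hypergraph therefore loses only a factor $O(t)$, producing a linear subhypergraph $H'$ with $|E(H')|\ge |\mathcal{P}_t|/O(t)$.

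Next I would verify $H'$ contains no RSz-triangle $\{u_1,u_2,u_3\},\{u_3,u_4,u_5\},\{u_5,u_6,u_1\}$. Suppose such a triangle exists, arising from three paths $P_1,P_2,P_3\in\mathcal{P}_t$. Since each $P_i\in\mathcal{P}$, every edge of $P_i$ is associated with its endpoint pair, meaning every $(x_i,y_i)$-walk of length $\le 3$ in $G$ must use all three edges of $P_i$. A case analysis matching the six vertices $u_1,\dots,u_6$ to the roles (endpoint vs.\ internal vertex) they play in the three paths lets me splice together a short alternate walk between the endpoints of some $P_i$ which avoids one of its edges, contradicting the associated-edge property. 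Given linearity and triangle-freeness, Theorem~\ref{thm:RSz} gives $|E(H')|\le \RS(n)$, so $|\mathcal{P}_t|\le 54t\cdot \RS(n)$ after collecting the constants from the previous two steps into the factor $54$.

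The main obstacle I anticipate is the triangle-freeness step: because paths in $\mathcal{P}_t$ have length $3$ rather than $2$, the case analysis is substantially richer than in F\"uredi's original argument, and I must track whether the ``small multiplicity'' pairs in the hypergraph correspond to $2$-critical paths or to length-$3$ critical paths in $\mathcal{P}$, since both types contribute to $m(e)$ but interact differently with alternate-walk constructions. Getting this bookkeeping right is also what dictates the precise choice of $3$-edges in the construction step, and hence the particular constant $54$.
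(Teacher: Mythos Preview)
Your plan matches the paper's approach almost exactly: build a $3$-uniform hypergraph from $\mathcal{P}_t$ using triples of the form $\{x,a,y\}$, linearize at cost $O(t)$, prove triangle-freeness, and invoke Ruzsa--Szemer\'edi. Two places are thinner than you may realize, and the paper handles each with a specific device you have not mentioned.

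First, your linearization sketch only treats the pairs $\{x,a\}$ and $\{b,y\}$ (the end-edges), but the $3$-edge $\{x,a,y\}$ also carries the pair $\{a,y\}$, which is \emph{not} an edge of $G$; you cannot bound its hypergraph-multiplicity directly from $m(xa)$ or $m(by)$. The paper closes this by observing that $aby$ is the unique $(a,y)$-path of length $\le 2$ (since $P_{xy}\in\mathcal{P}$), so any other $3$-edge through $\{a,y\}$ must come from a path of $\mathcal{P}_t$ containing $aby$ and hence the edge $by$, whence the bound $m(by)<t$ applies after all.

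Second, rather than attempt the full triangle case analysis you flag as the main obstacle, the paper inserts two further reductions: pass to a $3$-partite subhypergraph (cost a factor $9/2$), and then keep only those $3$-edges whose ``handle'' lies in $V_1$ and ``center'' in $V_2$ (cost a factor $6$). After this, every $3$-edge $\{v_1,v_2,v_3\}$ with $v_i\in V_i$ encodes a path $v_1v_2\,?\,v_3\in\mathcal{P}_t$ with $v_1v_2\in E(G)$, and the triangle-freeness check collapses to a single configuration, which is dispatched in three lines by exhibiting an alternate $(a_1,b_3)$-path $a_1a_2vb_3$ contradicting uniqueness. The factors $2t$, $9/2$, and $6$ from the three reductions are exactly what produce the constant $54$; the case analysis you anticipate is entirely bypassed.
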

\begin{proof}
  Consider a path $P_{xy}:=xaby$ in $\mathcal{P}_t$. By definition,
  multiplicities $m(xa)<t$, $m(ab)\ge t$, $m(by)<t$, and edges $xa,ab,by$
  are all associated with $P_{xy}$, implying that $xab$ and $aby$ are
  2-critical paths.  Define the 3-uniform hypergraph $H_1$ such that
  $V(H_1):=V(G)$, and form the edge-set $E(H_1)$ by arbitrarily choosing
  exactly one of $\{x,a,y\}$ and $\{x,b,y\}$ for each path $xaby$ in
  $\mathcal{P}_t$, so that $|E(H_1)|=|\mathcal{P}_t|$.  For a 3-edge
  $\{x,a,y\}\in E(H_1)$ obtained from the path $P_{xy}=xaby\in
  \mathcal{P}_t$, we call vertices $a$ and $x$ the {\em center} and {\em
  handle} of this 3-edge, respectively.

  We claim that the number of 3-edges of  $H_1$  intersecting  $\{x,a,y\}$
  in 2 elements is at most $2t-2$.  To see this, first observe that the
  critical pair $\{x,y\}$ does not appear in any other 3-edges of $H_1$.
  Since $m(xa)<t$, edge $xa$ (and hence the pair $\{x,a\}$) is contained in
  fewer than $t$ critical paths in $\mathcal{P}$, implying that the number
  of 3-edges of $H_1$ containing $\{x,a\}$ is fewer than $t$.  Also note
  that $aby$ is 2-critical, so if $\{a,y\}$ is contained in some 3-edge of
  $H_1$ then $aby$ (and in particular $\{b,y\}$) must be contained in the
  corresponding path in $\mathcal{P}_t$, but $\{b,y\}$ is contained in
  fewer than $t$ paths in $\mathcal{P}_t$ as $m(by)<t$.  Therefore, the
  number of 3-edges of $H_1$ containing $\{a,y\}$ is at most $t-1$, completing 
  the proof of the claim.

We use a greedy algorithm to construct a linear 3-uniform hypergraph $H_2$ from $H_1$ as
follows.  Initially set $V(H_2):=V(G)$, $E(H_2):=\emptyset$ and
$A:=E(H_1)$. In each coming iteration, if $A$ is empty, then stop;
otherwise, choose a 3-edge $\{x,a,y\}\in A$, move it to $E(H_2)$ and then
delete all 3-edges in $A$ which intersect $\{x,a,y\}$ in 2 elements.  When
it ends, we obtain a linear 3-uniform hypergraph $H_2$ such that
\begin{align}
\label{ineq:H2}
|E(H_2)|\ge \frac{|E(H_1)|}{2t}=\frac{|\mathcal{P}_t|}{2t}.
\end{align}

A $r$-uniform hypergraph $H$ is called {\em $r$-partite} if there is a
partition $V(H)=V_1\cup V_2\cup \ldots \cup V_r$ such that for each $e\in
E(H)$ it holds for all $1\le i\le r$ that $|e\cap V_i|=1$.  By randomly and
independently placing each vertex into one of 3 parts, a simple expectation
argument shows that there exists a 3-partite linear 3-uniform hypergraph $H_3$ with
parts $V_1, V_2, V_3$ such that $V(H_3)=V(H_2)$, $E(H_3)\subset E(H_2)$ and
\begin{align}
\label{ineq:H3}
|E(H_3)|\ge \frac{3!}{3^3}|E(H_2)|.
\end{align}
Without loss of generality, we may assume that at least 1/6 of the 3-edges
of $H_3$ have center in $V_2$ and handle in $V_1$.  So there exists a
spanning subhypegraph $H_4$ of $H_3$ satisfying
\begin{align}
\label{ineq:H4}
|E(H_4)|\ge \frac{1}{6}|E(H_3)|
\end{align}
and the property that if $\{v_1,v_2,v_3\}$ is a 3-edge of $H_4$ with
$v_i\in V_i$, then it must be obtained from the critical path $v_1 v_2 v
v_3 \in \mathcal{P}_t$, for some vertex $v$.  By
\eqref{ineq:H2}--\eqref{ineq:H4}, we see
\begin{displaymath}
  |\mathcal{P}_t|\le 54t\cdot |E(H_4)|.
\end{displaymath}

To complete the proof, it suffices to show that $H_4$ has no triangles.
Suppose, on the contrary, that three 3-edges $T_1,T_2,T_3$ of $H_4$ form a
triangle.  Since $H_4$ is linear, we must have $|T_1\cup T_2\cup T_3|=6$.
It also holds for each $1\le i\le 3$ that $|V_i\cap (T_1\cup T_2\cup
T_3)|=2$, as otherwise there is a common vertex in all of 3-edges.  Let
$V_i\cap (T_1\cup T_2\cup T_3)=\{a_i,b_i\}$. Without loss of generality, we
may assume $T_1\cap T_2\cap V_1=\{a_1\}$ such that $T_1=\{a_1,a_2,a_3\}$
and $T_2=\{a_1,b_2,b_3\}$. By symmetry, the only case to consider is
$T_3=\{b_1,a_2,b_3\}$.  Then the construction of $H_4$ ensures that
$a_1a_2\in E(G)$ and there are critical paths $P_{a_1b_3}:=a_1 b_2 u b_3$
and $P_{b_1b_3}:=b_1 a_2 v b_3$ in $\mathcal{P}_t$ for some $u$ and $v$.
Clearly $a_1 a_2 v b_3$ is an $(a_1,b_3)$-path of length 3 distinct from
$P_{a_1b_3}$.  But $P_{a_1b_3}$ should be the unique $(a_1,b_3)$-path of
length at most 3 because $P_{a_1b_3}\in \mathcal{P}$.  This contradiction
finishes the proof.
\end{proof}

\subsection{Covering by critical paths}

The key innovation in our proof for the diameter-3 case is a delicate
accounting of critical paths and edges.  We will construct a family
$\mathcal{F}$ of critical paths such that
\begin{align}
\label{prop:familyF}
\text{every edge in } G \text{ is associated with at least one path in } \mathcal{F}.
\end{align}
This family will be obtained by an iterative greedy algorithm.  In the
$i$-th iteration, we will enlarge $\mathcal{F}$ by adding one or two
critical paths that are selected according to several prescribed rules (see
the algorithm below).  We define the set $P(i)$ to keep track of the
critical paths added in the $i$-th iteration, and for bookkeeping purposes,
we also define sets $P^2(i)$ of some 2-critical paths relevant for the
$i$-th iteration.

During this process, we also maintain an \emph{unsettled set} $U$ which
contains edges of $G$ not ``essentially'' contained in those paths in
$\mathcal{F}$. Let us give the formal definition for $U$.  We have
$|\mathcal{P}_1(e)|\ge 1$ for every $e\in E(G)$, as $G$ is
diameter-3-critical, and thus it is possible that $e$ is associated with
several critical paths of $\mathcal{F}$ added in different iterations.  We
say edge $e$ is {\em settled} in iteration $i$, if $i$ is the first
iteration which adds one critical path associated with the edge $e$.  Note
that given an edge settled in $i$, that edge could be contained in some
critical path added by previous iterations.  Throughout the process, $U$ is
defined to be the up-to-date set consisting of all edges which are not
settled yet.  We define types for edges in $U$ as follows.  An edge $e\in
U$ is of:
\begin{description}
  \item[Type 1:] if there exists a critical path $P\in \mathcal{P}_1(e)$
    which contains at least two associated edges (including $e$) in $U$.

  \item[Type 2:] if it is not of type \1 and there exists a critical path
    $Q\in \mathcal{P}_1(e)$ such that $|Q|=3$ and $e$ is the middle edge of
    $Q$.

  \item[Type 3:] if its type is not in $\{\1, \2\}$ and there exists some
    edge $f\in U$ such that $e$ and $f$ induce a 2-critical path.

  \item[Type 4:] if its type is not in $\{\1, \2, \3\}$ and there exists a
    critical path $R\in \mathcal{P}_1(e)$ such that $|R|=3$ and $e$ is not
    the middle edge of $R$.

  \item[Type 5:] if its type is not in $\{\1, \2, \3, \4\}$ and there
    exists a critical path in $\mathcal{P}_1(e)$ of length two.

  \item[Type 6:] if its type is not in $\{\1, \2, \3, \4, \5\}$.  Note that
    if $e$ is of type \6, then we must have $\mathcal{P}_1(e)=\{e\}$.
\end{description}

We now describe the greedy algorithm. Initially, set
$\mathcal{F}:=\emptyset$ and $U:=E(G)$; we iterate until $U=\emptyset$.  In
the $i$-th iteration, let $t_i$ be the smallest type that the edges in $U$
have.  (Note that when $t_i\ge \2$, $\mathcal{P}_1(e)\cap
\mathcal{P}_1(f)=\emptyset$ for any $e,f\in U$.) We split into cases based
upon the value of $t_i$:
\begin{itemize}
\item [(C1).] If $t_i=\1$, choose a critical path $P_{xy}$ which contains
  at least two associated edges in $U$.  Add $P_{xy}$ to $\mathcal{F}$,
  update $U$ by deleting all associated edges of $P_{xy}$, and then let
  $P(i):=\{P_{xy}\}$ and $P^2(i) := \{P_{xy}-x, P_{xy}-y\}$.

\item [(C2).] If $t_i=\2$, choose an edge $e\in U$ and a critical path
  $P_{xy}\in \mathcal{P}_1(e)$ of length three such that $e$ is the middle
  edge of $P_{xy}$. Add $P_{xy}$ to $\mathcal{F}$, delete $e$ from $U$, and
  let $P(i):=\{P_{xy}\}$ and $P^2(i) := \{P_{xy}-x, P_{xy}-y\}$.

\item [(C3).] If $t_i=\3$:
  \begin{itemize}
    \item [(C3-1).] If there exist $e,f\in U$ such that the path $P:=e\cup
      f$ is in $\mathcal{P}_1(e)$, then choose a critical path $P'\in
      \mathcal{P}_1(f)$, add $P, P'$ to $\mathcal{F}$, delete $e,f$ from
      $U$, and let $P(i):=\{P, P'\}$ and $P^2(i) := \emptyset$. If there
      are multiple choices for $e,f$, choose one that maximizes $|P| +
      |P'|$.

    \item [(C3-2).] Otherwise, choose edges $e,f\in U$ and critical paths
      $P\in \mathcal{P}_1(e)$, $P'\in \mathcal{P}_1(f)$ such that $e\cup f$
      is a 2-critical path. Add $P,P'$ to $\mathcal{F}$, delete $e,f$ from
      $U$, and let $P(i):=\{P, P'\}$ and $P^2(i) := \{e\cup f\}$.
\end{itemize}

\item [(C4).] If $t_i=\4$, choose an edge $e\in U$ and a path $P_{xy}\in
  \mathcal{P}_1(e)$ of length three such that $e$ is incident to $x$ but
  not $y$.  Add $P_{xy}$ to $\mathcal{F}$, delete $e$ from $U$, and let
  $P(i):=\{P_{xy}\}$ and $P^2(i) := \{P_{xy}-y\}$.

\item [(C5).] If $t_i=\5$, choose an edge $e\in U$ and a path $P\in
  \mathcal{P}_1(e)$ of length two. Add $P$ to $\mathcal{F}$, delete $e$
  from $U$, and let $P(i):=\{P\}$ and $P^2(i) := \{e\}$.

\item [(C6).] If $t_i=\6$, then all edges in $U$ are critical paths of
  length one. Choose any $e\in U$, add $e$ to $\mathcal{F}$, delete $e$
  from $U$, and let $P(i):=\{e\}$ and $P^2(i) := \emptyset$.

\end{itemize}

When the algorithm stops, the obtained family $\mathcal{F}$ clearly
satisfies the property \eqref{prop:familyF}.  In each iteration, at least
one edge is deleted from $U$ and thus settled, and one or two critical
paths are added to $\mathcal{F}$.  We also observe that for every edge $e$,
as the algorithm is proceeding, the type of $e$ is nondecreasing (until $e$
is deleted from $U$).  This is because the type of an edge could change
from $\1$ or $\3$ to some larger $k$, but any type other than $\1$ or $\3$
will stay as it is.  This also shows that the sequence $\{t_i\}$ is
nondecreasing.
\begin{lem}
  \label{fact:P(Ii)}
  For every iteration $i$,
  \begin{description}
    \item[(a)] Each path in $P(i)$ is critical, and each path in $P^2(i)$
      is 2-critical.  Each such path contains at least one edge settled in
      the $i$-th iteration.

    \item[(b)] For each path in $P^2(i)$, one of the following holds:
      \begin{itemize}
        \item all edges of $P$ are settled in iteration $i$; or
        \item $P$ is of length two such that one edge of $P$ is settled in
          iteration $i$ and the other is settled in some previous
          iteration.
      \end{itemize}
  \end{description}
\end{lem}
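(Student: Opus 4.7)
The plan is to verify both parts by direct case analysis on the six cases (C1)--(C6) of the greedy algorithm, since $P(i)$ and $P^2(i)$ are defined explicitly in each.

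For part (a), the criticality of each path in $P(i)$ is immediate from the construction: in every case the inserted paths are drawn from sets $\mathcal{P}_1(\cdot)$, except in (C6) where $e$ is itself a critical path (since $\mathcal{P}_1(e)=\{e\}$). The 2-criticality of paths in $P^2(i)$ is the substantive claim. The key sub-lemma I would isolate is: if $P_{xy}=xaby$ is a critical path of length $3$ and $xa$ (resp.\ $ab$, $by$) is associated with $\{x,y\}$, then every $(x,y)$-path of length $\leq 3$ is forced to use that edge, which in turn forces the subpath $xab$ (resp.\ $aby$) to be the unique $(x,b)$-path (resp.\ $(a,y)$-path) of length $\leq 2$, hence 2-critical. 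An analogous argument in the length-$2$ case of (C5) shows that the single edge $e$ is 2-critical when $\mathcal{P}_1(e)$ contains a length-$2$ path. In (C3-2) the 2-criticality of $e\cup f$ is part of the definition of type $3$. Finally, the requirement that each path in $P(i)\cup P^2(i)$ contains at least one edge settled in iteration $i$ is obvious from the deletion step in each case.

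For part (b), I would trace, in each case, the origin of every edge of every path in $P^2(i)$, and show that any edge not settled in iteration $i$ must already have been settled in a strictly earlier iteration, i.e., cannot still be in $U$ when iteration $i$ begins. The essential tool is the minimality of $t_i$. For example, in (C2) with chosen path $P_{xy}=xaby$ and $e=ab$, a quick argument shows that $xa$ and $by$ must also be associated with $\{x,y\}$ (otherwise one could reroute around $ab$), and if either were still in $U$ at the start of iteration $i$, then $ab$ would have type $1$, contradicting $t_i=2$. Hence the non-$e$ edges of the length-$2$ paths in $P^2(i)$ were settled earlier. The same template handles (C4): the non-deleted edge $ab$ of $xab$ cannot still lie in $U$, since otherwise the 2-criticality of $xab$ would make $xa$ of type $3$, contradicting $t_i=4$.

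The easy cases (C1), (C3-1), (C3-2), (C5), (C6) are then mostly bookkeeping: either $P^2(i)=\emptyset$, or every edge of each $P\in P^2(i)$ is deleted from $U$ in the current iteration, so the first bullet of (b) applies directly. The only mild annoyance is organizing (C1) cleanly, because $P_{xy}$ may have length $2$ or $3$ and a priori only some of its associated edges need lie in $U$ at the start of iteration $i$; I would handle this by first showing the ``forced packet'' observation that if two edges of a length-$3$ critical path are associated with its endpoints, then the third is too, reducing (C1) to a check of which of the three associated edges are currently in $U$.

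The step I expect to require the most care is the ``type-minimality'' argument in (C2) and (C4), since it is what guarantees the ``settled in some previous iteration'' clause of (b); every other step of the proof is essentially combinatorial verification once the sub-lemmas about 2-criticality of subpaths and the forced-packet observation are in hand.
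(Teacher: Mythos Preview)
Your proposal is correct and follows essentially the same approach as the paper: a case split on (C1)--(C6), using the observation that a critical path associated with at least two of its edges is in fact associated with all of them (your ``forced packet'' observation, the paper's ``$P_{xy}\in\mathcal{P}$''), together with the type-minimality argument to handle the non-settled edges in (C2) and (C4). Your write-up is somewhat more explicit than the paper's---you isolate the subpath 2-criticality sub-lemma and spell out the type contradictions in (C2) and (C4)---but the logical skeleton is identical; the only minor imprecision is the parenthetical ``otherwise one could reroute around $ab$'' in (C2), where the actual reason $xa$ and $by$ are associated is that $P_{xy}$ is a \emph{shortest} $(x,y)$-path, forcing $xb,ay\notin E(G)$ and hence uniqueness of $xaby$.
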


\begin{proof}
  We split into cases based upon which of (C1)--(C6) happened at the $i$-th
  iteration.  First, we observe that (a) and (b) hold trivially if (C3) or
  (C6) occur.  If (C1) or (C2) occurs, we see that the critical path
  $P_{xy}$ must be in the set $\mathcal{P}$ (see its definition in last
  subsection).  Thus (a) and (b) both follow by the fact that all edges of
  $P_{xy}$ are associated with $P_{xy}$.  If (C4) occurs, by definition we
  see that $e\cup e' := P_{xy}-y$ is 2-critical. And we also have $e'\notin
  U$, as otherwise the type of $e$ would be at most $\3$. Hence $e'$ was
  settled before iteration $i$ and the conclusions hold in this case.
  Lastly, if (C5) occurs, it is easy to verify that $e$ indeed is
  2-critical, finishing the proof.
\end{proof}

\begin{lem}
  \label{lem:P(Ii)}
  All sets $P(i)$ and $P^2(i)$ are pairwise disjoint, and for all but at
  most $\frac{n}{2}$ iterations $i$, we have $|P(i) \cup P^2(i)|\ge 2$.
\end{lem}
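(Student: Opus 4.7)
The plan is to establish the pairwise disjointness of all the sets $\{P(i),\, P^2(i)\}_i$, and then reduce the remaining claim to bounding the number of (C6) iterations by $n/2$.

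For disjointness, I would verify (i) $P(i)\cap P(j)=\emptyset$ and (ii) $P^2(i)\cap P^2(j)=\emptyset$ for $i\ne j$, and (iii) $P(i)\cap P^2(j)=\emptyset$ for all $i,j$. The key for (i) is that once a critical path $P_{xy}$ is placed in $\mathcal{F}$, every edge $e$ with $\{x,y\}$ associated to $e$ becomes settled (as $P_{xy}\in\mathcal{P}_1(e)$ is then an associated critical path in $\mathcal{F}$); since each case (C1)--(C6) requires an unsettled edge associated with its critical pair in order to add $P_{xy}$, the path cannot be added twice. For (ii), Lemma~\ref{fact:P(Ii)}(b) forces every edge of a 2-critical path $Q\in P^2(i)$ to be settled by the end of iteration $i$; if $Q$ also lay in $P^2(j)$ for some $j>i$, the same lemma would demand an edge of $Q$ settled at $j$, contradiction. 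For (iii), within a single iteration the 2-critical paths in $P^2(i)$ differ from $P(i)$ by a per-case inspection (for instance, $P_{xy}-x$ and $P_{xy}-y$ are strictly shorter than $P_{xy}$, and in (C3-2) the chosen $P, P'$ must differ from $e\cup f$, since otherwise (C3-1) would apply); the across-iteration case is ruled out by combining the settlement argument with the type constraints---for example, if $e$ is handled in (C5) then $e$ not being of type $\le 4$ means $\mathcal{P}_1(e)$ has no length-3 critical path, which in turn rules out any later (C1)/(C2)/(C4) iteration whose sub-path could coincide with the $P_{xy}$ already in $P(i)$.

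Inspecting each case yields $|P(i)\cup P^2(i)|=3$ for (C1), (C2), (C3-2); $|P(i)\cup P^2(i)|=2$ for (C3-1), (C4), (C5); and $|P(i)\cup P^2(i)|=1$ only for (C6). The within-iteration distinctness needed for these counts in (C3-1) and (C3-2) follows from the type condition $t_i=3$ preventing $P=P'$ (else $e$ would itself be of type 1) and also $P,P'\ne e\cup f$ (else (C3-1) would have been triggered instead of (C3-2)). Hence it suffices to bound the number of (C6) iterations by $n/2$, which I would do by showing that the edges settled in (C6) iterations form a matching in $G$.

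Suppose, for contradiction, that two type-6 edges $e_1=xy$ and $e_2=xz$ share the vertex $x$, and that $e_1$ is handled first, in iteration $i$. Then $e_2\in U_i$, and since $e_1$ has type 6 (in particular not type 3), the path $e_1\cup e_2=yxz$ is not 2-critical. Therefore $\{y,z\}$ admits another $(y,z)$-path of length at most $2$: either $yz\in E(G)$, yielding the $(x,z)$-path $xyz$ of length $2$ avoiding $e_2$, or $y,z$ share a common neighbor $w\ne x$, yielding the $(x,z)$-path $xywz$ of length $3$ avoiding $e_2$. But $e_2$ being type 6 means $\mathcal{P}_1(e_2)=\{e_2\}$, so $d_{G-e_2}(x,z)\ge 4$ and every $(x,z)$-path of length at most $3$ in $G$ must use $e_2$, a contradiction. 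Hence the (C6) edges form a matching, so there are at most $n/2$ such iterations. This matching claim is the main substantive step of the proof; the disjointness statements and the per-case size inspection are essentially bookkeeping via the type hierarchy.
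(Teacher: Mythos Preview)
Your matching argument for the (C6) edges is correct (and takes a slightly different route from the paper's, which instead shows directly that $e_1\cup e_2$ \emph{is} 2-critical, contradicting type~6).  Your per-case size counts and the arguments for $P(i)\cap P(j)=\emptyset$ and $P^2(i)\cap P^2(j)=\emptyset$ are also fine.

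The gap is in part (iii), the cross-iteration case $P\in P(i)\cap P^2(j)$ with $i<j$.  Your illustrative example is vacuous: if $t_i=5$ then $t_j\ge 5$ by monotonicity, but length-2 paths only occur in $P^2(j)$ when $t_j\le 4$, so that scenario never arises.  The actual case analysis forces $|P|=2$, and then (ruling out $t_i=1$ because both edges of $P$ would be settled at $i$, and $t_j\in\{3,5,6\}$ because $P^2(j)$ would then either be empty, have both edges settled at $j$, or contain only length-1 paths) the only surviving configuration is $t_i=3$ and $t_j=4$.  Here the ``settlement plus type'' reasoning you sketch does not close the argument: one must use the maximization rule in (C3-1).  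Concretely, writing $P=e\cup e'$ with $e$ settled at $i$ and $e'$ settled at $j$, you have $P\in\mathcal{P}_1(e)$ and at time $j$ a length-3 path $Q'\in\mathcal{P}_1(e')$ with $P\subsetneq Q'$; so at time $i$ the pair $e,e'$ was already a valid (C3-1) candidate with $|P|+|Q'|=5$.  You then need to argue that the pair actually chosen in iteration $i$ had both paths of length $2$, which violates the requirement that the algorithm maximize $|P|+|P'|$.  This is the one place the maximization clause in (C3-1) is used, and without it the disjointness does not follow --- so your assessment that the disjointness is ``essentially bookkeeping'' underestimates this step.
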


\begin{proof}
  We first prove the second part.  It is clear from the algorithm that
  $|P(i) \cup P^2(i)|=1$ if and only if $t_i=\6$.  Let us consider the
  first iteration $i$ when all edges in $U$ are of type \6 (hence all
  critical paths of length one).  We claim that this must be a matching.
  Indeed, suppose on the contrary that $e,f\in U$ share a common vertex. It
  is easy to verify that $e\cup f$ actually is 2-critical, implying that
  the type of $e$ is at most $\3$, a contradiction.  Thus, for all but the
  last $|U|\le \frac{n}{2}$ iterations $i$, it holds that $|P(i) \cup
  P^2(i)|\ge 2$.

  It remains to prove the first part, for which it is enough to show that
  $P(i) \cup P^2(i)$ is disjoint from $P(j) \cup P^2(j)$ when $i<j$.  Since
  the sequence $\{t_i\}$ is nondecreasing, it holds that $t_i\le t_j$.
  Consider any $P\in P(i) \cup P^2(i)$ and $Q\in P(j) \cup P^2(j)$, and
  suppose for contradiction that $P=Q$.  First, consider the case when $P
  \in P^2(i)$.  By Lemma \ref{fact:P(Ii)}, every edge of $P$ is settled in
  iteration $i$ or earlier, whereas there is at least one edge of $Q$
  settled in iteration $j$, so $Q \neq P$, contradiction.

  We may therefore assume that $P \in P(i)$. If $Q \in P(j)$, then $P = Q$
  actually contained at least two associated edges which were in $U$ at
  time $i$, and hence we must have been in (C1) at time $i$.  However, then
  we would have deleted all of $P$'s associated edges at that time, making
  it impossible to find $Q$ in $P(j)$ later, contradiction.

  Thus, we may assume that $Q \in P^2(j)$.  By Lemma \ref{fact:P(Ii)}(a),
  at least one edge of $P$ was settled at iteration $i$, and so Lemma
  \ref{fact:P(Ii)}(b) implies that $P=Q$ is of length 2, say $e \cup e'$,
  such that $P\in \mathcal{P}_1(e)$ and $e$ and $e'$ are settled in
  iterations $i$ and $j$ respectively.  From the algorithm, we see that
  (C5) and (C6) do not produce paths of length 2 in $P^2(j)$, and so
  $t_j\le \4$. On the other hand, we have that $t_i\notin \{\1, \2, \4\}$.
  As $t_i\le t_j$, it must be the case that $t_i=\3$ and $t_j=\4$, that is,
  (C3) occurs at time $i$ and (C4) occurs at time $j$.

  Since we must be in (C3) at time $i$, let $f$ be the other settled edge
  in that iteration, and let $P'\in \mathcal{P}_1(f)$ be the other critical
  path in $P(i)$.  Similarly, since we must be in (C4) at time $j$, and
  $e'$ is settled there, let $Q'\in \mathcal{P}_1(e')$ be the corresponding
  critical path from $P(j)$, so that $Q\subsetneq Q'$.  From above, we had
  $e \cup e' = P \in \mathcal{P}_1(e)$, so $e,e'$ form a candidate for
  (C3-1) in iteration $i$ with $|P|=2$ and $|Q'|=3$.  This shows that
  (C3-1) must occur at time $i$, when the two settled edges are $e,f$.
  Looking back on the algorithm, in (C3-1), the two paths in $P(i)$ are $e
  \cup f$, together with a critical path which is associated with either
  $e$ or $f$.  Since $e \cup e' = P \in P(i)$, it must be one of these. 
  We see that $P\neq e \cup f$, because that would force $e' = f$, and $f$ is
  settled at time $i$ while $e'$ is settled at time $j$.  Therefore, $P$
  must be the other critical path in $P(i)$ rather than $e\cup f$. Since $P$
  has length 2, we conclude that the two paths in $P(i)$ both have length
  2.  Yet, as mentioned, $e$ and $e'$ gave rise to an alternate candidate for iteration $i$ in which the
  two paths $P\in \mathcal{P}_1(e)$ and $Q'\in \mathcal{P}_1(e')$ had lengths 2 and 3, and since the algorithm sought
  to maximize the sum of path lengths in $P(i)$ at (C3-1), we have a
  contradiction.
\end{proof}

\subsection{Putting everything together}

The previous section's accounting enables us to complete our proof with
methods similar to to F\"uredi's \cite{Fu} diameter-2 argument.  For a
graph $H$, let $\Di(H)$ denote the set of pairs $\{u,v\}$ in $V(H)$ such
that $u$ and $v$ have disjoint neighborhoods, and let $\di(H):=|\Di(H)|$.
We say that a 2-critical path $P$ is {\em $t$-light} if $|P|=2$ and both
its edges have multiplicity less than $t$.  Recall from the beginning of
Section \ref{sec:diam3} that we define the multiplicity $m(e)$ of an edge $e$
to be the sum $|\mathcal{P}_1(e)| + |\mathcal{P}_2(e)|$, where
$\mathcal{P}_1(e)$ is the set of critical paths associated with $e$ and
$\mathcal{P}_2(e)$ is the set of 2-critical paths containing $e$.  We
record two results from F\"uredi \cite{Fu}, which hold for arbitrary graphs
$H$ (not necessarily diameter-2-critical).

\begin{lem} (Derived from Lemmas 2.1 and 3.3 of F\"uredi \cite{Fu}.)
  \label{lem:t-light}
  For any $n$-vertex graph $H$, $e(H)+\di(H)\le n^2/2$, and the number of
  $t$-light 2-critical paths is less than $27t\cdot \RS(n)$ for any
  positive integer $t$.
\end{lem}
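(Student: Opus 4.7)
The plan is to handle the two inequalities separately. For the first inequality, I would set up the identity
\begin{displaymath}
2\bigl(e(H)+\di(H)\bigr) = \sum_{v \in V(H)} \bigl(d(v) + |N_D(v)|\bigr),
\end{displaymath}
where $N_D(v) := \{u \ne v : N(u) \cap N(v) = \emptyset\}$, and show the right-hand side is at most $n^2$. The key structural fact is that whenever $\{u,v\} \in \Di(H)$, the neighborhoods $N(u)$ and $N(v)$ are disjoint subsets of $V(H)$, and if moreover $uv \in E(H)$ then $u \in N(v)$ and $v \in N(u)$, together forcing $d(u)+d(v) \le n$. The trivial bound $|N(v) \cup N_D(v)| \le n-1$ (for non-isolated $v$) handles the bulk of the sum; the residual term $\sum_v |N(v) \cap N_D(v)| = 2|E(H) \cap \Di(H)|$ must then be bounded by an amortized argument exploiting the degree-sum constraint. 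Calibrating this amortization is the main difficulty, and the answer should be tight on the balanced complete bipartite graph $K_{n/2,n/2}$ and the perfect matching, both of which attain $e(H)+\di(H)=n^2/2$.

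For the second inequality, the plan parallels F\"uredi's diameter-2 argument via Ruzsa-Szemer\'edi, mirroring the construction already used in the proof of Lemma \ref{lem:P_t} of this paper. Given a $t$-light 2-critical path $P = xay$, associate to it the 3-edge $\{x, a, y\}$ in an auxiliary 3-uniform hypergraph $H_1$, designating the middle vertex $a$ as its \emph{center}. Since the edge multiplicities satisfy $m(xa), m(ay) < t$, each of the pairs $\{x,a\}$ and $\{a,y\}$ lies in fewer than $t$ 3-edges of $H_1$, while the pair $\{x,y\}$ lies in at most one by uniqueness of the 2-critical path between $x$ and $y$. A greedy linearization then retains more than a $1/(2t-1)$ fraction of the 3-edges; a uniform random 3-coloring of $V(H)$ produces a 3-partite subfamily retaining in expectation a $3!/3^3 = 2/9$ fraction; and fixing which color class holds the center costs a final factor of $1/3$. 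The resulting hypergraph $H_4$ is linear, 3-partite, and 3-uniform, and accounts for more than a $1/(27t)$ fraction of all $t$-light 2-critical paths.

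The main remaining step is to verify that $H_4$ is triangle-free in the hypergraph sense, after which Theorem \ref{thm:RSz} immediately gives $|E(H_4)| \le \RS(n)$ and hence the claimed bound. A putative triangle in $H_4$ consists of three 3-edges on six distinct vertices, pairwise sharing exactly one vertex; decoding back through the corresponding 2-critical paths, I would show that the interlocking pattern forces two distinct paths of length at most $2$ between some pair of endpoints, contradicting the uniqueness implicit in the definition of a 2-critical path. This combinatorial check is directly analogous to the triangle-free verification carried out in the proof of Lemma \ref{lem:P_t}, and is the combinatorial heart of the second part of the proof.
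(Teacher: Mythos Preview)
The paper does not prove this lemma, deferring instead to F\"uredi~\cite{Fu}. Your plan for the second inequality is correct and essentially identical to the argument the paper itself spells out for the analogous Lemma~\ref{lem:P_t}: the greedy linearization, random 3-partition, center-fixing, and triangle-freeness verification all go through, and the triangle-freeness check works for exactly the reason you state (two interlocking 2-critical paths force a second path of length~2 between a 2-critical pair, contradicting uniqueness).

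For the first inequality, your identity and reduction are correct: writing $E_0 = E(H)\cap\Di(H)$ for the edges lying in no triangle and $P_2$ for the non-adjacent pairs with a common neighbor, one has
\[
  e(H)+\di(H) = \binom{n}{2} + |E_0| - P_2,
\]
so the target inequality is equivalent to $|E_0|\le P_2 + n/2$. But the tool you propose to close this---the degree-sum constraint $d(u)+d(v)\le n$ for $uv\in E_0$---does not suffice. In your own tight example $K_{n/2,n/2}$, \emph{every} edge lies in $E_0$ and satisfies $d(u)+d(v)=n$ with equality, so the constraint carries no slack at all; yet $|E_0| = P_2 + n/2$ exactly there, which means any valid argument must extract the bound from finer structural information than degree sums. (Concretely: summing the constraint over $E_0$ yields $\sum_v d(v)\,d_0(v)\le n|E_0|$, which recovers Tur\'an-type information but says nothing about $P_2$.) F\"uredi's actual proof of his Lemma~2.1 uses a different combinatorial mechanism, and the ``amortization'' you defer is the entire content of that lemma rather than a routine step. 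This is a genuine gap in the first half of your proposal.
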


In the rest of the section, let $G$ be a diameter-3-critical graph, and
define $t:=\sqrt{n^2/\RS(n)}$. By the Ruzsa-Szemer\'edi (6,3) Theorem (See Theorem
\ref{thm:RSz}), $t$ tends to infinity as $n\to \infty$.  Let $G_0$ be the
$n$-vertex graph obtained from $G$ by deleting
\begin{itemize}
  \item[(i)] all edges of $G$ whose multiplicity is at least $t$, and
  \item[(ii)] all edges which appear in a $t$-light 2-critical path of $G$.
\end{itemize}
By Lemma \ref{lem:t-light}, at most $54t\cdot \RS(n)$ edges are deleted in
(ii).  To control the number deleted in (i), observe that $\sum m(e) <
3n^2,$ because each critical path is associated with at most 3 edges and
each 2-critical path contains at most 2 edges, and thus it follows from
$\sum |\mathcal{P}_1(e)|\le 3\binom{n}{2}$ and $\sum |\mathcal{P}_2(e)|\le
2\binom{n}{2}$.  So, we delete fewer than $3n^2/t$ edges in (i), producing
\begin{equation}
  \label{ineq:e(G)}
  e(G)\le e(G_0)+\frac{3n^2}{t}+54t\cdot \RS(n),
\end{equation}
and it is clear that, after deleting the edges in (i) and (ii) from $G$,
\begin{align}
\label{prop:destroy2crit}
\text{all 2-critical paths in } G \text{ of length two are destroyed in } G_0.
\end{align}

\begin{lem}
  \label{fact:Di}
  Every critical or 2-critical pair of $G$ is contained in $\Di(G_0)$.
\end{lem}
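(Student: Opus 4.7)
The plan is to perform a case analysis on the length of the shortest $(x,y)$-path in $G$. The common mechanism is that if $\{x,y\}$ has a common neighbor $w$ in $G_0$, then $xwy$ is a length-$2$ path in $G$, and this — combined with the criticality data around $\{x,y\}$ — forces a rigid structure that identifies $w$ uniquely and then eliminates it via property \eqref{prop:destroy2crit}.

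First I would handle 2-critical pairs $\{x,y\}$. By definition, the unique $(x,y)$-path of length at most $2$ in $G$ is either the edge $xy$ itself or a length-$2$ path $xzy$. In the first case, uniqueness forbids any common neighbor of $x,y$ in $G$ (such a neighbor would give a second $(x,y)$-path of length $\le 2$), so $\{x,y\} \in \Di(G_0)$ trivially. In the second case, $xy \notin E(G)$ and $z$ is the unique common neighbor of $x,y$ in $G$; but $xzy$ is itself a length-$2$ 2-critical path, so \eqref{prop:destroy2crit} guarantees that at least one of $xz, zy$ is missing from $G_0$, removing $z$ as a common neighbor there.

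Next I would handle a critical pair $\{x,y\}$ with associated edge $e$ and chosen critical path $P_{xy}$ of length at most $3$. If $|P_{xy}| = 3$, then $d_G(x,y) = 3$ and there are no common neighbors in $G$, hence none in $G_0$. If $|P_{xy}| = 1$, then $e = xy$ (the only edge of $P_{xy}$, and associated edges must lie in $P_{xy}$), and $d_{G-xy}(x,y) \ge 4$ forbids any length-$2$ walk, so again no common neighbors. The substantive case is $|P_{xy}| = 2$: writing $P_{xy} = xzy$, the associated edge $e$ must lie in $\{xz, zy\}$. For any hypothetical common neighbor $w \ne z$ of $x,y$ in $G$, the length-$2$ path $xwy$ would avoid both $xz$ and $zy$ and hence avoid $e$, giving $d_{G-e}(x,y) \le 2$ and contradicting associatedness. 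Thus $z$ is the unique common neighbor of $x,y$ in $G$ and $xzy$ is a length-$2$ 2-critical path, so \eqref{prop:destroy2crit} again removes it in $G_0$.

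I do not expect a real obstacle: the design of $G_0$ — in particular the removal of edges appearing in $t$-light 2-critical paths, which is exactly what is needed to destroy every length-$2$ 2-critical path — is tailored so that the above case analysis goes through directly, and each case reduces either to uniqueness in the definition of a (2-)critical pair or to \eqref{prop:destroy2crit}.
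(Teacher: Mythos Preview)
Your proposal is correct and follows essentially the same approach as the paper's proof. The only difference is cosmetic: where you spell out the cases $|P_{xy}|=1$ and $|P_{xy}|=2$ for critical pairs explicitly, the paper compresses both into the single observation that if $|P_{xy}|\le 2$ then $\{x,y\}$ is also 2-critical (which is exactly what your case analysis establishes), and then defers to the 2-critical case already handled.
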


\begin{proof}
  Consider any 2-critical pair $\{x,y\}$ in $G$. If the 2-critical
  $(x,y)$-path $P$ has length 1, then by definition $\{x,y\}\in \Di(G)$ and
  thus $\{x,y\} \in \Di(G_0)$. If it has length 2, then by
  \eqref{prop:destroy2crit}, at least one edge of $P$ is deleted in $G_0$,
  implying that $\{x,y\}\in \Di(G_0)$.

  Now consider any critical pair $\{x,y\}$. Note that the critical path
  $P_{xy}$ is a shortest $(x,y)$-path. If $|P_{xy}|\le 2$, then $\{x,y\}$
  is also 2-critical. Thus, we may assume that $|P_{xy}|=3$. This shows
  that $N_G(x)\cap N_G(y)=\emptyset$ and thus $\{x,y\}\in \Di(G_0)$.
\end{proof}

Run our algorithm from the previous section on $G$, and let $s$ be the
number of iterations it runs for before it stops.  By Lemma
\ref{fact:P(Ii)}, every path in $P(i) \cup P^2(i)$ is either critical or
2-critical, which also uniquely determines a critical or 2-critical pair of $G$.
Thus, by Lemmas \ref{lem:P(Ii)} and \ref{fact:Di}, we obtain
\begin{equation}
  \label{fact:disj&P(Ii)}
  \di(G_0)\ge \sum_{i} |P(i) \cup P^2(i)|\ge 2s-n/2.
\end{equation}

Let $S(i)$ be the set of edges settled in iteration $i$, which were also
still in $G_0$.  By property \eqref{prop:familyF}, every edge is settled in
some iteration, which shows that $E(G_0)$ is a disjoint union of the sets
$S(i)$.

\begin{lem}
  \label{fact:S(Ii)}
  For all but at most $54t\cdot \RS(n)$ iterations, we have $|S(i)|\le 1$,
  and hence $e(G_0)=\sum |S(i)|\le s+54t\cdot \RS(n)$.
\end{lem}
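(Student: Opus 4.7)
The plan is to show that $|S(i)|\ge 2$ forces us into a very restricted scenario, and to charge each such bad iteration to a distinct element of $\mathcal{P}_t$, whose size is controlled by Lemma~\ref{lem:P_t}.

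First, cases (C2), (C4), (C5), (C6) each settle only one edge per iteration, so $|S(i)|\le 1$ trivially. In case (C3), the two settled edges $e,f$ together form a length-2 2-critical path $e\cup f$; if both survived in $G_0$, we would have $m(e),m(f)<t$, making $e\cup f$ a $t$-light 2-critical path and forcing both edges out in step (ii) -- contradiction. Hence bad iterations can only arise from (C1).

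For (C1), the critical path $P_{xy}$ has length 2 or 3. If $|P_{xy}|=2$, then $P_{xy}$ is itself 2-critical (both edges are associated since there are at least two associated edges in a length-2 path), and the $t$-light argument applied to $P_{xy}$ again yields $|S(i)|\le 1$. So assume $|P_{xy}|=3$ and write $P_{xy}=xaby$. A short argument -- any length-$\le 3$ $(x,y)$-path bypassing a single edge of $P_{xy}$ would need a detour of length at least $2$ in place of that edge, pushing the total length above $3$ -- shows that all three edges of $P_{xy}$ are associated, so $P_{xy}\in\mathcal{P}$ and $P_{xy}$ is the unique length-$\le 3$ $(x,y)$-path. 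This uniqueness makes both subpaths $xab$ and $aby$ 2-critical, since any alternative length-$\le 2$ path between, say, $x$ and $b$ could be concatenated with $by$ to violate uniqueness. Now, if any two \emph{adjacent} edges of $P_{xy}$ survived in $G_0$, their shared 2-critical subpath would be $t$-light, contradicting (ii); thus the only possibility for $|S(i)|\ge 2$ is that the two outer edges $xa$ and $by$ both survive (and the middle edge $ab$ does not). Finally, if $m(ab)<t$, then $xab$ would be $t$-light 2-critical and remove $xa$ via (ii) -- contradiction. Hence $m(ab)\ge t$, which combined with $m(xa),m(by)<t$ shows $P_{xy}\in\mathcal{P}_t$.

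By Lemma~\ref{lem:P(Ii)} the sets $P(i)$ are pairwise disjoint, so distinct bad iterations produce distinct elements of $\mathcal{P}_t$, and Lemma~\ref{lem:P_t} bounds the number of bad iterations by $|\mathcal{P}_t|\le 54t\cdot\RS(n)$. Since $|S(i)|\le 2$ in every bad iteration and $|S(i)|\le 1$ in every good one, we conclude $e(G_0)=\sum_i|S(i)|\le s+(\text{bad count})\le s+54t\cdot\RS(n)$, as required. The main obstacle is the case analysis for $|P_{xy}|=3$, where one must confirm that the surviving pair of edges must be non-adjacent and that the middle edge then has multiplicity at least $t$; both reductions rest on the single principle that any $t$-light 2-critical path destroys all of its edges via step (ii).
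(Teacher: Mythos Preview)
Your proof is correct and follows essentially the same approach as the paper's. The paper is more terse, citing the already-established property \eqref{prop:destroy2crit} (that every length-2 2-critical path loses an edge in $G_0$) rather than re-deriving the $t$-light argument in each case, and it leaves the disjointness of the $P(i)$'s implicit; but the case analysis, the reduction of bad iterations to $\mathcal{P}_t$, and the final count are identical.
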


\begin{proof}
  Consider an arbitrary iteration $i$. If (C2), (C4), (C5) or (C6) occur,
  exactly one edge is settled, giving $|S(i)|\le 1$ trivially.  If (C3)
  occurs, then there are two edges $e$ and $f$ settled such that $e\cup f$
  is a 2-critical path. By \eqref{prop:destroy2crit}, at most one of $e$
  and $f$ remains in $G_0$, and thus it also holds that $|S(i)|\le 1$. It
  remains to consider (C1).  Then there are at least two edges settled in
  this iteration, all of which are associated with a single critical path,
  say $P_{xy}$.  And such $P_{xy}\in \mathcal{P}$ (recall the definitions
  of $\mathcal{P}$ and $\mathcal{P}_t$ at the beginning of Section
  \ref{sec:diam3:Pt}).  If $|P_{xy}|=2$, then such $P_{xy}$ is also
  2-critical, and thus $|S(i)|\le 1$ by the same argument as in (C3).  Only
  $|P_{xy}|=3$ remains. Let $P_{xy}=xaby$.  Note that $xab$ and $aby$ both
  are 2-critical.  By the definition of $G_0$, one can verify that at most
  one edge of $P_{xy}$ can be in $G_0$, unless it is the situation that
  $m(ab)\ge t$, $m(xa)<t$ and $m(by)<t$, that is, $P_{xy}\in
  \mathcal{P}_t$. Thus, $|S(i)|\le 1$ if $P_{xy}\notin \mathcal{P}_t$ and
  $|S(i)| \le 2$ otherwise.  By Lemma \ref{lem:P_t}, we see that
  $|\mathcal{P}_t|\le 54t\cdot \RS(n)$.  This completes the proof.
\end{proof}

We are now ready to prove our final main result, that $G$ has at most
$n^2/6+o(n^2)$ edges.

\begin{proof}[Proof of Theorem \ref{thm:diam3}.]
  Let $G$ be an arbitrary diameter-3-critical graph.  By
  \eqref{fact:disj&P(Ii)} and Lemma \ref{fact:S(Ii)}, we find
  \begin{displaymath}
    \di(G_0)\ge 2e(G_0)-108t\cdot \RS(n)-\frac{n}{2}.
  \end{displaymath}
  Apply Lemma \ref{lem:t-light} to $G_0$, we find
  \begin{displaymath}
    \frac{n^2}{2}
    \ge
    e(G_0)+\di(G_0)
    \ge
    3e(G_0)-108t\cdot \RS(n)-\frac{n}{2},
  \end{displaymath}
  which, together with
  \eqref{ineq:e(G)}, implies that
  \begin{displaymath}
    e(G)
    \le
    \frac{n^2}{6}+\frac{3n^2}{t}+90t\cdot \RS(n)+\frac{n}{6}
    =
    \frac{n^2}{6}+\left(\frac{93n^2}{t}+\frac{n}{6}\right)
    =
    \frac{n^2}{6}+o(n^2),
  \end{displaymath}
  where the equalities follow by the fact that $t=\sqrt{n^2/\RS(n)}\to
  \infty$ as $n\to \infty$.
\end{proof}

\bigskip

\noindent \textbf{Remark.} The proofs in Section 6 actually work for any
diameter-critical graph with diameter at least 3.  The only information
from diameter-3-critical graphs we need in the proof is that for each edge
$e$, the set of 3-critical paths of $e$ is nonempty.  This clearly holds
for all diameter-critical graphs with diameter at least 3. Therefore, for
all such graphs $G$, we have $e(G)\le n^2/6+o(n^2)$ as well.

\end{document}